\theoremstyle{plain}
\newtheorem{thm}{Theorem}[section]
\newtheorem{corollary}[thm]{Corollary}
\newtheorem{remark}[thm]{Remark}
\newtheorem{example}[thm]{Example}
\newtheorem{algorithm}{Algorithm}[section]
\numberwithin{equation}{section}
\numberwithin{algorithm}{section}
\def\bfx{\mathbf{x}}
\def\bfv{\mathbf{v}}
\def\bfc{\mathbf{c}}
\begin{document}

\title{A Bivariate Spline Construction of Orthonormal Polynomials over Polygonal  Domains 
and Its Applications to Quadrature}
\author{ Ming-Jun Lai\footnote{mjlai@uga.edu. This author is associated with Department 
of Mathematics, University of
Georgia, Athens, GA 30602, U.S.A. This author is supported by the Simon Foundation by a 
Collaboration Grant \#864439.}  }
\date{}
\maketitle

\begin{abstract}
We present computational methods for constructing orthogonal/orthonormal polynomials over arbitrary polygonal domains in $\mathbb{R}^2$ 
 using bivariate spline functions. Leveraging a mature MATLAB implementation which generates  
spline spaces of any degree, any smoothness over any triangulation, we have exact polynomial representation over the polygonal domain of interest. 
Two algorithms are developed: one constructs orthonormal polynomials of degree $d>0$ 
 over a polygonal domain, and the other constructs orthonormal polynomials of degree 
$d+1$ in the orthogonal complement of $\mathbb{P}_d$. 
Numerical examples for degrees $d=1--5$
illustrate the structure and zero curves of these polynomials, providing evidence against the 
existence of Gauss quadrature on centrally symmetric domains. In addition, we introduce 
polynomial reduction strategies based on odd- and even-degree orthogonal polynomials, 
reducing the integration to the integration of its residual quadratic or linear polynomials. 
These reductions motivate new quadrature schemes, which we further extend through polynomial 
interpolation to obtain efficient, high-precision quadrature rules for various polygonal domains.
\end{abstract}

\section{Introduction}
We are interested in constructing multivariate orthogonal polynomials over any bounded 
domain $\Omega\subset \mathbb{R}^n, n \ge 2$
which can be approximated by a finitely many triangles in $\mathbb{R}^2$ or by a finitely 
many tetrahedra in  $\mathbb{R}^3$ and etc.. Mainly we know  orthonormal polynomials over 
a bounded  domain play a similar role like the well-known Fourier series over $[-\pi, 
\pi]$ or $[-\pi, \pi]^n$ for $n=2$ or $n\ge 3$  which are extremely useful in practice, 
e.g. cell phone 
communication, internet transmission of images and videos.  Another
significant application of  orthogonal polynomials is Gaussian quadrature based on the 
zeros of Legendre polynomials over $[-1, 1]$ so that the quadrature will be exact for 
polynomials of degree $\le 2d-1$ than the degree $d$ of the polynomials used for 
constructing 
the quadrature.  The usefulness of the quadrature formulas can be seen that 
they have been used  for constructing a projection of function $f$ in a finite dimensional 
space, 
e.g. polynomial space $\mathbb{P}_d$ with $d\ge 1$ and infinite dimensional 
space, e.g. for reproducing kernel Hilbert space. Multivariate version of these studies have been 
actively studied for many decades.   
Although there is a result that no Gaussian quadrature exists 
over centrally symmetric domains (cf. \cite{DX14} and \cite{X21}), 
we can find some replacement as discussed in this paper.    
Another interesting paper is \cite{AM16} which pointed out the connection of 
multivariate orthogonal polynomials to the study of integrable systems.   
   
Although ChaptGPT, Gemini 3,  Deepseek, and etc can help find much more literature on the study of 
multivariate orthogonal polynomials (MOP), 
we shall only explain some works closely related to our study in this paper.   
Let us start with the two monographs written by C. Dunkl and Y. Xu \cite{DX01} and \cite{DX14} 
on explaining how to write down formulas for orthogonal polynomials in multivariate 
settings and their properties.  Many formulas for  orthonormal 
polynomials over different standard domains such as the unit ball in $\mathbb{R}^n, n\ge 
2$, the simplex in $\mathbb{R}^n, n\ge 2$ and  $[-1,1]^n, n\ge 2$ were presented. 
For a general domain, explicit formulas for
orthogonal polynomials were given in terms of the moment functional ${\cal L}$. 
See Theorem 3.2.12 in \cite{DX14}.  We refer to these two monographs for details. 
Next in \cite{OST20}, Olver, Slevinsky, Townsend published a survey paper of 120 pages in 2021 
on fast algorithm of orthogonal polynomials including various versions of orthogonal polynomials 
over a standard  triangle.  In addition, we can find many interesting results, e.g. 
recurrence relations, and many interesting literature, e.g. Gauss quadrature in \cite{OST20}.  
Another recent survey paper  \cite{X21} summarizes several explicit formulas and many 
properties of orthogonal polynomials of several variables. 
Furthermore, see the classic literature, e.g. \cite{HS56},  
\cite{CH87}, \cite{S71}, and \cite{C97} for more formulas for cubature. Additional cubature formulas were 
added into literature, see \cite{DVX09}, \cite{HKA12}, and \cite{P16}, and etc..    

It is known that there are several difficulties in obtaining a clear and clean formula 
of multivariate orthogonal/orthonormal polynomials (MOP). 
For convenience, we shall use MOP in the rest of the paper.  One of them is the 
order of multi-indices to numerate these multivariate polynomials, e.g. the graded 
lexicographic order or  lexicographic order, which lead to two different systems of 
orthogonal polynomials. Another difficulty is a bounded domain in the 2D setting has infinitely 
many types. 
Indeed, when extending the concept of a bounded domain $[a, b]$ in the univariate setting to 
the bivariate  
setting, there are infinitely many different types of bounded domains, e.g. any polygonal domain 
with $n$ sides, $n\ge 3$. Each bounded domain has its own formulas of MOP. 
When we formulate the orthogonal conditions in terms of determinants,  it is difficult 
to find the determinant of matrices of large sizes with variables $x, y, ...$ as 
their entries.  Another difficulty is to compute the moment functional ${\cal L}$ over domains 
of irregular shape as one can use the moment functionals to generate MOP.  
In \cite{MOPS18}, the researchers have implemented some of these formulas in MAPLE.  
In \cite{DX14}, the researchers also studied the zeros of orthogonal polynomials. Many 
interesting phenomena on the common zeros of orthogonal polynomials of degree exactly $d$ 
were found. For example, in the bivariate setting, orthogonal polynomials $P_{d,j} \in 
\mathbb{P}_d, j=0, \cdots, d$ have no common zeros when $d$ is even and have one common 
zero when $d$ is odd over a centrally symmetric domain $\Omega$.  

Based on the difficulties mentioned above,  
MOP over a general polygonal domain, e.g. an L-shaped domain 
in $\mathbb{R}^2$ or a general polyhedron in $\mathbb{R}^3$ are hard to compute. Carrying 
out the computation based on the given formula in \cite{DX14} looks straightforward, but tedious. 
Due to the advances of computer and computational techniques, it is possible to derive 
algorithms to let computer find these 
polynomials for us.  That is, one can compute them by using numerical integrations 
and numerical computation of Gram-Schmidt orthogonalization. 
Then a serious difficulty is the accuracy 
of numerical integrations. 
Also carrying out numerical integration over an arbitrary polygon may not be 
so easy. All these difficulties can be overcome by using multivariate
splines which were summarized in \cite{LS07} and the references therein. 
See also \cite{A2000}, \cite{ALW06}, \cite{L2025}, and etc.. 
Mainly, there is an exact formula for 
inner product of polynomials over any simplex in $\mathbb{R}^n, n\ge 2$. 
We refer to \cite{CL91} and \cite{LS07} and 
hence, the inner product of two polynomials over any polygonal/polyhedral domain can 
be computed exactly and hence, the integration has no computational error up to a 
machine epsilon.   
It is the main purpose of this paper  to present a computational 
method for  finding MOP over any polygonal domains/polyhedral domains. 
Once we have these polynomials, we will discuss how to use them for generating numerical 
quadrature  
over arbitrary polygons.  For convenience, we shall focus on the bivariate setting and leave the 
discussion of trivariate setting in a future publication. 

Indeed,  when a domain $\Omega$ of interest is a polygon, 
we first use a triangulation, i.e.  collection of triangles to decompose the domain into a 
union of triangles which leads to a triangulation $\triangle$.  
Then we can use bivariate/trivariate splines
which are polynomials over each triangle with global smoothness $r$ over $\Omega$. 
When the smoothness order $r$ of these splines is high enough, say the order of the 
smoothness is equal to the degree of splines,  the spline functions are polynomials.  
More precisely,  given a polygonal domain $\Omega$, let $\triangle$ be a triangulation of 
$\Omega$.  We define by 
\begin{equation}
\label{SS}
S^r_d(\triangle) =\{ s\in C^r(\Omega), s|_{T}\in \mathbb{P}_d, T\in \triangle\}
\end{equation}
the spline space of degree $d$ and smoothness $r$ over  $\triangle$, where $d\ge r$. We 
refer to \cite{LS07} for its properties and \cite{ALW06} and 
\cite{S15} for its computation and applications. Traditionally, we used multivariate splines 
of degree $d$ and $r$ with $d>r$ for numerical solutions of PDE's, data interpolating and 
fitting, surface reconstructions, and construction of wavelets/framelets. See, e.g. \cite{GL13}, \cite{S19}, \cite{LL22}, \cite{L2025}, and etc.. 
In this paper, we use $r=d$ and use the spline functions in $S^r_d(\triangle)$ for 
constructing orthogonal polynomials. In this setting, the above spline space 
$S^d_d(\triangle)$ is the space of polynomials of degree $d$. Our MATLAB implementation of
$S^r_d(\triangle)$ gives us an exact representation of polynomials over any polygonal domain 
$\Omega$.

Instead of using a Gram-Schmidt orthogonalization procedure,  
we reformulate the construction problem into a minimization problem and then 
solve it numerically. 
We shall explain the minimization in the next section. For convenience, 
let us first show quadratic orthonormal polynomials over an L-shaped domain with its 
vertices $[0,0;2, 0;2, 1;1, 1;1, 2;0, 2]$ in 
Figure~\ref{quadraticop0}.  

\begin{example}
Consider a polygon $P$ which is an L-shaped domain vertices $[0,0;2,0;2, 1;1, 1;1, 2;0, 2]$.  For $d=2$, 
we find bivariate quadratic orthogonal polynomials. Let us present
their graphs  in Figure~\ref{quadraticop0}.  These coefficients in power format will 
be given in a later section.  
	
\begin{figure}[htpb]
		\includegraphics[width=0.3\linewidth]{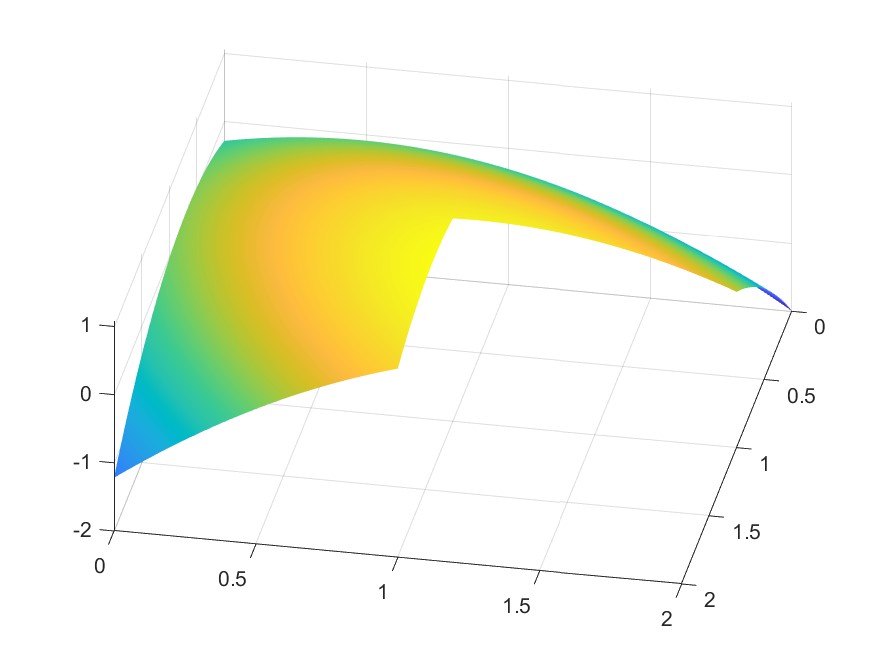}
		\includegraphics[width=0.3\linewidth]{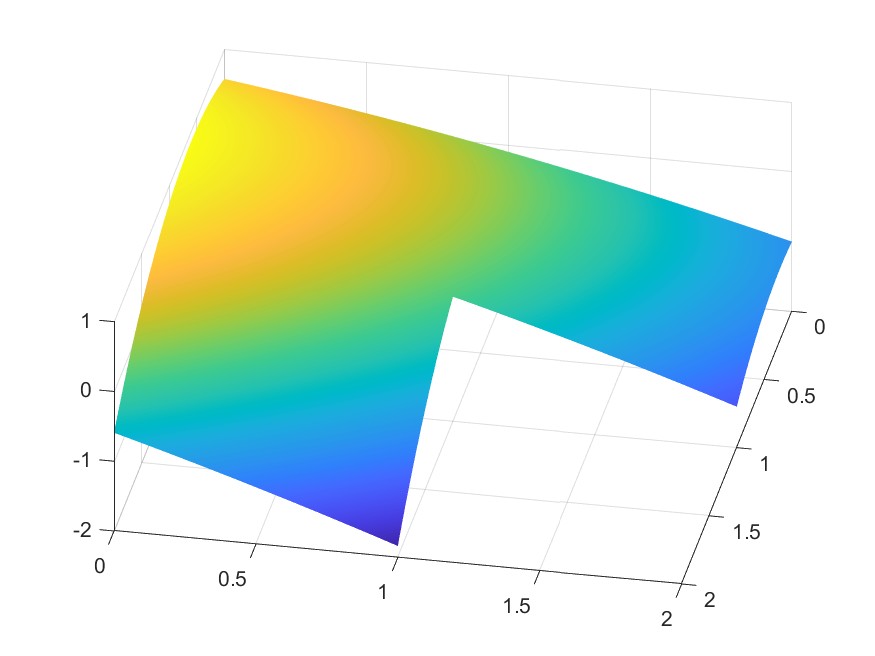}
		\includegraphics[width=0.3\linewidth]{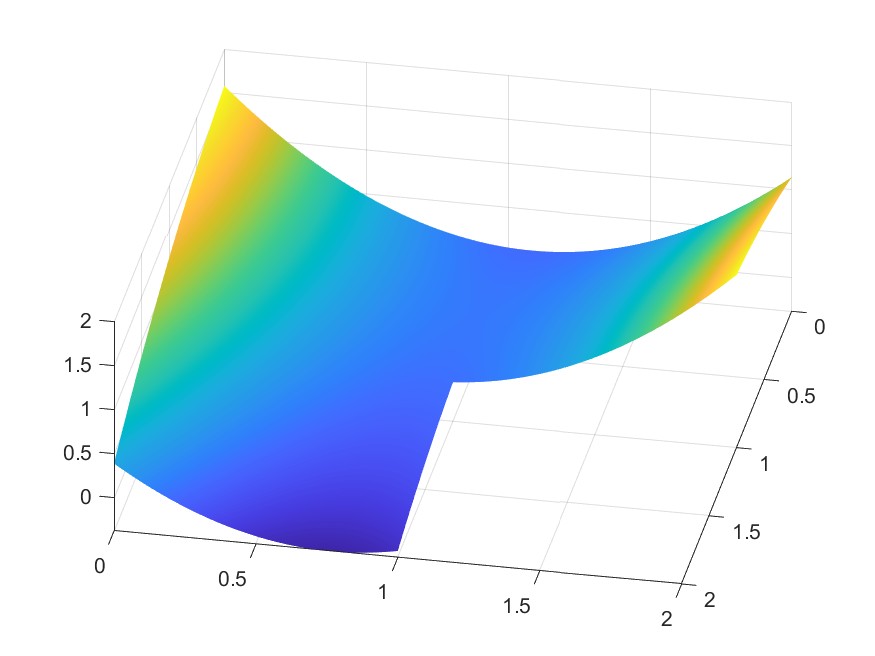}
		\includegraphics[width=0.3\linewidth]{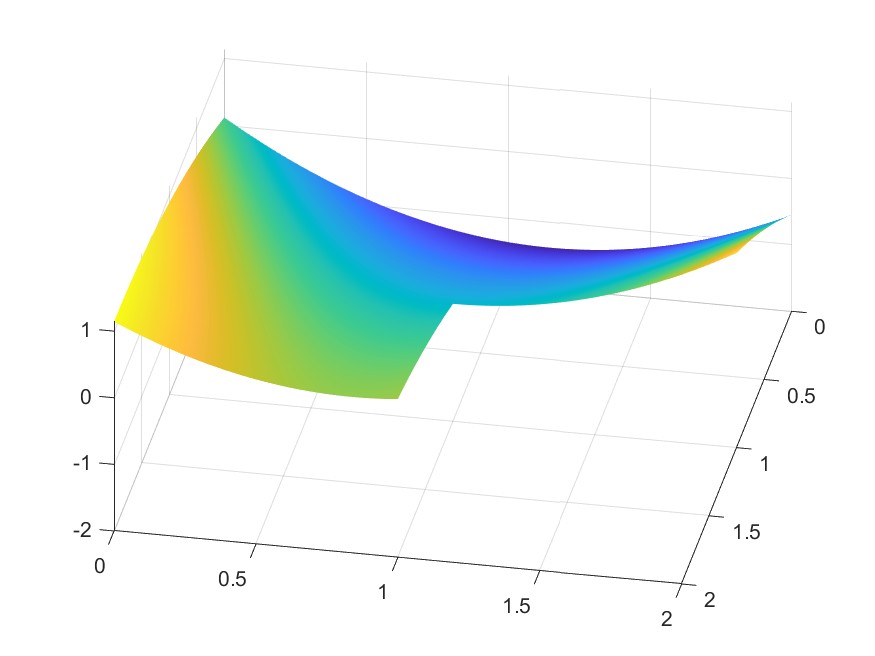}
		\includegraphics[width=0.3\linewidth]{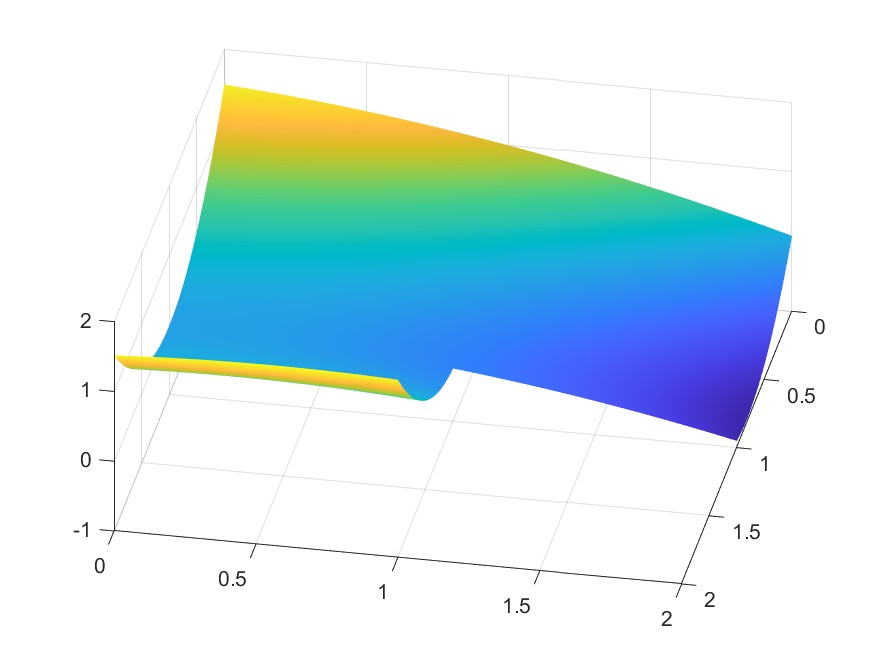}
		\includegraphics[width=0.3\linewidth]{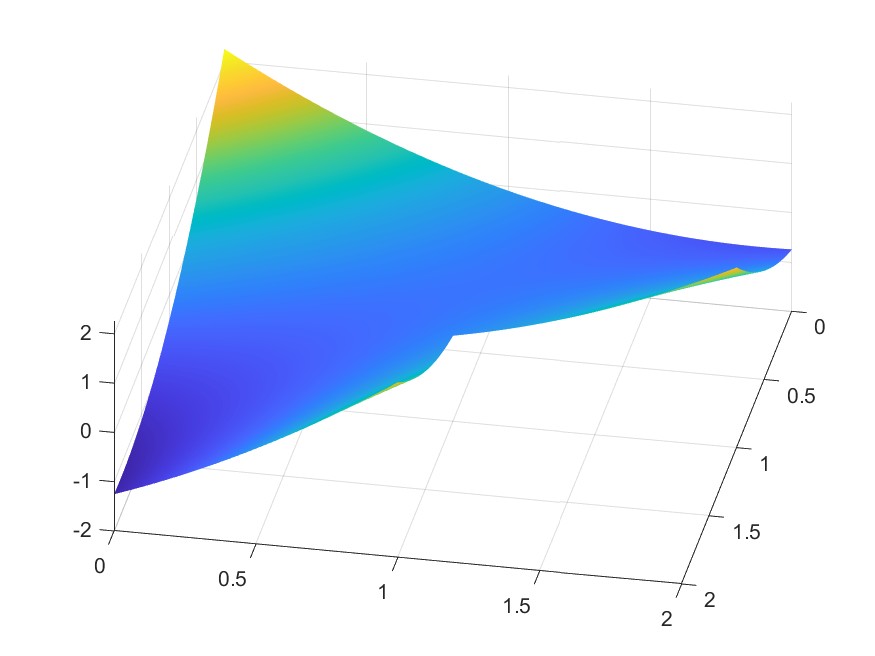}
		\caption{Quadratic Orthonormal Polynomials over L-shaped Domain \label{quadraticop0}}
	\end{figure}
\end{example}
 
 In addition to our computational algorithm, we also add a verification step. 
That is, we always resize the domain of interest
into $[0, 1]^2$ and use the values of the  orthonormal polynomials from our algorithm over
$1001^2$ equally spaced points of $[0, 1]^2$ and inside the given domain $\Omega$ 
to compute
a Riemann sum and check whether the matrix of the inner product products of these polynomials 
is close  to the identity or not.  Many examples will be given and their zeros, 
especially,  common zeros will be shown. 
The associated MATLAB program will be available from the author upon request. 

There are a few other contributions in this paper. One significant result is that   
one point quadrature  is exact for many more functions than linear polynomials. 
It is exact for not only odd functions, but also many even functions over $[-1, 1]$ as well.
Another significant result is a new strategy for numerical integration. 
Indeed, we introduce a polynomial reduction procedure based on orthogonal polynomials of odd degree
(similar for orthogonal polynomials of even degree). Then the integration of a polynomial of any 
degree is  the integration of the residual quadratic polynomial or linear polynomial after the 
polynomial reduction procedure.  See Theorems~\ref{mainresult1} and \ref{mainresult2} for details.
These lead to a brand-new integration approach: computing the integrals of linear polynomials or
quadratic polynomials by a brute force method beforehand once for all, the integration of any 
polynomial is done by doing a polynomial reduction by either orthogonal polynomials of even degree or odd degree.  We further use the
polynomial interpolation to obtain some numerical quadrature formulas which has a higher polynomial
precision using a less number of function values.

The paper is organized as follows. We first revisit the construction of Gaussian quadrature and 
the well-known one point formula in \S 2. Then we use the zeros of Legendre polynomials of odd
degree to give an explanation for the one point formula and 
show that the one point formula will be exact for all odd 
functions and many even functions. Next we explain a construction to obtain Gauss type
quadrature based on Legendre polynomials of odd degree. 
This motivates us to construct orthogonal polynomials in the 
multivariate settings.  In \S 3,   we explain how to  compute an orthogonal basis for 
polynomial space $\mathbb{P}_d$ for $d\ge 1$ by using bivariate splines. Two computational 
procedures will be outlined, one is to compute an orthonormal basis for $\mathbb{P}_d$ 
and the other one is 
to construct an additional basis of orthonormal polynomials in $\mathbb{P}_{d+1} \ominus \mathbb{P}_d$.    
Many numerical examples will be given, including the zeros of these orthogonal polynomials.  We 
numerically verify a classic result of orthogonal polynomials in $\mathbb{R}^2$ that there is no 
common zeros when degree $d$ is even and there is one common zeros when the degree $d$ is odd as
proved in \cite{DX14} when the domain is centrally symmetric. Furthermore, in \S 4 we explain a 
polynomial reduction approach based on orthogonal polynomials of odd degree $d\ge 3$ with 
the residual term $R$ which is a 
quadratic polynomial. Then the  
integration of a polynomial function of any degree is equal to the integration of its quadratic residual $R$ which leads to a one-point formula when the residual $R$ is a linear polynomial. 
In the same fashion, we can reduce a polynomial $p$ by using orthogonal polynomials of even 
degree with a residual linear polynomial.  The the integration of $p$ is the same as the integration of the residual linear polynomial. Since we can pre-calculate the integration of 
$x, y, 1$ over the domain of interest, the integration of $p$ is the coefficients of the residual linear polynomial with the pre-calculated values.  
In addition, we explain a polynomial interpolation method to obtain a quadrature based 
on function values. Furthermore, the study leads to some quadrature with a higher polynomial 
precision by using a less number of function values over triangular/rectangular/hexagonal 
domains.  Finally, in \S 5 we end the paper
with several remarks on the generalizations and extensions of the construction in this  paper. 
In particular,  the computation in \S 3 and new integration methods in \S 4 
can be extended into the 3D settings and the details are left to a future publication. 
Also, the construction of orthogonal polynomials over $\Omega$ 
in  this paper can be generalized to the construction of orthogonal spline functions over 
$S^r_d(\triangle)$ for  any $r<d$.

\section{Preliminary and Motivations}
Let us recall the ideas for Gauss quadrature over $[-1, 1]$ first. 
It uses Legendre orthogonal polynomials $P_n$ of degree $\le n$ with leading coefficient $1$ for 
$n=0, 1, 2, \cdots, $. These polynomials $P_n$'s have some nice properties: they satisfy a 
simple recurrence relation, all the zeros of $P_n$ are inside $[-1, 1]$ and all polynomial
$P_{2n-1}$ of odd degree has a zero at $0$.
Thus, for any polynomial $P$ of degree $2n-1$,  we do a long division to get
\begin{equation}
\label{Gauss}
 P (x) = q(x) P_{n}(x) + R(x),
 \end{equation}
 where $q(x)$ is the quotient and $R(x)$ is the residual with degree $\le n-1$.  Due to the 
orthogonality, we have
 \begin{equation*}
 \int_{-1}^1 P(x)dx = \int_{-1}^1  q(x) P_{n}(x) dx + \int_{-1}^1 R(x)dx = \int_{-1}^1 R(x)dx.
 \end{equation*}
 If we use the zeros $x_1, \cdots, x_n$ of the polynomial $P_n$ as interpolation locations, 
$R(x)$ can be written in terms of Lagrange basis function, i.e. 
 \begin{equation*}
  R(x) = \sum_{j=1}^n R(x_j) \prod_{i=1\atop i\not=j}^n \frac{(x-x_i)}{(x_j-x_i)}.
  \end{equation*}
  Note that $R(x_j) = P(x_j)$ due to the zeros of $P_n$.  Then integration 
  \begin{equation*}
  \int_{-1}^1 R(x)dx = \sum_{j=1}^n P(x_j) \int_{-1}^1 \prod_{i=1\atop i\not=j}^n \frac{(x-x_i)}{(x_j-x_i)}dx.
  \end{equation*}
  By choosing the weights $c_j = \int_{-1}^1 \prod_{i=1\atop i\not=j}^n \frac{(x-x_i)}{(x_j-x_i)}dx$, the quadrature 
  \begin{equation*}
  \int_{-1}^1 P(x)dx = \sum_{j=1}^n P(x_j) c_j
  \end{equation*}
is exact for all polynomials of degree $2n-1$.  We now iteratively apply the ideas above to 
write any polynomial $P$ of any degree $n\ge 3$, say $2d-1\le n \le 2d$,   
\begin{equation}
\label{laisidea}
P(x) = q_1(x) P_{2d-1}(x) + q_2(x) P_{2d-3}(x) + \cdots + q_d(x) P_1(x) +R(x)  
\end{equation}
by using Legendre polynomials of odd degree, 
where $q_i(x)$ are linear polynomials and so is the residual polynomial $R(x)$. 
By using the orthogonal properties of Legendre polynomials, it follows that 
\begin{equation}
\label{laisidea2}
\int_{-1}^1 P(x)dx = \sum_{j=1}^d \int_{-1}^1q_j(x) P_{2(d-j)+1}(x)dx + 
\int_{-1}^1R(x)dx = \int_{-1}^1 q_d(x) P_1(x)dx + \int_{-1}^1 R(x)dx.
\end{equation}
If $q_d(x)\equiv c$, i.e. $q_d(x)$ is a constant function,  we will have $\int_{-1}^1 
P(x)dx  = \int_{-1}^1R(x)dx.$ Recall that 
$0$ is the common zero of all Legendre polynomials of odd degree. So $R(0)=P(0)$ and 
$\int_{-1}^1 R(x)dx= 2P(0)$ and hence, $\int_{-1}^1P(x)dx= 2 P(0)$ which is the one point 
quadrature. We write this case as in the following theorem. 
\begin{thm}
\label{onepointquadraure}
For any polynomial $P$ of degree $n\ge 1$, write $P$ in the format of (\ref{laisidea}). 
If the linear polynomial $q_d$ is a constant, then 
\begin{equation}
\label{mainresult0}
\int_{-1}^1P(x)dx= 2 P(0).
\end{equation}
\end{thm}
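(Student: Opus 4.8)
The plan is to work directly from the reduction already recorded in (\ref{laisidea2}), so that the argument becomes a short chain of observations rather than a fresh computation. The starting point is that, for $P$ written in the form (\ref{laisidea}), equation (\ref{laisidea2}) has already collapsed $\int_{-1}^1 P\,dx$ down to the two surviving terms $\int_{-1}^1 q_d(x)P_1(x)\,dx + \int_{-1}^1 R(x)\,dx$, every higher-order term having vanished by the orthogonality of the $P_{2(d-j)+1}$. My first task is therefore to dispose of the first of these two integrals under the theorem's hypothesis.

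First I would invoke the hypothesis that $q_d \equiv c$ is constant together with the fact that $P_1(x) = x$. Then $\int_{-1}^1 q_d(x)P_1(x)\,dx = c\int_{-1}^1 x\,dx = 0$, since $x$ is odd and the interval is symmetric about the origin. This leaves the clean identity $\int_{-1}^1 P(x)\,dx = \int_{-1}^1 R(x)\,dx$, reducing the whole problem to integrating the residual $R$.

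The key step, and the genuine engine of the result, is the pointwise identity $R(0) = P(0)$. I would obtain it by evaluating the decomposition (\ref{laisidea}) at $x=0$ and using the property noted at the start of \S 2, namely that $0$ is a common zero of every odd-degree Legendre polynomial, so that $P_{2d-1}(0) = P_{2d-3}(0) = \cdots = P_1(0) = 0$. Each product term in (\ref{laisidea}) then drops out at $x=0$, leaving $P(0) = R(0)$.

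The final step is elementary once we use that $R$ has degree at most $1$: writing $R(x) = \alpha x + \beta$, the same oddness argument kills the $\alpha x$ contribution, so $\int_{-1}^1 R(x)\,dx = 2\beta = 2R(0) = 2P(0)$, which is (\ref{mainresult0}). The one point that needs a moment's care — the closest thing to an obstacle in an otherwise immediate proof — is confirming that the format (\ref{laisidea}) really does force $\deg R \le 1$, so that $\int_{-1}^1 R = 2R(0)$ holds exactly; this is guaranteed by the degree bookkeeping of the successive long divisions (the final remainder after dividing through $P_1$ has degree below $\deg P_1 = 1$), and in any event the theorem takes the decomposition as given.
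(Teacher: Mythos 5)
Your proposal is correct and follows essentially the same route as the paper's own proof: both rely on (\ref{laisidea2}) to discard the higher odd-degree terms, on $P_1(0)=0$ and the common zero of the odd Legendre polynomials at the origin to get $R(0)=P(0)$, and on the identity $\int_{-1}^1 \ell(x)\,dx = 2\ell(0)$ for linear $\ell$. The only cosmetic difference is that the paper absorbs $q_d P_1$ and $R$ into a single linear polynomial $\widetilde{R}$ before integrating, whereas you kill the $q_dP_1$ integral separately by oddness; the two computations are interchangeable.
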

\begin{proof}
When $q_d= c$, we can combine $q_d(x) P_1(x)$ and $R(x)$ together as a new linear 
polynomial $\widetilde{R}$. The two integrals on the 
right-hand side of (\ref{laisidea2}) are $\int_{-1}^1 \widetilde{R}(x)dx = 
2\widetilde{R}(0)$.   Note that 
$P_1(0)=0$ and hence, we still have $\widetilde{R}(0)=R(0)=P(0)$. This completes the 
proof.  
\end{proof}

The quadrature formula in (\ref{mainresult0}) is a well-known one point quadrature 
formula. It is known that the quadrature is exact for linear polynomials. 
Now we can see that it is exact for a lot of more functions. Let us define the following space 
\begin{equation}
\label{laispace}
\mathcal{L}([-1,1])= \hbox{span}\{ \sum_{j=2}^d q_j(x) P_{2j-1}(x) +\ell(x) : 
\hbox{ all } q_j \hbox{ and } \ell(x) \hbox{ are linear polynomials }, d\ge 2\}.
\end{equation}
It is easy to see that the one point quadrature is exact for all functions in 
$\mathcal{L}([-1, 1])$.  
Note that $P_{2j-1}, j\ge 1$ are odd functions and clearly belong to $\mathcal{L}$. Since 
any odd continuous functions 
$f$ can be approximated by odd degree Legendre polynomials, the one point quadrature 
formula is exact for all odd functions. If we choose $q_j(x)=c_jx$ for all $j\ge 2$ and 
$\ell(x)=c_0$, 
$f(x) =\sum_{j=2}^d x P_{2j-1}(x)+\ell$ is an even function which is also in 
$\mathcal{L}([-1,1])$.    Thus, $\mathcal{L}[-1,1]$ in (\ref{laispace}) 
is a very large space. It is 
clear that $\mathcal{L}[-1,1]$ is not the whole space
$C([-1,1])$ since $x^2\not\in \mathcal{L}([-1,1])$ as the one point quadrature  does not 
work.   It is natural to add all quadratic polynomials into $\mathcal{L}[-1,1]$ and define
\begin{equation}
\label{Laispace2}
\mathbb{L}[-1,1]= \hbox{span}\{ \mathcal{L}[-1,1], \mathbb{P}_2\},
\end{equation}
where $\mathbb{P}_2$ is the space of all quadratic polynomials. We are now ready to establish 
 another main result in this section.
\begin{thm}
\label{mjlai12233035}
The closure of the space $\mathbb{L}[-1,1]$ in (\ref{Laispace2}) 
under the maximal norm is $C[-1,1]$. 
\end{thm}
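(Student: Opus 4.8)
The plan is to bypass any direct approximation of arbitrary continuous functions and instead prove the sharper algebraic fact that $\mathbb{L}[-1,1]$ already contains \emph{every} polynomial. Once this is shown, the theorem follows at once from the classical Weierstrass approximation theorem, which guarantees that the polynomials are dense in $C[-1,1]$ under the maximum norm: a space whose closure contains a dense set has closure equal to the whole space. Thus the entire content reduces to verifying the inclusion $\mathbb{P}_n \subseteq \mathbb{L}[-1,1]$ for every $n \ge 0$.

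To establish this inclusion I would invoke the decomposition \eqref{laisidea} already derived above. For any polynomial $P$ of degree $n$ with $2d-1 \le n \le 2d$, iterated long division by the odd Legendre polynomials yields
\[
P(x) = q_1(x)P_{2d-1}(x) + q_2(x)P_{2d-3}(x) + \cdots + q_d(x)P_1(x) + R(x),
\]
in which each quotient $q_j$ and the residual $R$ have degree at most one. The key step is simply to sort the terms on the right-hand side according to the degree of their Legendre factor, and to assign each to the appropriate summand in the definition \eqref{Laispace2} of $\mathbb{L}[-1,1]$.

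Every term $q_j(x)P_{2(d-j)+1}(x)$ whose Legendre factor has degree $\ge 3$ is, by definition, an element of $\mathcal{L}[-1,1]$: one recovers it from \eqref{laispace} by keeping that single linear quotient and setting all remaining quotients and $\ell$ to zero. The linear residual $R$ likewise lies in $\mathcal{L}[-1,1]$ as the term $\ell$. The only remaining summand is the one carrying the factor $P_1(x)=x$, namely $q_d(x)P_1(x)$; since $q_d$ is linear, this product is a quadratic polynomial and therefore belongs to $\mathbb{P}_2$. Consequently $P$ is the sum of an element of $\mathcal{L}[-1,1]$ and an element of $\mathbb{P}_2$, so $P \in \text{span}\{\mathcal{L}[-1,1],\mathbb{P}_2\} = \mathbb{L}[-1,1]$. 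This proves $\mathbb{P}_n \subseteq \mathbb{L}[-1,1]$ for all $n$, and combining with Weierstrass density gives $\overline{\mathbb{L}[-1,1]} = C[-1,1]$.

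I do not expect a genuine analytic obstacle: the heavy lifting is carried out entirely by the decomposition \eqref{laisidea} and by the Weierstrass theorem. The one point that requires care is the bookkeeping of the Legendre degrees, so that the single term $q_dP_1$ is correctly isolated and assigned to $\mathbb{P}_2$ while all higher-degree products fall into $\mathcal{L}[-1,1]$. This is exactly the reason why $\mathbb{P}_2$, rather than merely the linear polynomials $\mathbb{P}_1$, must be adjoined to $\mathcal{L}[-1,1]$ in \eqref{Laispace2}: the quadratic obstruction $q_dP_1$ cannot be absorbed into $\mathcal{L}[-1,1]$, which omits the factor $P_1$, and so it must be supplied separately by $\mathbb{P}_2$.
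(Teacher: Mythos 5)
Your proof is correct and follows essentially the same route as the paper: reduce the theorem to the algebraic claim that every polynomial lies in $\mathbb{L}[-1,1]$, and then invoke the Weierstrass theorem. In fact your bookkeeping via the decomposition (\ref{laisidea}) is tighter than the paper's own argument, which asserts that all polynomials of even degree $\ge 4$ already lie in $\mathcal{L}[-1,1]$ --- this fails, e.g., for $x^4 = xP_3(x) + \tfrac{3}{5}x^2$, whose $x^2$ defect is not in $\mathcal{L}[-1,1]$ and must be absorbed by $\mathbb{P}_2$, exactly as you do by isolating the quadratic term $q_d(x)P_1(x)$.
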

\begin{proof}
As we have already seen that all polynomials of odd degree are in $\mathcal{L}[-1,1]\subset 
\mathbb{L}[-1,1]$. Also, we know all polynomimals of even degree $d\ge 4$ are in $\mathcal{L}[-1,1]$
and hence, in $\mathbb{L}[-1,1]$. Now $\mathbb{P}_2\subset \mathbb{L}[-1,1]$. Then the closure 
of the linear span of all polynomials of any degree is $C[-1, 1]$ by the well-known Weierstrass theorem
and hence, the closure of $\mathbb{L}[-1,1]$ is $C[-1, 1]$. 
\end{proof}

The result above tells us that the integration of any continuous function $f$ can be done by first  
approximate $f$  by using the functions in $\mathbb{L}[-1,1]$ within a given tolerance $\epsilon$, 
say $f \approx f_1+ f_2$ with $f_1\in \mathbb{L}[-1,1]$ and $f_2\in \mathbb{P}_2$,  
and then the integration is $\int_{-1}^1 f_2(x)dx$ which is trivial. It follows 
\begin{corollary}
Let $f\in C[-1, 1]$ be a given continuous function. For any given tolerance $\epsilon>0$, there 
exists an approximation $f_1+f_2$ with $f_1\in \mathbb{L}[-1,1]$ and $f_2\in \mathbb{P}_2$ such 
that $\|f- f_1-f_2\|_\infty\le \epsilon$. Then 
$$
\int_{-1}^1 f(x)dx = \int_{-1}^1 f_2(x)dx.
$$
If $f_2(x)= ax^2+ bx+ c$ then $\int_{-1}^1 f(x)dx= 2a/3+2c$. 
\end{corollary}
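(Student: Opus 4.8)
The plan is to obtain the corollary as a short consequence of Theorem~\ref{mjlai12233035} and the one point quadrature of Theorem~\ref{onepointquadraure}, the only real work being a normalization of the decomposition $g=f_1+f_2$. First I would invoke Theorem~\ref{mjlai12233035}: since the closure of $\mathbb{L}[-1,1]$ under $\|\cdot\|_\infty$ is all of $C[-1,1]$, for the given $f$ and $\epsilon>0$ there is a $g\in\mathbb{L}[-1,1]$ with $\|f-g\|_\infty\le\epsilon$. By the definition (\ref{Laispace2}) of $\mathbb{L}[-1,1]$ as the span of $\mathcal{L}[-1,1]$ and $\mathbb{P}_2$, and since both are linear spaces, this $g$ splits as $g=f_1+f_2$ with $f_1\in\mathcal{L}[-1,1]$ and $f_2\in\mathbb{P}_2$. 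This already furnishes the claimed approximant, so the content of the corollary reduces to evaluating $\int_{-1}^1 g\,dx$.

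The key step is the identity $\int_{-1}^1 f_1\,dx=0$. For a general $f_1\in\mathcal{L}[-1,1]$ the one point formula only gives $\int_{-1}^1 f_1\,dx=2f_1(0)$, which need not vanish, so I would first normalize the splitting. Writing $f_1=\sum_{j=2}^d q_j P_{2j-1}+\ell$ with $\ell$ linear, the number $f_1(0)=\ell(0)$ is a constant; I move it out of $f_1$ and into $f_2$, which is legitimate because the constants lie in both $\mathcal{L}[-1,1]$ and $\mathbb{P}_2$, so the transfer keeps $f_1\in\mathcal{L}[-1,1]$ and $f_2\in\mathbb{P}_2$. After relabelling, $f_1+f_2=g$ is unchanged and now $f_1(0)=0$. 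Since the one point quadrature is exact on all of $\mathcal{L}[-1,1]$ (as noted right after (\ref{laispace})), we obtain $\int_{-1}^1 f_1\,dx=2f_1(0)=0$. Equivalently one may argue directly: each $q_j P_{2j-1}$ integrates to $0$ by orthogonality since $\deg q_j\le 1<2j-1$, and $\int_{-1}^1\ell\,dx=0$ because, with $\ell(0)=0$, $\ell$ is odd.

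It then remains to assemble and compute. From $\int_{-1}^1 f_1\,dx=0$ we get $\int_{-1}^1 g\,dx=\int_{-1}^1 f_2\,dx$, and for $f_2(x)=ax^2+bx+c$ a one line integration gives $\int_{-1}^1 f_2\,dx=2a/3+2c$, the odd term $bx$ dropping out. Finally $\left|\int_{-1}^1 f\,dx-\int_{-1}^1 f_2\,dx\right|=\left|\int_{-1}^1(f-g)\,dx\right|\le 2\|f-g\|_\infty\le 2\epsilon$, so the asserted equality $\int_{-1}^1 f\,dx=\int_{-1}^1 f_2\,dx$ is to be read within the prescribed tolerance, with error at most $2\epsilon$.

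The main obstacle --- indeed essentially the only subtle point --- is precisely this normalization, and I would make the bookkeeping explicit. A careless splitting can leave a nonzero constant inside $f_1$, whereupon $\int_{-1}^1 f_1=2f_1(0)\ne0$ and the clean identity breaks. The instructive case is $f(x)=x^4$: one has $x^4\notin\mathcal{L}[-1,1]$, so its quadratic part must be peeled off into $f_2$; taking $f_1=x^4-\tfrac{3}{5}x^2\in\mathcal{L}[-1,1]$ (namely $x\,P_3(x)$) and $f_2=\tfrac{3}{5}x^2$ gives $f_1(0)=0$ and $\int_{-1}^1 f_1=0$, and the formula returns $\int_{-1}^1 x^4\,dx=\tfrac{2}{3}\cdot\tfrac{3}{5}=\tfrac{2}{5}$, as it must.
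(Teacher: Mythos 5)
Your proposal is correct, and it actually supplies more than the paper does: the paper states this corollary with no proof at all (it is presented as an immediate consequence of Theorem~\ref{mjlai12233035}, with the reasoning only sketched in the preceding sentence). Your argument follows the intended route --- approximate $f$ by $g=f_1+f_2$ using the density result, then reduce $\int_{-1}^1 g$ to $\int_{-1}^1 f_2$ --- but you correctly identify and repair two points the paper glosses over. First, a generic $f_1\in\mathcal{L}[-1,1]$ carries a linear term $\ell$ with $\int_{-1}^1\ell\,dx=2\ell(0)$, which need not vanish, so the clean identity $\int_{-1}^1 f_1\,dx=0$ requires your normalization of shifting the constant $\ell(0)$ into $f_2$; since constants lie in both $\mathcal{L}[-1,1]$ and $\mathbb{P}_2$, this is harmless, and afterwards orthogonality (for the $q_jP_{2j-1}$ terms, $\deg q_j\le 1<2j-1$) plus oddness of the remaining $\alpha x$ give $\int_{-1}^1 f_1\,dx=0$. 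Second, you rightly observe that the displayed equality $\int_{-1}^1 f\,dx=\int_{-1}^1 f_2\,dx$ cannot be literally exact for a general continuous $f$ and must be read with error at most $2\|f-g\|_\infty\le 2\epsilon$; the paper asserts exact equality, which is an overstatement. Your worked example with $x^4=xP_3(x)+\tfrac{3}{5}x^2$ is a good sanity check. No gaps.
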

In the same fashion as above, we can use all Gauss-Legendre orthogonal polynomials of even degree to form 
the space 
\begin{equation}
\label{Laispace3}
\mathcal{L}_e[-1, 1]=\{ \sum_{k=1}^d c_k(x) P_{2k}(x) + \ell(x), 
\hbox{ all } c_k(x) \hbox{ and } \ell(x) \hbox{ are linear polynomials }, d\ge 1\}.
\end{equation}
By using the same arguments above, we have
\begin{thm}
\label{mjlai12232025}
The closure of $\mathcal{L}_e[-1,1]$ under the maximal norm is $C[-1,1]$. 
\end{thm}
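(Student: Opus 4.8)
The plan is to show that every polynomial belongs to $\mathcal{L}_e[-1,1]$ and then invoke the Weierstrass approximation theorem, exactly in the spirit of the proof of Theorem~\ref{mjlai12233035}. Since $\mathcal{L}_e[-1,1]$ is a linear space, it suffices to prove $\mathbb{P}_n\subset \mathcal{L}_e[-1,1]$ for every $n\ge 0$; once the polynomials are contained in $\mathcal{L}_e[-1,1]$, the closure of the latter contains the closure of all polynomials, which is all of $C[-1,1]$.

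First I would split every generator into its even and odd parts. Writing the linear coefficients as $c_k(x)=a_k+b_k x$ and $\ell(x)=\alpha+\beta x$, a typical element of $\mathcal{L}_e[-1,1]$ decomposes as $\big(\alpha+\sum_k a_k P_{2k}(x)\big)+\big(\beta x+\sum_k b_k\,xP_{2k}(x)\big)$, where the first bracket is even and the second is odd, since each $P_{2k}$ is even and hence $xP_{2k}$ is odd. Thus the even part of $\mathcal{L}_e[-1,1]$ is the linear span of $\{1,P_2,P_4,\dots\}$ and its odd part is the linear span of $\{x,\,xP_2,\,xP_4,\dots\}$.

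Next I would verify that each of these two families spans all polynomials of the appropriate parity. The even family $\{1,P_2,P_4,\dots\}$ consists of Legendre polynomials of degrees $0,2,4,\dots$ with distinct leading degrees, hence it is a basis for the space of even polynomials. Likewise $\{x,xP_2,xP_4,\dots\}$ has members of degrees $1,3,5,\dots$ with distinct leading degrees, so by the usual triangular (leading-term) argument it is a basis for the space of odd polynomials. Combining the two parities gives $\mathbb{P}_n\subset\mathcal{L}_e[-1,1]$ for every $n$, and Weierstrass then finishes the proof.

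I expect no serious obstacle here; the only point requiring care is the bookkeeping of degrees. It is worth emphasizing where this argument differs from Theorem~\ref{mjlai12233035}: in the odd-degree space $\mathcal{L}[-1,1]$ the sum starts at $P_3$, so its even part is spanned by $1$ together with $xP_3,xP_5,\dots$ (degrees $0,4,6,\dots$), leaving a gap at degree $2$ and forcing the extra space $\mathbb{P}_2$. Here the sum starts at $P_2$, so the even degree $2$ and the odd degree $1$ are both already present, no gap arises, and $\mathcal{L}_e[-1,1]$ alone is dense without any augmentation.
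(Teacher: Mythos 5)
Your proof is correct and takes essentially the same route as the paper: both arguments reduce to showing that every polynomial lies in $\mathcal{L}_e[-1,1]$ and then invoking Weierstrass, with your ``triangular leading-term'' claim for $\{x, xP_2, xP_4,\dots\}$ being exactly the paper's iterative subtraction $f\mapsto f-a_d\,xP_{2d}$ for odd polynomials. Your parity decomposition of the generators and the closing comparison with $\mathcal{L}[-1,1]$ (explaining why no $\mathbb{P}_2$ augmentation is needed here) are a cleaner packaging of the same idea.
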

\begin{proof}
Clearly, all polynomials of even degree are in $\mathcal{L}_e[-1,1]$ in (\ref{Laispace3}) 
as all the linear polynomials
$c_k(x)$ and $\ell(x)$ are constants.  For any polynomial $f$ of odd degree, say 
$$
f(x)= \sum_{k=0}^d a_k x^{2k+1}, 
$$
we can write $f_1(x)= f(x)- a_dx P_{2d}(x)$ which is a polynomial of odd degree $2d-1$ with 
leading 
coefficient $\tilde{a}_{d-1}$, where $P_{2d}$ is the Gauss-Legendre polynomial of degree $2d$ with 
leading coefficient $1$. We refer to \cite{L78} for MATLAB code to compute these polynomials. Then
we let $f_2(x)= f_1(x)- \tilde{a}_{d-1} xP_{2d-2}(x)$ which is a polynomial of odd degree $2d-3$. 
We repeat the iteration until the remainder $f_{d}$ is a linear polynomial. This shows that all 
polynomials of odd degree are in  $\mathcal{L}_e[-1,1]$. Hence, the closure of 
$\mathcal{L}_e[-1,1]$ in the maximal norm is $C[-1, 1]$.   
\end{proof}
It follows 
\begin{corollary}
Let $f\in C[-1, 1]$ be a given continuous function. For any given tolerance $\epsilon>0$, there 
exists an approximation  $f_1\in \mathbb{L}_e[-1,1]$ with a linear term $\ell_{f,\epsilon}$ such 
that $\|f- f_1\|_\infty\le \epsilon$. Then 
$$
\int_{-1}^1 f(x)dx = \int_{-1}^1 \ell_{f,\epsilon}(x)dx.
$$
If $\ell_{f,\epsilon}(x)= bx+ c$, then $\int_{-1}^1 f(x)dx=2c$. 
\end{corollary}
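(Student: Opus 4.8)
The plan is to derive the corollary directly from the density statement of Theorem~\ref{mjlai12232025} together with the single orthogonality identity that each even-degree Legendre polynomial is orthogonal to every polynomial of strictly lower degree.

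First I would invoke Theorem~\ref{mjlai12232025}: since the closure of $\mathcal{L}_e[-1,1]$ in the maximum norm is all of $C[-1,1]$, for the given $f$ and $\epsilon>0$ there is a function $f_1\in\mathcal{L}_e[-1,1]$ with $\|f-f_1\|_\infty\le\epsilon$. By the defining form \eqref{Laispace3}, $f_1$ carries a canonical decomposition
\begin{equation*}
f_1(x)=\sum_{k=1}^d c_k(x)P_{2k}(x)+\ell_{f,\epsilon}(x),
\end{equation*}
in which every $c_k$ and the term $\ell_{f,\epsilon}$ are linear polynomials.

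The second step is the orthogonality computation, which is the only real content. For each $k\ge 1$ the factor $c_k$ lies in $\mathbb{P}_1$, while $P_{2k}$ has degree $2k\ge 2$; by the orthogonality of the Legendre polynomials, $P_{2k}$ is perpendicular to every polynomial of degree $<2k$, so $\int_{-1}^1 c_k(x)P_{2k}(x)\,dx=0$. Summing over $k$ gives
\begin{equation*}
\int_{-1}^1 f_1(x)\,dx=\int_{-1}^1 \ell_{f,\epsilon}(x)\,dx,
\end{equation*}
which is the asserted identity with $f$ replaced by its approximant $f_1$. For the closing formula, writing $\ell_{f,\epsilon}(x)=bx+c$ and using that $x$ is odd on $[-1,1]$ gives $\int_{-1}^1(bx+c)\,dx=2c$.

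The step I expect to require the most care is reconciling the exact equality $\int_{-1}^1 f\,dx=\int_{-1}^1\ell_{f,\epsilon}\,dx$ as written with the fact that $f$ is only approximated by $f_1$ to within $\epsilon$. The clean statement one actually obtains is $\bigl|\int_{-1}^1 f\,dx-\int_{-1}^1\ell_{f,\epsilon}\,dx\bigr|\le 2\epsilon$, since $\|f-f_1\|_\infty\le\epsilon$ forces $\bigl|\int_{-1}^1(f-f_1)\,dx\bigr|\le 2\epsilon$; the displayed equality should therefore be read either as holding for the approximant $f_1$ exactly, or as an approximation to $\int_{-1}^1 f\,dx$ whose error tends to $0$ with $\epsilon$. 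I would phrase the conclusion in this approximate form to keep it correct, while emphasizing that the \emph{exact} content --- that the cross terms $\int_{-1}^1 c_k P_{2k}\,dx$ all drop out --- is precisely what makes the linear term $\ell_{f,\epsilon}$ the sole carrier of the integral.
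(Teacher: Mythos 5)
Your proof is correct and follows exactly the route the paper intends: the paper states this corollary without proof, as an immediate consequence of Theorem~\ref{mjlai12232025} together with the orthogonality of $P_{2k}$ to all polynomials of degree $<2k$ (which kills each cross term $\int_{-1}^1 c_k(x)P_{2k}(x)\,dx$), and that is precisely your argument. Your closing observation is also well taken: as written the corollary asserts an exact equality $\int_{-1}^1 f(x)\,dx=\int_{-1}^1\ell_{f,\epsilon}(x)\,dx$ that in general holds only up to an error of $2\epsilon$, and your reformulation (exact for the approximant $f_1$, approximate for $f$ with error tending to $0$ as $\epsilon\to 0$) is the correct reading.
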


We now discuss how to use the expansions of Gauss-Legendre polynomials of odd degree or even 
degree 
for numerical integration of continuous functions. For any function $f\in C[-1, 1]$, we can write
$f(x)= f_o(x)+ f_e(x)$ with $f_o(x)=(f(x)-f(-x))/2$ and $f_e(x)= (f(x)+f(-x))/2$. Since the integration
of the odd part $f_o$ over $[-1, 1]$ is zero, we only need to consider $\int_{-1}^1 f_e(x)dx$. 
For any given tolerance $\epsilon>0$, we need to find a polynomial $p_{f, \epsilon}$ of even degree
such that $\|f_e(x)- p_{f,\epsilon}(x)\|_\infty \le \epsilon$. How to find $p_{f,\epsilon}$ is still 
a good research problem in general, in particular, when $f$ may not be very smooth.  
Let us discuss how to convert an even degree polynomial $p_{2d}$ in the expansion of Gauss-Legendre polynomials of odd degree. Similarly, we can expand $p_{2d}$ in Gauss-Legendre polynomials of even degree. 
Without loss of generality, we may write a polynomial function $p_{2d}(x) = \sum_{j=0}^d f_j x^{2j}$.  
 we can derive an algorithm to find coefficients $q_j, j=1, \cdots, d$  such that 
 \begin{equation*}
 p_{2d}(x) = \sum_{j=2}^{d} q_jx P_{2j-1}(x)+ q_1 x P_1(x) + p_{2d}(0), 
\end{equation*}
 where $q_j$ are real numbers for all $j=1, \cdots, d$. Indeed, as discussed above, 
we can iteratively solve for $q_j,j=d-1, \cdots, 1$ if we know $f_j, j=1, \cdots, d$ as $f_0=p_{2d}(0)$. 
Then $\int_{-1}^1 p_{2d}(x)dx = 2q_1/3+ 2p_{2d}(0)$. 
Another way to find $q_1$ is to use polynomial interpolation method. Also, in practice we do not know
$p_{2d}$. Instead, we only know $f$. Let us solve the following interpolation problem: letting 
$x_i, i=1, \cdots, d$ be distinct points over $(0, 1]$, find $q_1, \cdots, q_d$ 
satisfy the following interpolation conditions:
 \begin{equation*}
  \sum_{j=2}^{d} q_j P_{2j-1}(x_i)+ q_1  P_1(x_i) =(f(x_i)- f(0))/x_i, \qquad i=1, \cdots, d.  
\end{equation*}
Since Gauss-Legendre polynomials $P_{2j-1}, j=1, \cdots, d$ are linearly independent, 
such a linear system has a unique solution. As we need $q_1$, we simply use the Cramer rule to get 
\begin{equation}
\label{solution}
q_1 = \hbox{det}[ f(:), P_3(:), \cdots, P_{2d-1}(:)] / \hbox{det}[ P_1(:), P_3(:), \cdots, P_{2d-1}(:)],
\end{equation} 
where 
$$
\hbox{det}[ P_1(:), P_3(:), \cdots, P_{2d-1}(:)] = \begin{bmatrix} P_1(x_1) & P_3(x_1) & \cdots & 
P_{2d-1}(x_1)\cr
P_1(x_2) & P_3(x_2) & \cdots & P_{2d-1}(x_2)\cr
\vdots & \ddots & \cdots & \vdots \cr
P_1(x_d) & P_3(x_d) & \cdots & P_{2d-1}(x_d)\cr
\end{bmatrix} 
$$
and similar for $\hbox{det}[ f(:), P_3(:), \cdots, P_{2d-1}(:)]$. 
  
  \begin{thm}
  \label{betterthanGaussianQuadrature}
For any continuous even function $f$ and for any set of distinct points $x_1, \cdots, x_d\in (0, 1]$, 
compute $q_1$ as in (\ref{solution}).  Then  the quadrature 
  \begin{equation}
  \label{Q1}
  \int_{-1}^1 f(x) dx \approx \frac{2}{3}q_1+ 2f(0)
\end{equation}
which  is exact for all even polynomial functions $f$ of degree $\le 2d$.   For any continuous function
$f$, let $f= f_e+ f_o$ with even part $f_e$ and odd part $f_o$. We compute $q_{1,e}$ with 
$f_e$ in the place of $f$ in (\ref{solution}). Then 
  \begin{equation}
  \label{Q2}
  \int_{-1}^1 f(x) dx \approx \frac{2}{3}q_{1,e}+ 2f(0).
\end{equation}
 \end{thm}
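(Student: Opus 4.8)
The plan is to prove the two claims of Theorem~\ref{betterthanGaussianQuadrature} in sequence, starting with the exactness of the quadrature~(\ref{Q1}) for even polynomials of degree $\le 2d$, and then deducing~(\ref{Q2}) by splitting a general continuous $f$ into its even and odd parts. First I would observe that every even polynomial $f$ of degree $\le 2d$ can be written uniquely in the form $f(x)=\sum_{j=2}^{d} q_j x P_{2j-1}(x)+q_1 x P_1(x)+f(0)$, exactly as described in the discussion preceding the statement; this is simply the expansion~(\ref{laisidea}) specialised to the even case, where the residual has been absorbed into the $f(0)$ constant term because all the $P_{2j-1}$ vanish at $0$ and $xP_1(x)$ contributes no constant. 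The key point is then that for such an $f$, the function $g(x):=(f(x)-f(0))/x$ is a polynomial that equals $\sum_{j=1}^{d} q_j P_{2j-1}(x)$, so the interpolation conditions imposed on the nodes $x_1,\dots,x_d$ are satisfied \emph{exactly} by the true coefficients $q_1,\dots,q_d$.

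Next I would invoke the linear independence of $P_1,P_3,\dots,P_{2d-1}$ (orthogonal polynomials of distinct degrees), which guarantees that the interpolation matrix $\det[P_1(:),P_3(:),\dots,P_{2d-1}(:)]$ is nonsingular, so the Cramer-rule expression~(\ref{solution}) returns precisely the genuine coefficient $q_1$ of the expansion. With $q_1$ thus identified, I would compute $\int_{-1}^1 f(x)dx$ directly from the expansion: by the orthogonality of the Legendre polynomials every term $\int_{-1}^1 q_j xP_{2j-1}(x)dx$ for $j\ge 2$ vanishes, since $xP_{2j-1}(x)$ is a polynomial of degree $2j\le 2d$ that is orthogonal to the constants only in the relevant combination — more carefully, $\int_{-1}^1 x P_{2j-1}(x)dx=\langle x, P_{2j-1}\rangle=0$ for $j\ge 2$ because $P_{2j-1}$ is orthogonal to all polynomials of degree $<2j-1$, and $x$ has degree $1<2j-1$. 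The two surviving contributions are $q_1\int_{-1}^1 xP_1(x)dx=q_1\cdot\tfrac{2}{3}$ (using $P_1(x)=x$ and $\int_{-1}^1 x^2dx=2/3$) and $f(0)\int_{-1}^1 1\,dx=2f(0)$, giving $\int_{-1}^1 f(x)dx=\tfrac{2}{3}q_1+2f(0)$ exactly, which establishes~(\ref{Q1}).

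For the second assertion I would write a general continuous $f$ as $f=f_e+f_o$ with $f_e(x)=(f(x)+f(-x))/2$ and $f_o(x)=(f(x)-f(-x))/2$, as in the preliminary discussion. Since $f_o$ is odd its integral over the symmetric interval vanishes, so $\int_{-1}^1 f(x)dx=\int_{-1}^1 f_e(x)dx$. Applying~(\ref{Q1}) to $f_e$ with its Cramer coefficient $q_{1,e}$, and noting $f_e(0)=f(0)$ because $f_o(0)=0$, yields $\int_{-1}^1 f(x)dx\approx\tfrac{2}{3}q_{1,e}+2f(0)$, which is~(\ref{Q2}); this becomes an exact identity whenever $f_e$ happens to be a polynomial of degree $\le 2d$.

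The main obstacle I anticipate is bookkeeping around the exactness claim rather than any deep difficulty: I must be careful that the expansion of an even polynomial of degree $\le 2d$ really does terminate with the stated structure and that the interpolation recovers the \emph{same} $q_1$ as the true expansion coefficient. The cleanest way to secure this is to argue that the map sending an even polynomial of degree $\le 2d$ to the data $((f(x_i)-f(0))/x_i)_{i=1}^d$ is consistent with the coefficient vector $(q_1,\dots,q_d)$ under the invertible Vandermonde-type matrix built from the odd Legendre polynomials, so uniqueness of the interpolant forces agreement; everything else reduces to the orthogonality computations above.
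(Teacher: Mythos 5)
Your argument is correct and follows essentially the same route as the paper: identify the Cramer-rule $q_1$ with the true coefficient in the expansion $f(x)=\sum_{j=1}^{d}q_j xP_{2j-1}(x)+f(0)$, kill the $j\ge 2$ terms by orthogonality of $P_{2j-1}$ to degree-one polynomials, and reduce the general case to the even part. You actually supply slightly more justification than the paper does for why the interpolation system recovers the genuine $q_1$ (the paper simply asserts that the solution vector gives the exact representation), so the proposal is fine as written.
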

 \begin{proof}
 When $f$ is a polynomial of even degree $\le 2d$, the solution vector $(q_1, q_2, \cdots, q_d)$ gives
 us the exact representation of $f$ in terms of the Gauss-Legendre polynomials of degree $\le 2d$. Then 
 $$
\int_{-1}^1 f(x)dx = \sum_{j=2}^d \int_{-1}^1 q_j x P_{2j-1}(x)dx + \int_{-1}^1 f(0)dx 
=\int_{-1}^1 q_1 x P_1(x)dx+2f(0)= \int_{-1}^1 q_1 x^2 dx +2f(0)
$$
which is the right-hand side of (\ref{Q1}). Similarly, $f(x)= (f(x)+f(-x))/2+ (f(x)-f(-x))/2$ with 
$f_e= (f(x)+f(-x))/2$ and $f_o= (f(x)-f(-x))/2$. Since $\int_{-1}^1 f_o(x)dx=0$, we use $f_e$ for 
$f$ in (\ref{Q1}) to get (\ref{Q2}) as $f_e(0)=f(0)$.  
 \end{proof} 



We remark that when $f$ is even, the quadrature (\ref{Q1}) uses only $d$ function 
values to make the quadrature to be exact for all polynomials of degree $2d$ which is 
slightly better than the well-known Gaussian quadrature mentioned in the beginning of the section.  
Also,  the quadrature (\ref{Q2}) does not need to find the zeros of Legendre polynomials and can 
be built based on any distinct $d$ points in $(0, 1]$.  
Finally, we remark that the set of distinct points can be chosen equally-spaced points over $[0, 1]$ with
spacing $1/(d+1)$. 

\begin{example}
Consider $d=4$. We know the Legendre polynomials  $P_1(x)=x$ and $P_3(x)= x^3-3/5 x$. When $f(x)= 
ax^4+bx^2+c$, we can rewrite  $f(x)=q_2 x P_3(x)+q_1 xP_1(x)+c$ with $q_2=a$ and 
$q_1=b+3a/5$.   Note that we can solve  $a, b$ from the equations  $a x_i^4+ bx_i^2 = 
f(x_i)-f(0)$ for $i=1, 2$. Taking any two distinct points in $(0, 1]$, say $x_1=1$ and 
$x_2=1/\sqrt{2}$, 
we have $a= 2f(1)- 4f(1/\sqrt{2})+2f(0)$ and $b=4f(1/\sqrt{2})- f(1)-3f(0)$.  So $q_1= 
4f(1/\sqrt{2})- f(1) +3(2f(1)- 4f(1/\sqrt{2}))/5 = \frac{1}{5} f(1)+ 
\frac{8}{5}f(1/\sqrt{2})$.  Thus, the quadrature 
$$
\int_{-1}^1 f(x) dx = \frac{2}{15}  f(1)+ 
\frac{16}{15}f(\frac{1}{\sqrt{2}}) +\frac{4}{5} f(0) 
$$   
for any  even function $f$.  This quadrature shows that one only uses $3$ function values 
to make the quadrature exact for all even polynomials of degree $4$.   
\end{example}

\begin{example}
We now give a MATLAB code for  even polynomials of any degree $2d$. That is, this code
will produce a quadrature based on $d+1$ nodes which is exact for even polynomial of degree
$\le 2d$. 
\begin{verbatim}
%This demo is to find integration of even function f over [-1, 1]. 
%It is written by Dr. Ming-Jun Lai on Nov. 1, 2024. 
%Choose your polynomial of even degree 2d, say.
d=4;
c=rand(d+1,1); 
f=@(x) c(1)+ c(2)*x.^2+ c(3)*x.^4+c(4)*x.^6+c(5)*x.^8;
%f=@(x) cos(x); %or you can replace polynomials by a function f=cos(x);
x=[1/d:1/d:1]; %nodes over (0, 1];
A=[x.^2];
for i=2:d
P=Legendre(2*i-1);
y=polyval(P,x);
A=[A; x.*y]; 
end
A=A';
b=f(x)-f(0); b=b';  
q=A\b; q1=q(1);
myvalue=2*q1/3+2*f(0);
exact= 2*c(1)+2*c(2)/3+2*c(3)/5+2*c(4)/7+2*c(5)/9;
%exact=2*sin(1);
[exact, myvalue]
\end{verbatim}
In the above, we use Legendre.m  which is a MATLAB code to generate Legendre polynomials with leading coefficient 1. 
The code is available from the author upon request or can be obtained from \cite{L78} and from ChaptGPT.   
I purposely put a function $f(x)=cos(x)$ in the command line above 
and one can check again the exact value $2\sin(1)$ which gives an excellent approximation 
with error $1.182169251379150e-09$.   The coefficients for the corresponding quadrature 
formula can be found \textcolor{blue}{ in } the first row of the inverse matrix of $A$.   
\end{example}

\section{Construction of Orthogonal Polynomials over Polygonal Domains}
We shall use bivariate  splines to construct  orthogonal polynomials
over polygons in detail.  Let $\Omega$ be a polygon  in $\mathbb{R}^2$ 
and let $\triangle$ be a triangulation of $\Omega$ 
(cf. \cite{LS07} for definition of triangulation). 
Recall the bivariate spline space on $\triangle$ defined in (\ref{SS}).  
In particular, we shall use the smoothness $r=d$ in our construction. 

Let us formulate the construction as a minimization problem as follows. We write any 
polynomial $p\in \mathbb{P}_d$ as a spline function
in $S^r_d(\triangle)$ with $r=d$.  Recall the Bernstein-B\'ezier form as follows: for 
each triangle $T=\langle \bfv_1, \bfv_2, 
\bfv_3\rangle$, let $b_1, b_2, b_3$ be the barycentric coordinates of point $\bfx$ based 
on $T$. We define  
$$
B^T_{ijk} = \frac{d!}{i!j!k!} b_1^i b_2^j b_3^k
$$
to be the polynomial of degree $(i,j,k)$ over $T$. As $b_1, b_2, b_3$ are linear 
polynomials of $\bfx$, $B_{ijk}^T$ is a polynomial of 
degree $d=i+j+k$.  Thus, $p=\sum_{i+j+k=d} c_{ijk}^T B_{ijk}^T$ for 
some coefficients $c_{ijk}^T$. Letting $\bfc^T=
(c_{ijk}^T, i+j+k=d)$ be the vector of $p$ with respect to $T$,  let 
$$
\bfc=( \bfc^T, T\in \triangle)
$$ 
be the coefficient vector of $p$ over the polygon $\Omega$ 
in terms of the Bernstein-B\'ezier form.  Note that $p\in C^r(\Omega)$ for 
$r\ge 0$ for all positive integer $r$. It is well-known that there is a smoothness matrix 
$H_r$ such that $H_r\bfc=0$ if and only if the spline 
function $s$ with coefficient vector $\bfc$ is in $C^r(\Omega)$ for each $r$. 
See \cite{LS07} for detail. The computation of these
smoothness matrices $H_r, r\ge 1$ are implemented in MATLAB according to \cite{ALW06} \textcolor{blue}{.}   
One of the main reasons that we are able to construct the orthogonal polynomials is the 
formulas for inner products of two polynomials which was given in \cite{CL91} 
or \cite{LS07}.      That is, we know the formulas for
\begin{equation}
\label{keyformula}
\int_T B^T_{ijk} B^T_{i',j',k'} dxdy = 
\frac{{i+i'\choose i}{j+j'\choose j}{k+k'\choose k}}{{2d\choose d}{2d+2\choose 2}}A_T
\end{equation}
for all $i+j+k=d$ and $i'+j'+k'=d$ (cf. \cite{CL91} or \cite{LS07}), where $A_T$ is the 
area of triangle $T$.  Note that these integrations are exact.  
Recall $D=(d+1)(d+2)/2$ is the dimension of $\mathbb{P}_d$.  There are 
D polynomials $p_i =  \hbox{diag}(B_{ijk}^T,i+j+k=d,  T\in \triangle)\bfc_i$, 
$i=1, \cdots, D$ such that 
$$
[p_1,\cdots,p_D]^\top  [p_1,\cdots,p_D]  = C^\top \hbox{diag}( \int_T B^T_{ijk} 
B^T_{i',j',k'} dxdy, i+j+k=d,i'+j'+k'=d,T\in \triangle)
C = I_\triangle,
$$
where $I_\triangle$ is the identity matrix of size $D\times D$ and 
$C$ be a matrix of size $D\times (N \times D)$ with $N$ being the number of
triangles in $\triangle$.  
Finding $C$ may not be easy. One usual approach is to start with constant $1$ with a 
normalization of
the area of $\Omega$ and then use the Gram-Schmidt orthogonalization procedure.  
In this way,  one can obtain a set of orthogonal polynomials  iteratively.  

Instead, in this paper, we consider the following minimization problem: letting 
\begin{equation}
\label{mainMatrix}
M_\triangle^d =
\hbox{diag}( \int_T B^T_{ijk} B^T_{i',j',k'} dxdy, i+j+k=d,i'+j'+k'=d,T\in \triangle)
\end{equation} 
be the diagonal block matrix of size $N\times D$ time $N\times D$, more precisely, $M^d_\triangle$ 
has $N$ blocks on the diagonal and each block is of size $D\times D$, 
\begin{equation}
\label{min}
\min  \frac{1}{2} \| C^\top M_\triangle^d C- I_\triangle\|_F,  \quad H_\triangle C =0, 
\end{equation}
where $H_\triangle$ stands for the matrix associated with all the smoothness conditions 
cross all interior edges
of the triangulation $\triangle$ up to the smoothness order $d$. We 
refer to  \cite{LS07} for the smoothness condition and to \cite{ALW06} for its MATLAB 
implementation. 

Certainly, the above minimization is not easy to solve.  
More studies are needed.  Let us simplify and reformulate it so that we can present 
a computational algorithm to construct MOP in the following two subsections. One is the construction of orthogonal 
polynomials on $\mathbb{P}_d$ and one is the construction 
in $\mathbb{P}_{d+1}\ominus \mathbb{P}_d$.

\subsection{Orthonormal Polynomials for $\mathbb{P}_d$}
Let us first simplify the minimization (\ref{min}) as follows. 
First we can compute $M^d_\triangle$ in (\ref{mainMatrix}) using our MATLAB code 
(cf. \cite{ALW06}). Then we compute the standard $L {\cal D} L^\top$ 
decomposition of $M^d_\triangle$ in MATLAB. That is, writing 
$M^d_\triangle = L {\cal D} L^\top$, we let $\sqrt{\cal D}$ be the square 
root of ${\cal D}$ and 
$C_1= \sqrt{\cal D} LC$.  Writing ${\cal H}=  H_\triangle L^{-1} \sqrt{\cal D}^{-1}$,  
we have obtained a new minimization:
\begin{equation}
\label{min2}
\min  \frac{1}{2} \| C_1^\top C_1- I_\triangle\|_F,  \quad {\cal H} C_1 =0. 
\end{equation}
Let us point out that the computation $M^d_\triangle = L {\cal D} L^\top$ is done blockwisely since 
$M^d_\triangle$ is a diagonal block matrix consisting of $N$ blocks of size $D\times D$ and the computation
can be done in parallel. 

It turns out that 
this minimization can be easily solved by using the 
null space property of ${\cal H}$.  In fact, $C_1=\hbox{null}({\cal H})$ 
in terms of MATLAB language. That is, the MATLAB command {\tt null} automatically returns $C_1$ 
which satisfies $C_1^\top C_1
=I_\triangle$. The discovery above leads to the following computational algorithm. See 
Algorithm~\ref{alg2} below. Before presenting the algorithm,  
let us show the contour plot of all orthonormal polynomials of degree $2$ over a standard 
rectangular domain $[0, 1]^2$. See Figure~\ref{quadraticop} for their 3D view and 
Figure~\ref{quadraticop2} for their contour view.   
More numerical examples of orthonormal polynomials over an L-shapded domain and more 
complicated domains will be shown in a section later. 

\begin{figure}[htpb]
\centering
	\begin{tabular}{ccc}
	\includegraphics[width=0.2\linewidth]{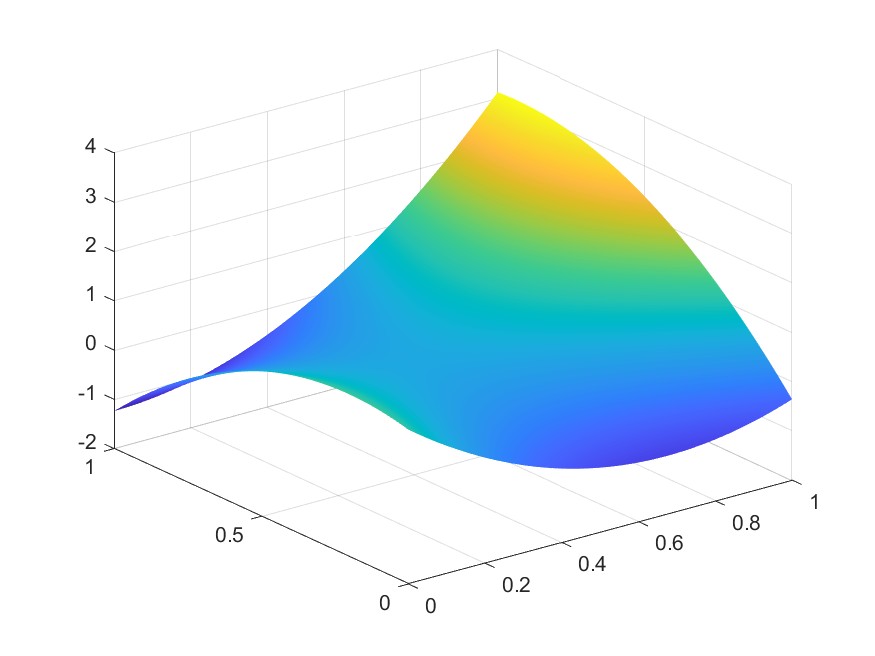}&
	\includegraphics[width=0.2\linewidth]{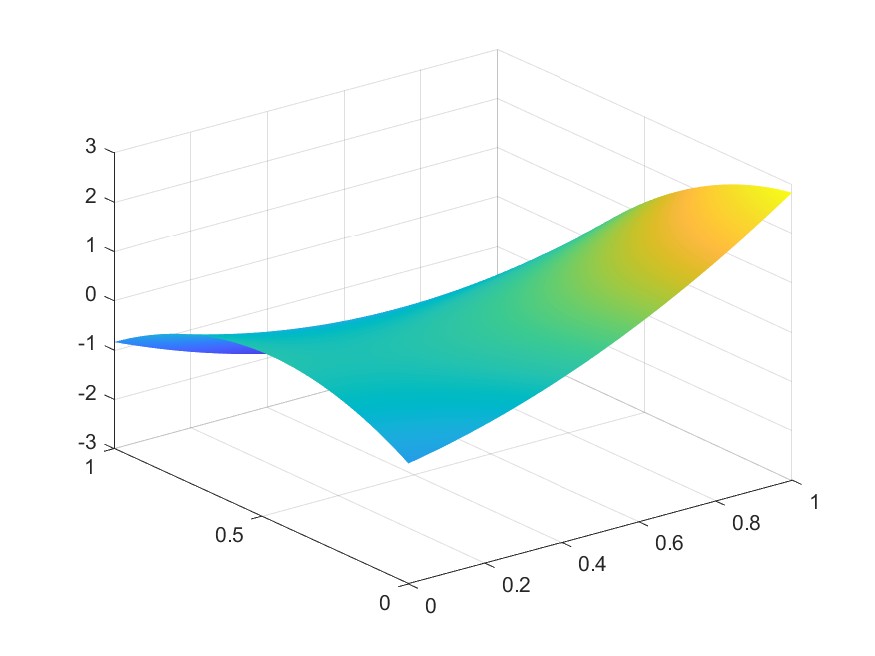}&
	\includegraphics[width=0.2\linewidth]{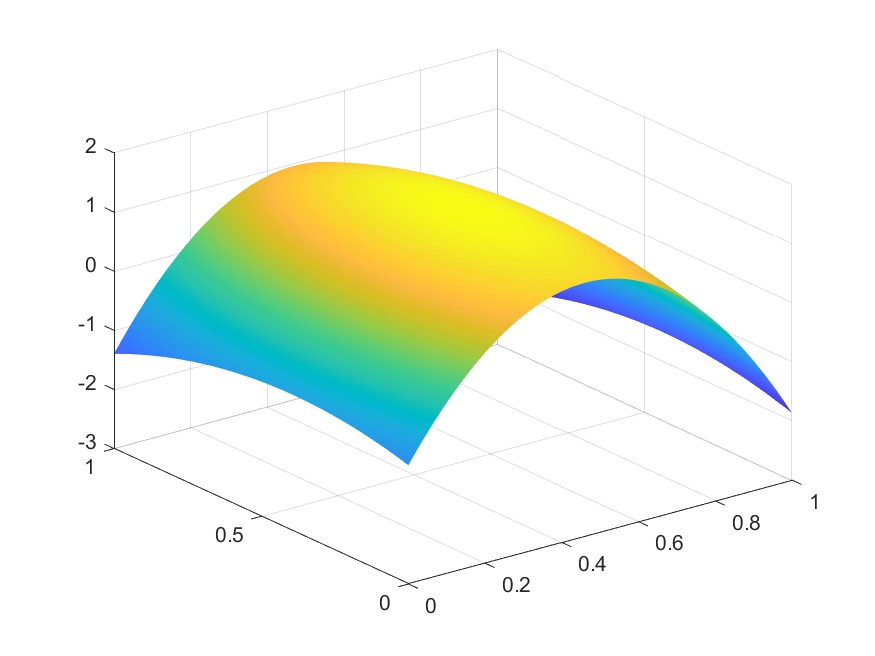}\\
	\includegraphics[width=0.2\linewidth]{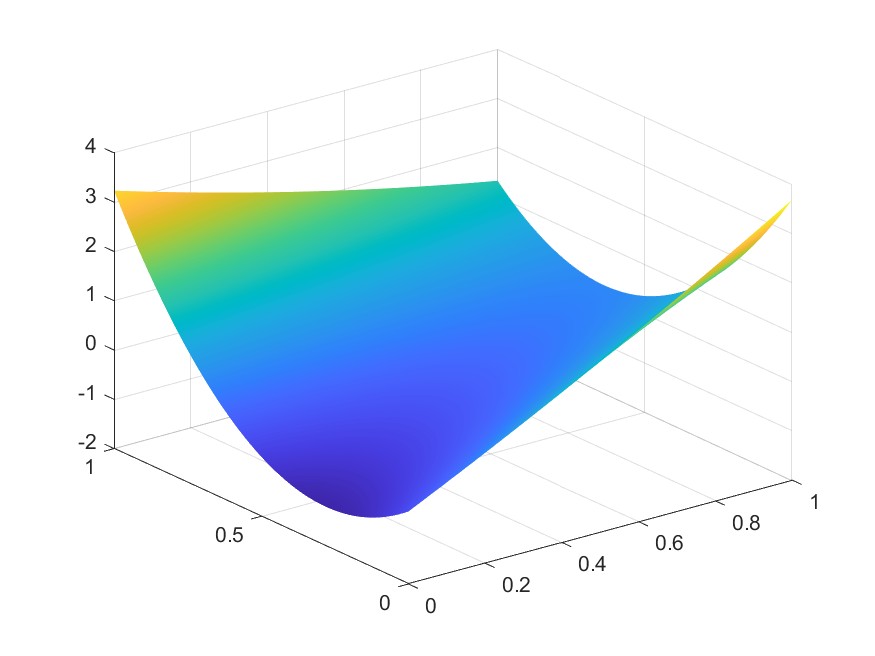}&
	\includegraphics[width=0.2\linewidth]{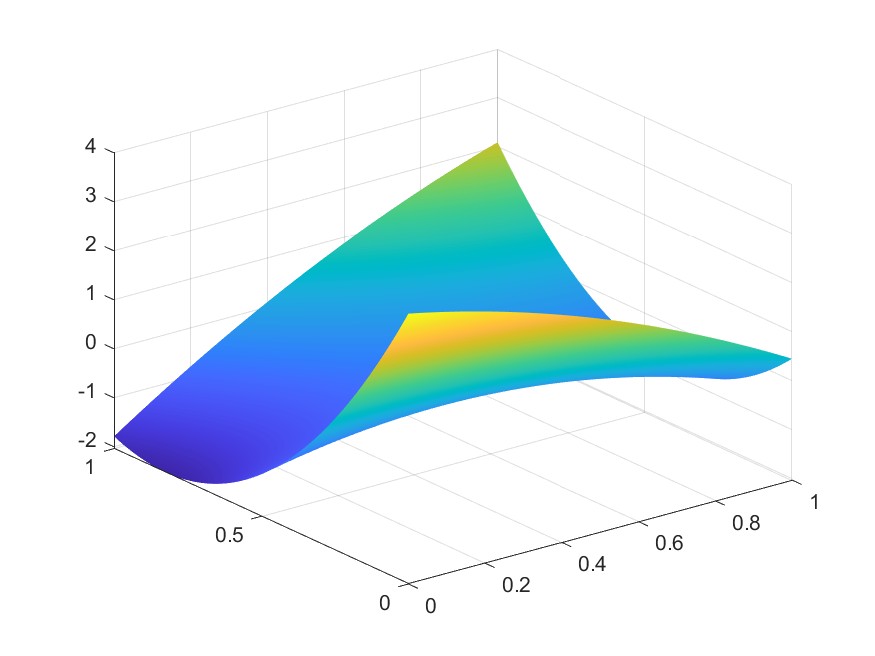}&
	\includegraphics[width=0.2\linewidth]{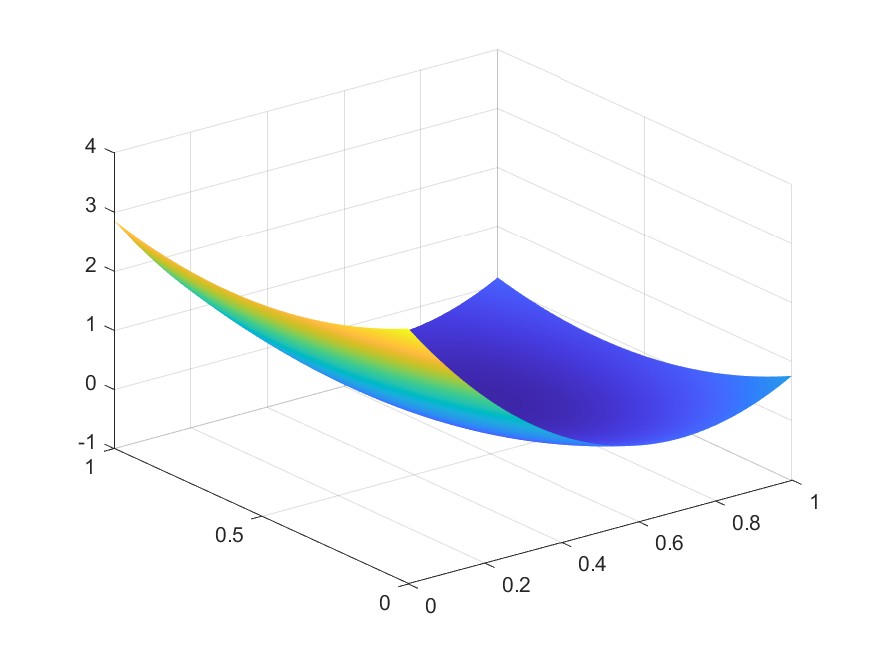}\\
\end{tabular}
	\caption{Quadratic Orthogonal Polynomials over Domain $[0, 1]^2$ \label{quadraticop}}
\end{figure}

\begin{figure}[htpb]
\centering
	\begin{tabular}{ccc}
		\includegraphics[width=0.3\linewidth]{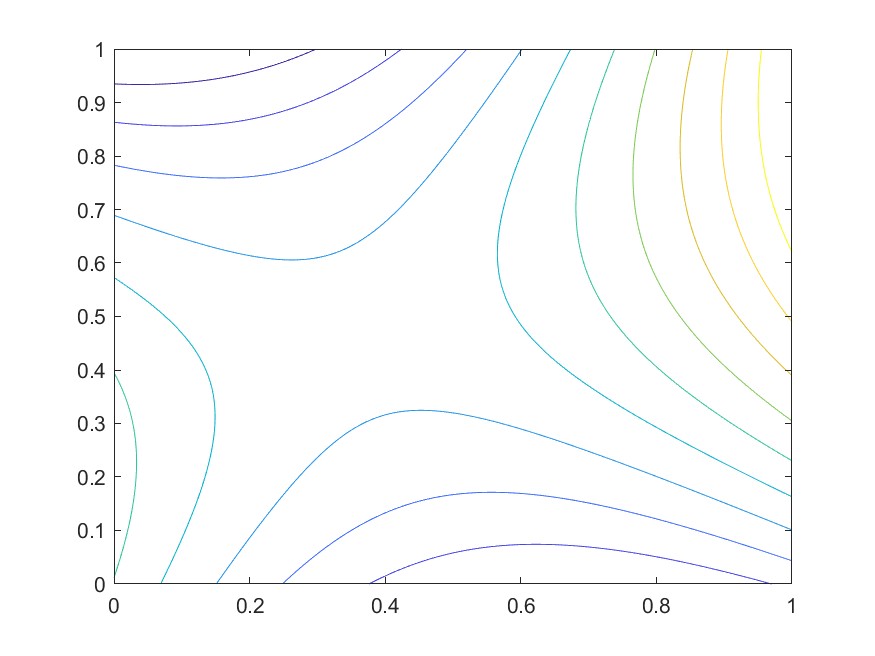}&
		\includegraphics[width=0.3\linewidth]{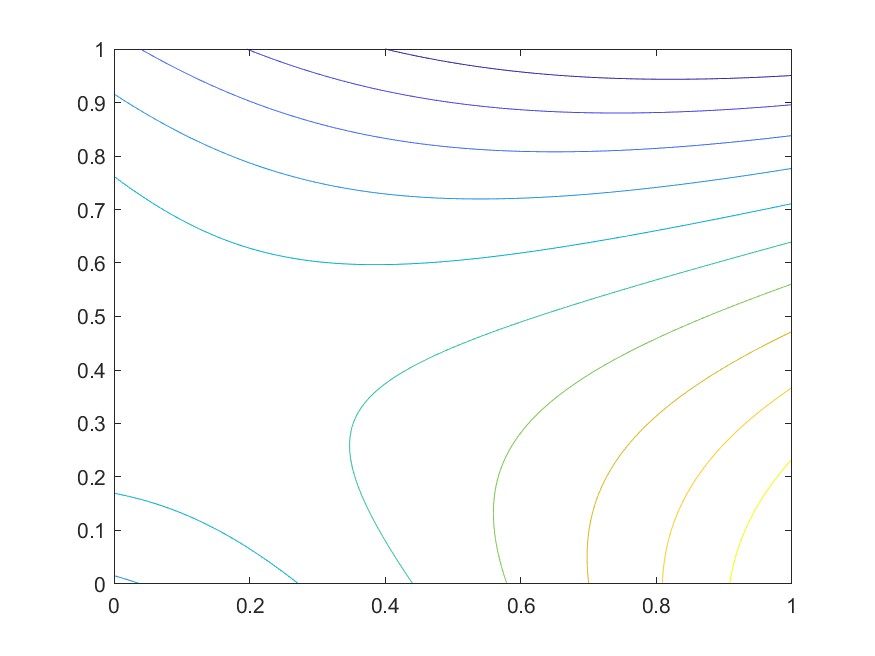}&
		\includegraphics[width=0.3\linewidth]{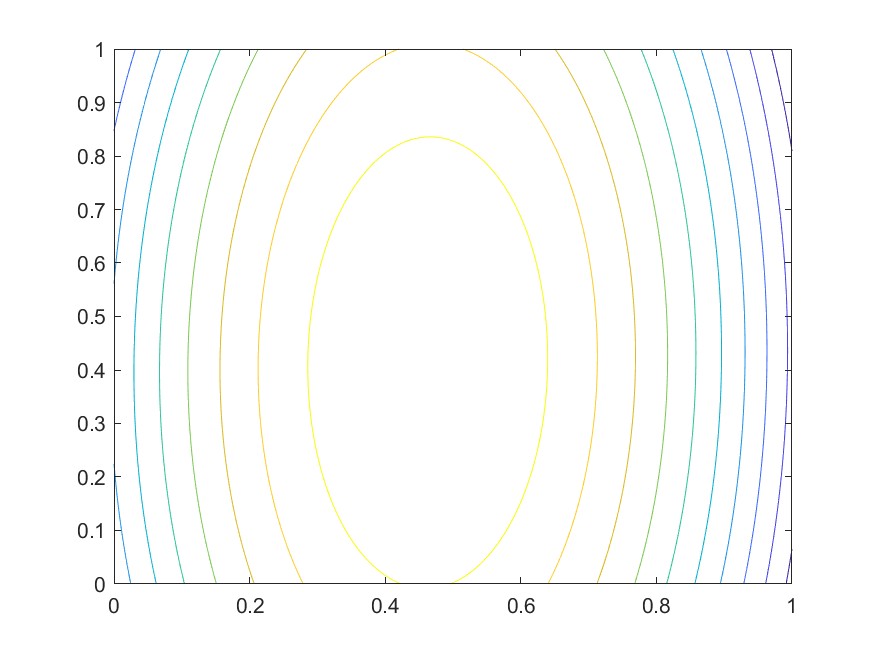}\\
		\includegraphics[width=0.3\linewidth]{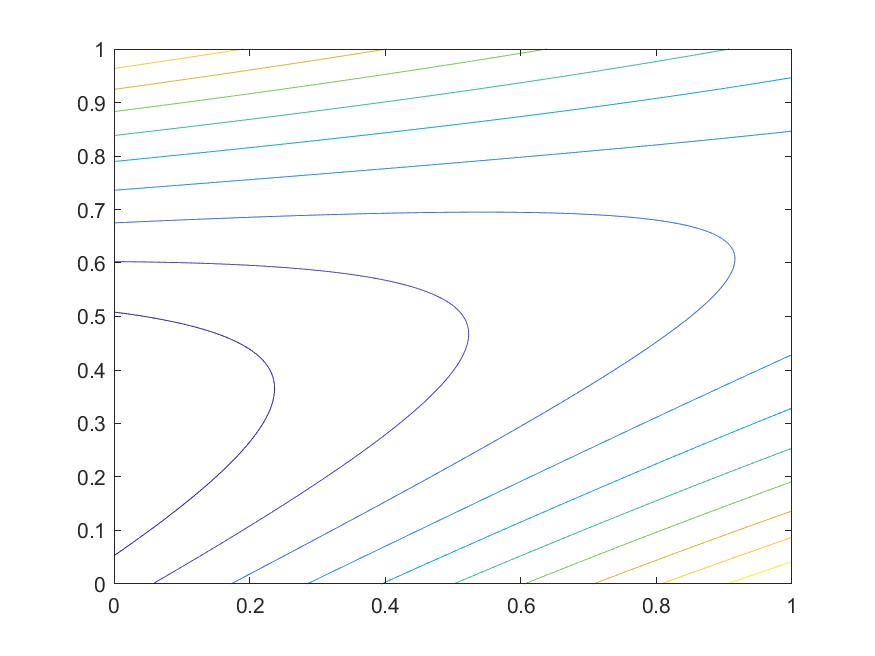}&
		\includegraphics[width=0.3\linewidth]{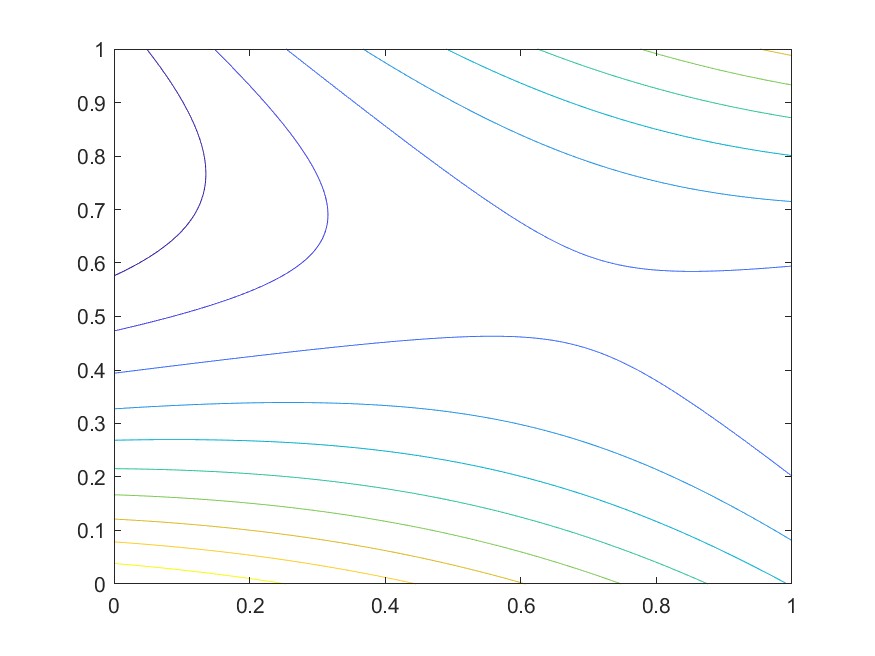}&
		\includegraphics[width=0.3\linewidth]{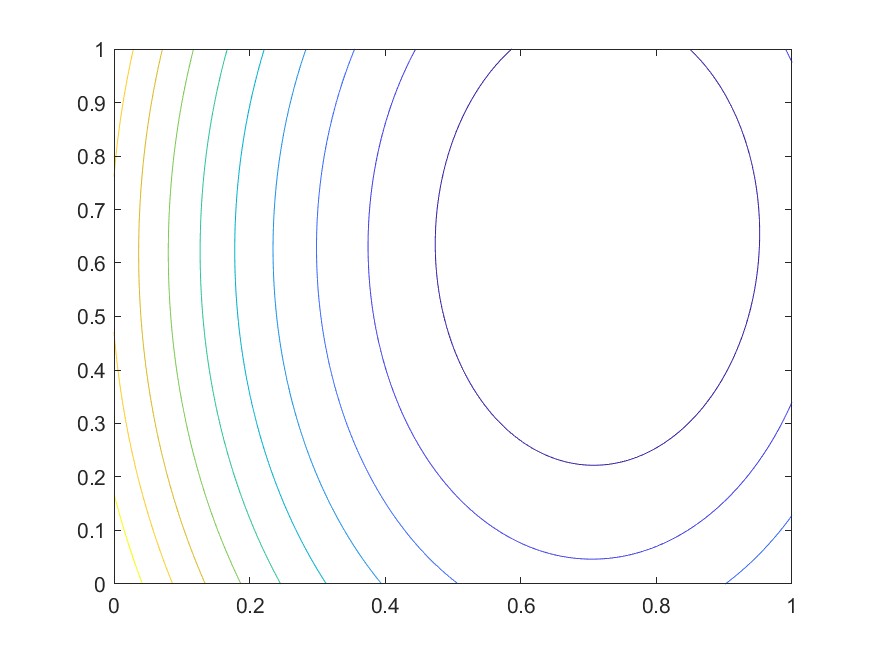}\\
	\end{tabular}
	\caption{Contours of Quadratic Orthogonal Polynomials over Domain $[0, 1]^2$ 
\label{quadraticop2}}
\end{figure}

\begin{algorithm}{Construction of Bivariate Orthonormal Polynomials} 
\label{alg1}
\begin{itemize}
\item{1.} {\bf Inputs:} Given a polygon $P$ and given a positive integer $d\ge 1$;
\item{2.} {\bf Initialization: }Compute  a triangulation $\triangle$ of $P$  and compute the diagonal block matrix 
$M^d_\triangle$  as in (\ref{mainMatrix}). 
\item{3.} Find $L {\cal D} L^\top$ decomposition  of $M_\triangle$ to get $L$ and $\sqrt{\cal D}$ which is done 
blockwisely and in parallel.
\item{4.} Compute the full smoothness matrix $H_\triangle$ according to the smoothness conditions in \cite{LS07}.
\item{5.} Form ${\cal H}= H_\triangle L^{-1} \sqrt{\cal D}^{-1}$. 
\item{6.} Compute $C_1=\hbox{\tt null}({\cal H})$ using MATLAB command {\tt null.m}.
\item{7.} Find $C= \sqrt{\cal D}^{-1} L^{-1} C_1$.  
\item{8.} Then ${\cal B}_N C$ are the orthonormal polynomials of degree $d$ over polygon $P$, 
where 
$$
{\cal B}_N=  \hbox{diag}(B_{ijk}^T,i+j+k=d,  T\in \triangle)).
$$ 
\item{9.} {\bf Output:} Output the polynomial $(B_{ijk}^{T_1},i+j+k=d)\bfc_1^\top$ in Taylor format, 
where $T_1$ is the first triangle of $\triangle$ and $\bfc_1$ is the first row of $C$. 
\end{itemize}
\end{algorithm}

Similarly, we can do the construction in the 3D setting as we have the 3D formula for (
\ref{keyformula}) and MATLAB code for generating $M_\triangle$ in (\ref{mainMatrix}) in 
the 3D setting, see \cite{ALW06} or \cite{LL22}.  We leave the detail to a future 
publication. 

\subsection{Iterative Orthogonal Polynomials}
In addition to have constructed orthogonal polynomials in $\mathbb{P}_d$, we now explain 
how to construct 
orthonormal polynomials iteratively. That is, assume that we have found the orthonormal 
polynomials for $\mathbb{P}_d$, 
say $P_{d,1}, \cdots, P_{d, D_d}$, we now construct $P_{d+1, 1}, \cdots, P_{d+1, 
D_{d+1}-D_d}$ which are orthonormal 
polynomials for $\mathbb{P}_{d+1} \ominus \mathbb{P}_d$. Such a \textcolor{blue}{sequence } of orthonormal 
polynomials will be more 
convenient to use than the ones constructed in the previous subsection. That is, we first 
use the orthonormal polynomials
in $\mathbb{P}_d$ to approximate a given function $f$ over a polygon $P$. Then we can add 
$D_{d+1}- D_d$ new terms to
approximate $f$. We then repeat to add more and more terms for a better accurate 
approximation.  

We now explain how to do. Assume that we have spline coefficient vectors $C_1, \cdots, 
C_{D_d}$ for orthonormal polynomials
 $P_{d,1}, \cdots, P_{d, D_d}$. We use the degree raising operator (cf. \cite{LS07}) to 
express these coefficient 
 vectors in terms of splines of degree $d+1$. Let us denote $C_{d+1}$. The spline 
coefficients of the orthonormal polynomials in $\mathbb{P}_{d+1}$ must be orthogonal to  
these newly degree raised spline coefficients. That is, we seek the coefficient vectors 
$C_1$ which satisfy
 \begin{equation}
 	\label{orthcond}
 	  C_{d+1} M_\triangle^{d+1} C_1 = 0,
 \end{equation} 
 where $M_\triangle^{d+1}$ is the mass matrix for spline of degree $d+1$.  We need to solve
 a new minimization:
 \begin{equation}
 	\label{min3}
 	\min  \frac{1}{2} \| C_1^\top C_1- I_\triangle\|_F,  \quad {\cal H} C_1 =0 \hbox{ and }  C_{d+1} M_\triangle(d+1) C_1 = 0. 
 \end{equation} 
 
 \begin{figure}[htpb]
 \centering
 	\begin{tabular}{ccc}
 	\includegraphics[width=0.3\linewidth]{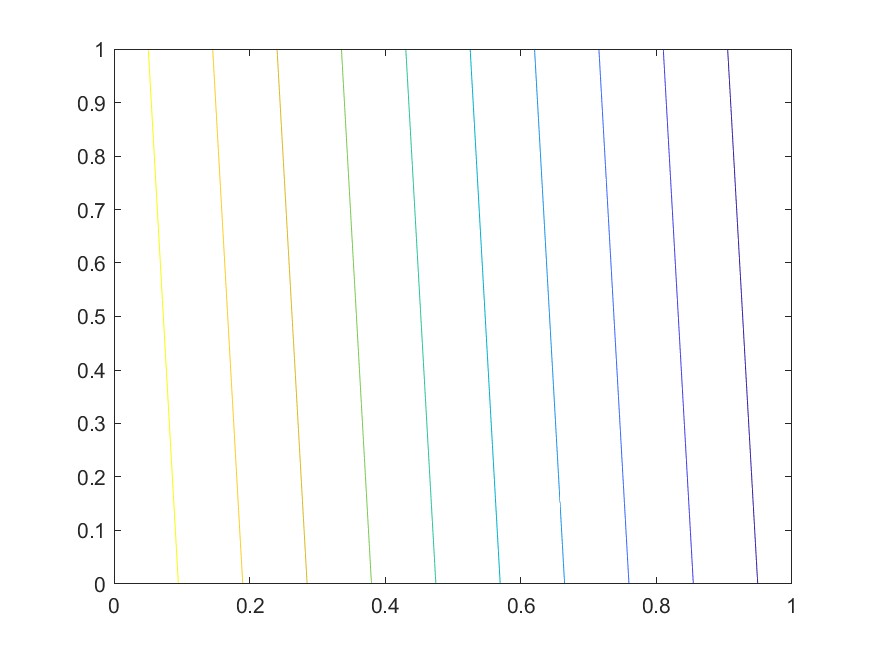}&
 	\includegraphics[width=0.3\linewidth]{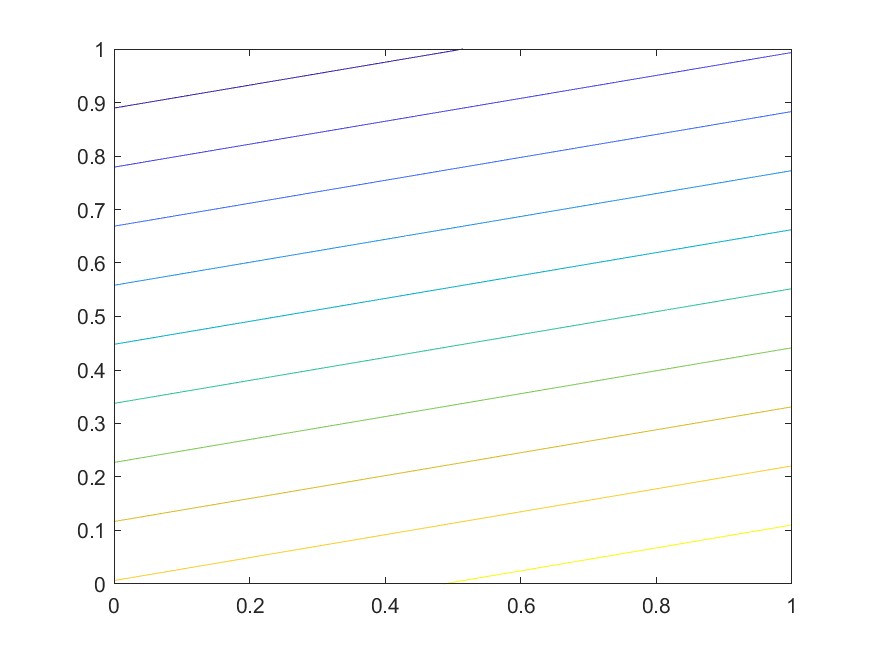}&
 	\includegraphics[width=0.3\linewidth]{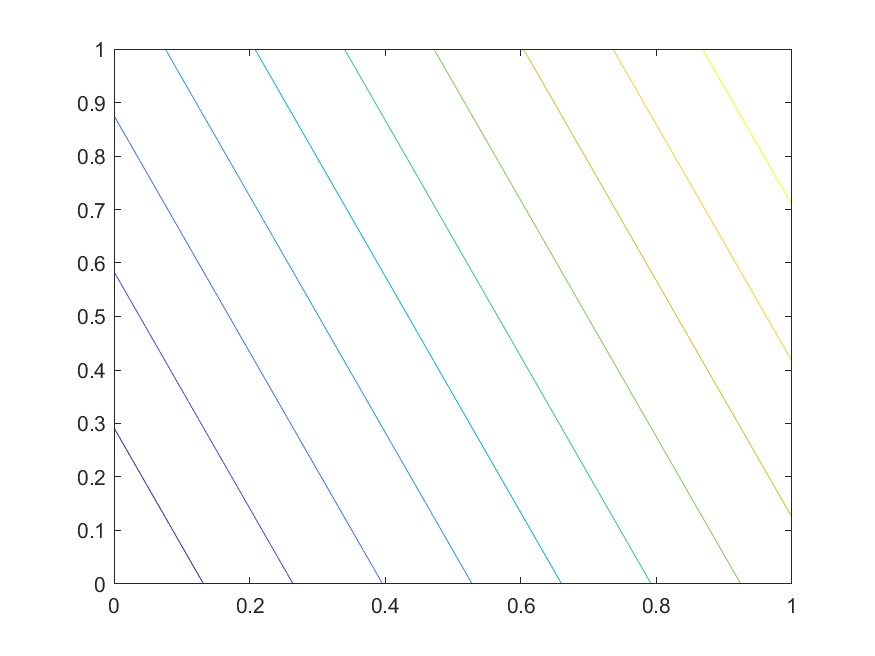}\cr
 	\includegraphics[width=0.3\linewidth]{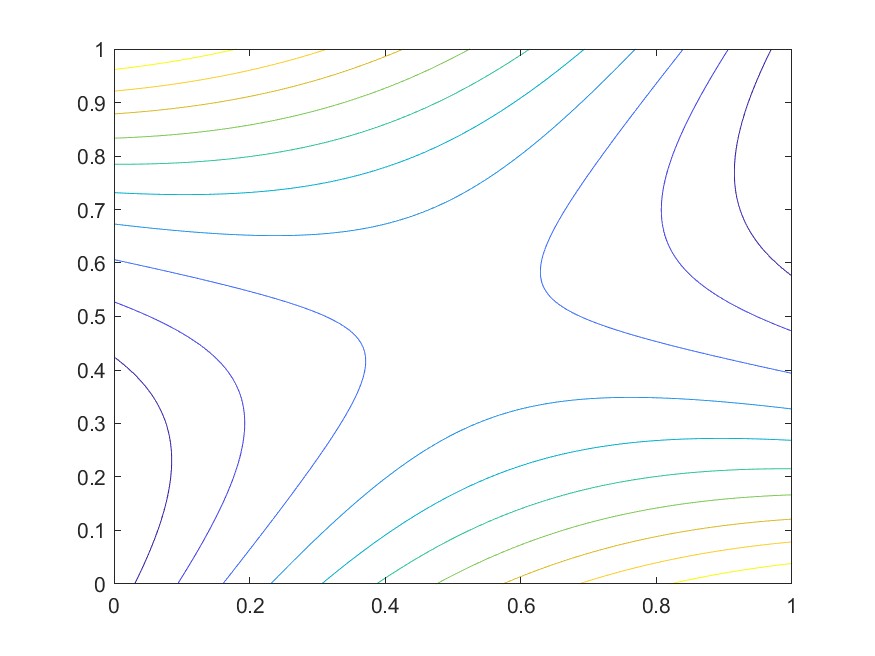}&
 	\includegraphics[width=0.3\linewidth]{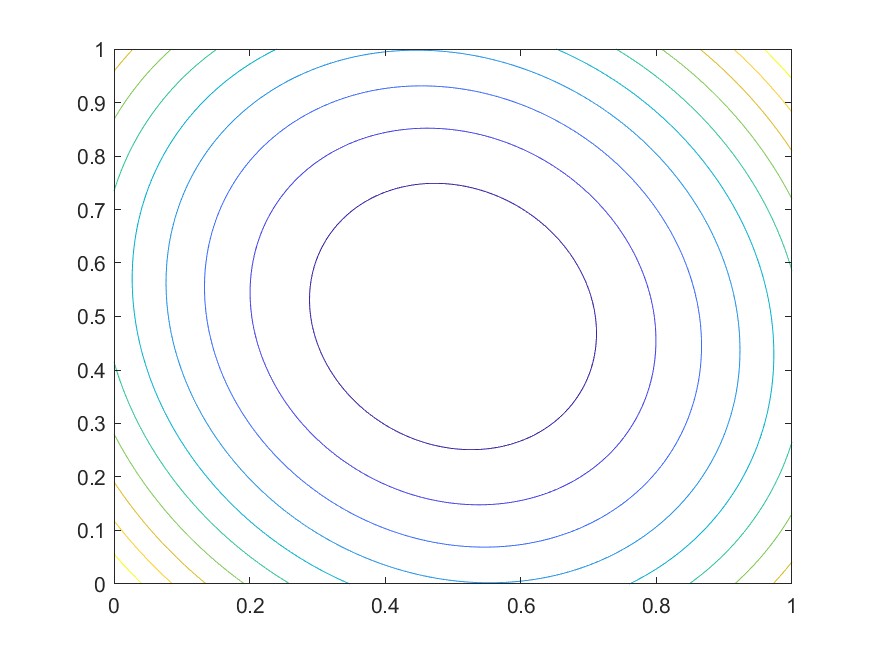}&
 	\includegraphics[width=0.3\linewidth]{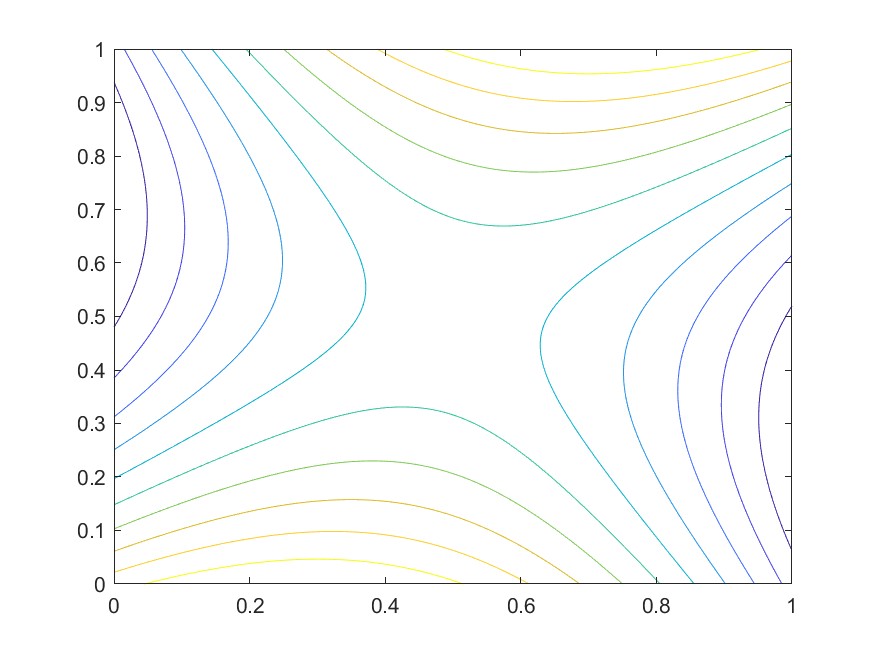}
 \end{tabular}
 	\caption{Contours of Orthogonal Polynomials of Degree 1 (top row) and Degree 2 (bottom row) over Domain $[0, 1]^2$ \label{quadraticContour}}
 \end{figure}

 The above discussion leads to the following computational algorithm:
 
 \begin{algorithm}{Iterative Construction of Bivariate Orthonormal Polynomials}
 \label{alg2}
 	\begin{itemize}
 		\item{1.} {\bf Inputs:} Given a polygon $P$ and given a positive integer $d\ge 1$; Input the coefficient vectors of all orthonormal polynomials of degree $d$ over $P$.  
 		\item{2.} {\bf Initialization: }Compute  a triangulation $\triangle$ of $P$  and compute the diagonal matrix 
\textcolor{blue}{ $M^{d+1}_\triangle$ } 
 		as in (\ref{mainMatrix}). 
 		\item{3.} Find $L {\cal D} L^\top$ decomposition  of $M^{d+1}_\triangle$ 
to get $L$ and $\sqrt{\cal D}$.
 		\item{4.} Compute the full smoothness matrix $H_\triangle$.
 		\item{5.} Form ${\cal H}= H_\triangle L^{-1} \sqrt{\cal D}^{-1}$.
 		\item{6.} Use the degree raising algorithm to rewrite the spline coefficient vectors of the orthonormal polynomials of degree $d$ to be a polynomial of degree $d+1$ and then denote them by $C_{d+1}$. 
 		\item{7.} Compute ${\cal C}=C_{d+1} M^{d+1}_\triangle$ and form 
${\cal H}_{d+1}=[{\cal H}; {\cal C}]$; 
 		\item{8.} Compute $C_1=$ {\tt null} $({\cal H}_{d+1})$ using MATLAB command {\tt null.m}.
 		\item{9.} Find $C= \sqrt{\cal D}^{-1} L^{-1} C_1$.  
 		\item{10.} Then ${\cal B}_N C$ are the orthonormal polynomials of degree $d$ over polygon $P$, where 
 		$$
 		{\cal B}_N=  \hbox{diag}(B_{ijk}^T,i+j+k=d+1,  T\in \triangle)).
 		$$ 
 		\item{9.} {\bf Output:} Output the polynomial $(B_{ijk}^{T_1},i+j+k=d)\bfc_1^\top$ in Taylor format, where 
 		$T_1$ is the first triangle of $\triangle$ and $\bfc_1$ is the first row of $C$. 
 	\end{itemize}
 \end{algorithm}
 
 Let us  show the contour plot of the orthonormal polynomials of degree $1$  and the 
orthonormal polynomials of
 degree $2$ which are orthogonal to all polynomials of degree $1$ over a standard 
rectangular domain  $[0, 1]^2$.  See Figure~\ref{quadraticContour}. 
 We can compare the contours in Figure~\ref{quadraticContour} with the contours in 
Figure~\ref{quadraticop2} to see that the graphs look
 better when using the iterative orthonormal construction in Algorithm~\ref{alg2} 
than using Algorithm~\ref{alg1}. The contour lines on the top three graphs in 
Figure~\ref{quadraticContour} are linear polynomials and the bottom three graphs are quadratic 
polynomials. Also, among the bottom three graphs, the first and last one look similar in the sense
that the last one is a rotation of the first one.

\subsection{Numerical Examples}
In this section, we show many examples in the 2D setting.  However, examples in the 3D 
settings are difficult to compute and visualize. The detail is left to a future 
publication.   
Let us present some examples of orthogonal polynomials based on a triangle, 
rectangular domain, an L-shaped domain and a flower domain which has  a hole in the 
middle. 
 
\begin{example}
We first start with examples of orthonormal polynomials over a triangle $T$ with vertices
$(0,0),(1,0), (0.5,1)$.  For convenience, we only present their explicit formula in Taylor format for degree $d=1,2$. 
We start with $P_0= \sqrt{2}$ and then compute two linear orthogonal polynomials $P_{1,0}(x,y)=a_0 + b_0 x+ c_0 y$ and 
$P_{1,1}(x,y)= a_1 + b_1 x+ c_1 y$, where $a_0, b_0, c_0$ is the first row while $a_1, b_1, c_1$ is the second row below.
\begin{eqnarray}
  3.997836537223834 & -5.882837001131618 & -3.169254109974075\cr
   0.131540950422845& -3.659539427047720 &  5.094686289303045.
\end{eqnarray}

$P_{2,0}(x,y)= a_2+ b_2x+c_2 y+ d_2 x^2+e_2 xy +f_2 y^2$, and similar for $P_{2,1}, P_{2,2}$. Their coefficients $a_i, b_i, c_i, d_i, e_i, f_i$ for 
$i=0,1,2$ are given in Table~\ref{tab2}. 
\begin{table}[htpb]
\tiny  
\begin{tabular}{cccccc}\hline 
  0.568231172946732 & -19.049005993186132 &  24.027659606205336&  23.574328321631199 & -22.626611642225328 & -12.946151191161945\cr \hline
   5.482587063060269& -22.420544765433238 & -10.229879356332292 & 15.244458227262641 &  35.880432690852999&-7.732363957959931\cr \hline
   4.859871647487797& -16.924968038160749 &-13.491521122661251 & 17.083838489671507  & -0.794352257553776 & 19.496351375506610\cr \hline
   \end{tabular}
   \caption{Polynomial Coefficients for $P_{2,0}, P_{2,1}, P_{2,2}$. \label{tab2}}
   \end{table}
   We  have verified numerically that the polynomials with these coefficients are orthogonal to each other within the accuracy of $1e-16$.  

For convenience, we only show them in graph for $d=0, \cdots, 5$ in 
Figure~\ref{Torthpoly13}
and Figure~\ref{Torthpoly45}, respectively, where $T$ with vertices $(0,0),(1,0), (1,1)$.
 
\begin{figure}[htpb]
\centering
	\begin{tabular}{cccc}
\includegraphics[width=0.2\linewidth]{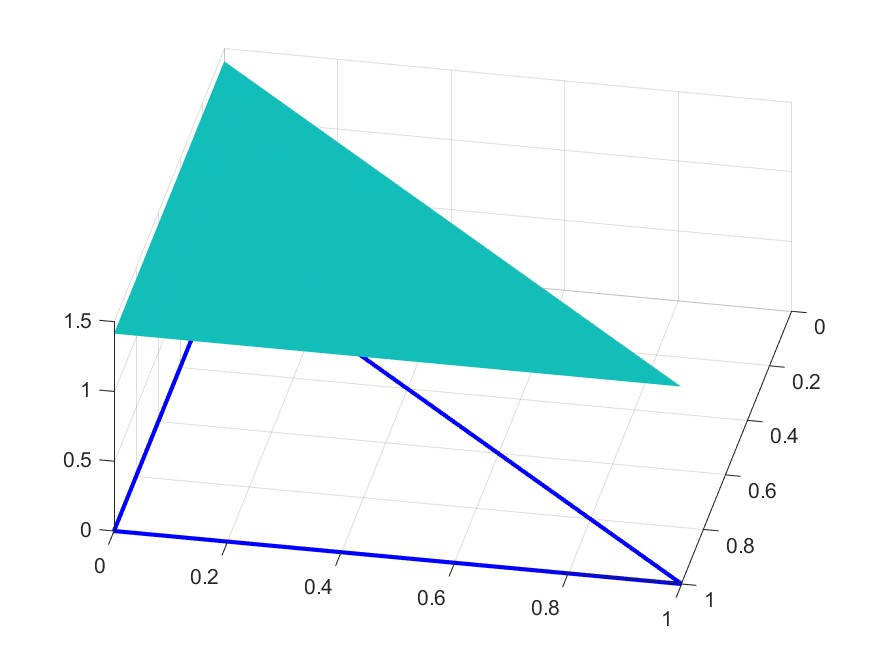}&
\includegraphics[width=0.2\linewidth]{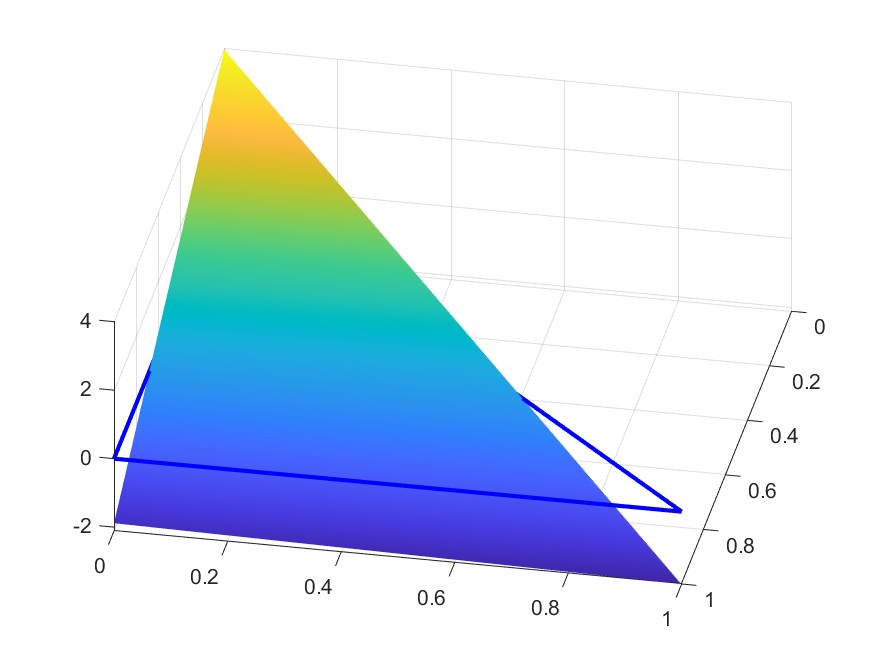}&
\includegraphics[width=0.2\linewidth]{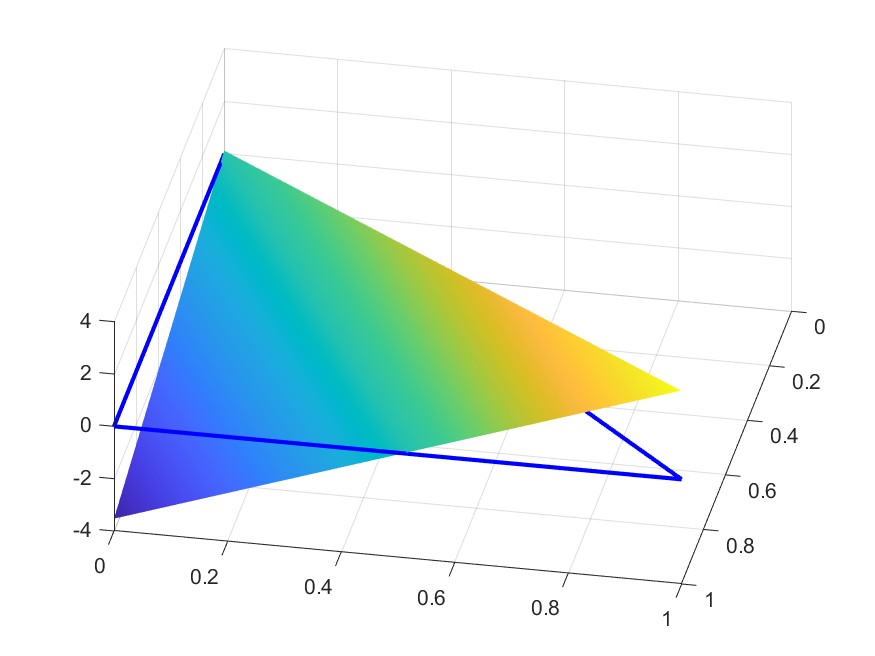}&
\includegraphics[width=0.2\linewidth]{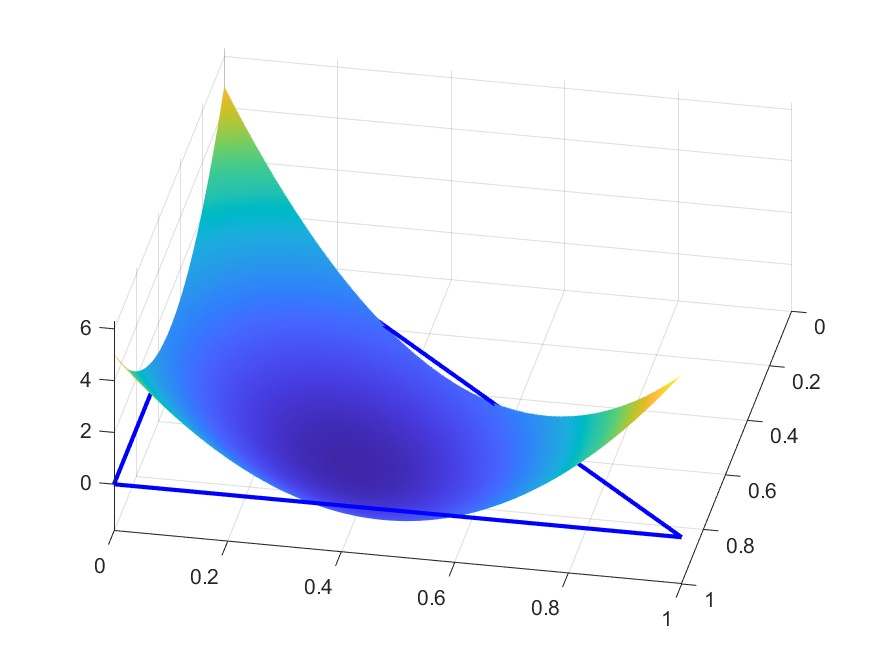}\\
\includegraphics[width=0.2\linewidth]{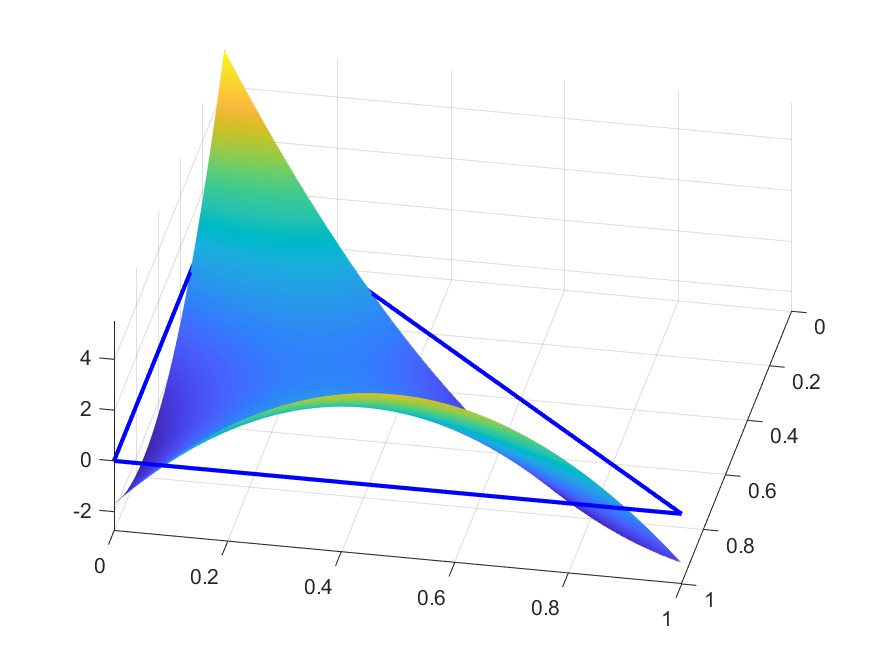}&
\includegraphics[width=0.2\linewidth]{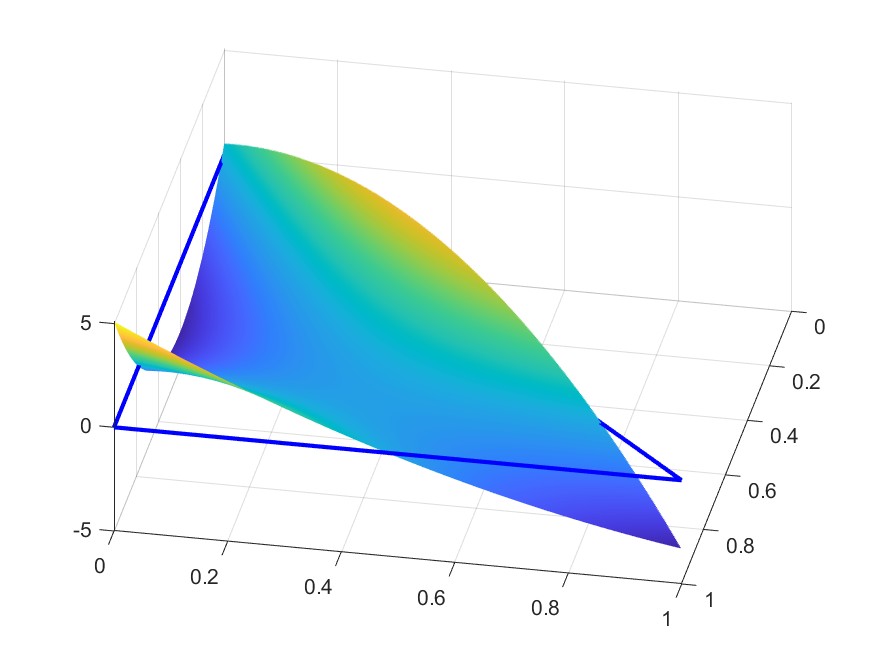}&
\includegraphics[width=0.2\linewidth]{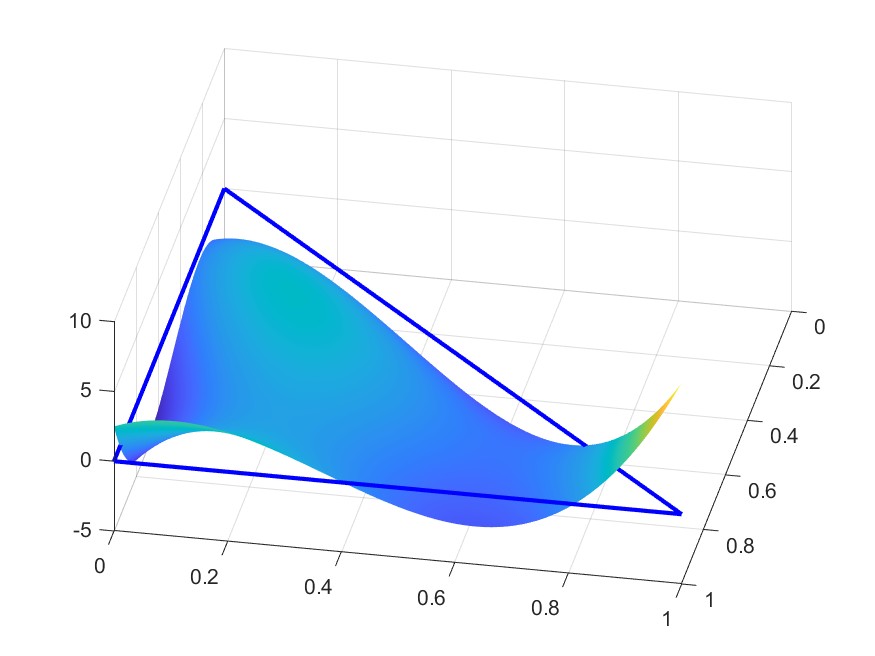}&
\includegraphics[width=0.2\linewidth]{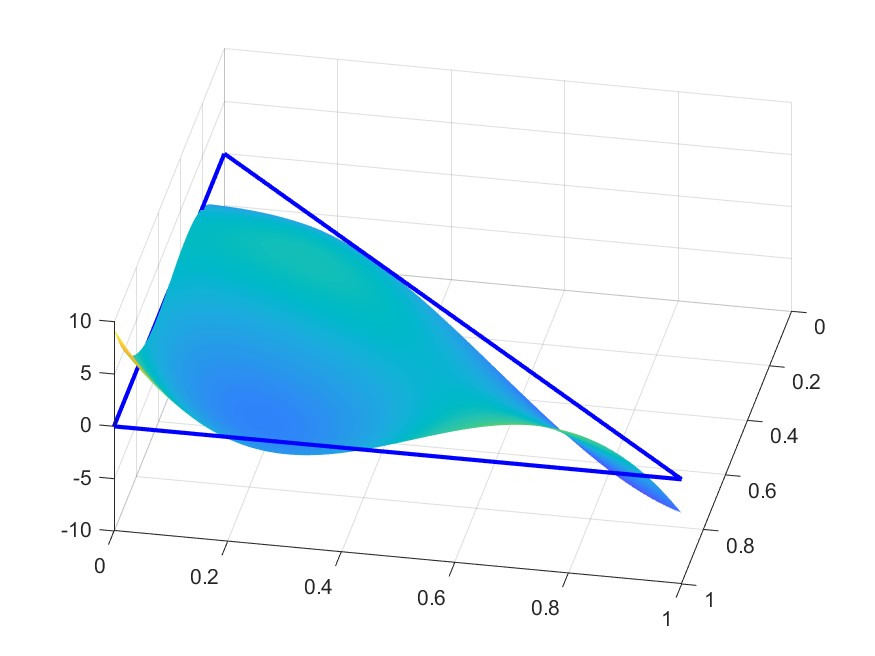}\\
\includegraphics[width=0.2\linewidth]{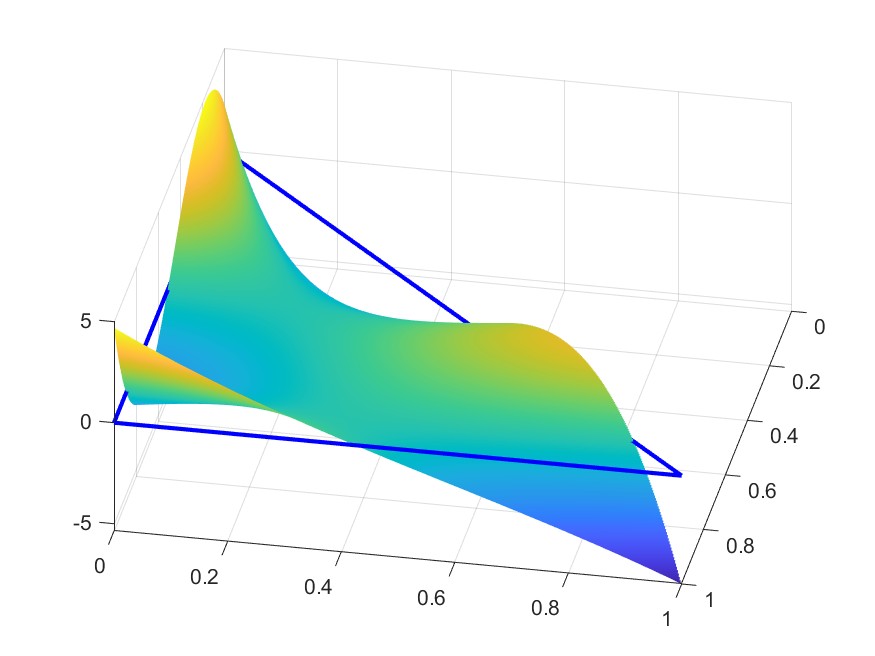}&
\includegraphics[width=0.2\linewidth]{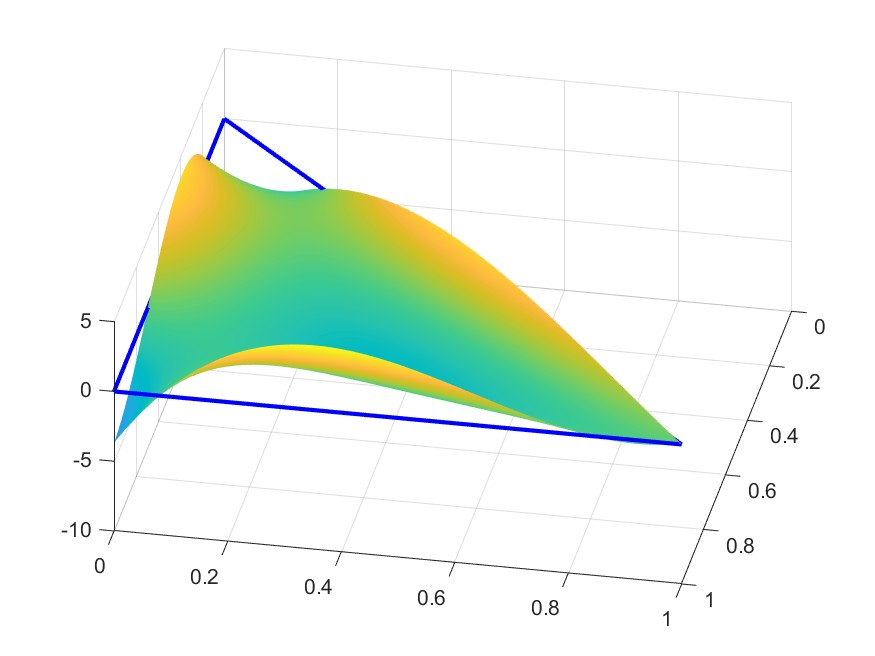}&
\end{tabular}
\caption{Orthonormal Polynomials of degrees $0, 1, 2, 3$ \label{Torthpoly13}}
\end{figure}

\begin{figure}[htpb]
\centering
	\begin{tabular}{cccc}
\includegraphics[width=0.2\linewidth]{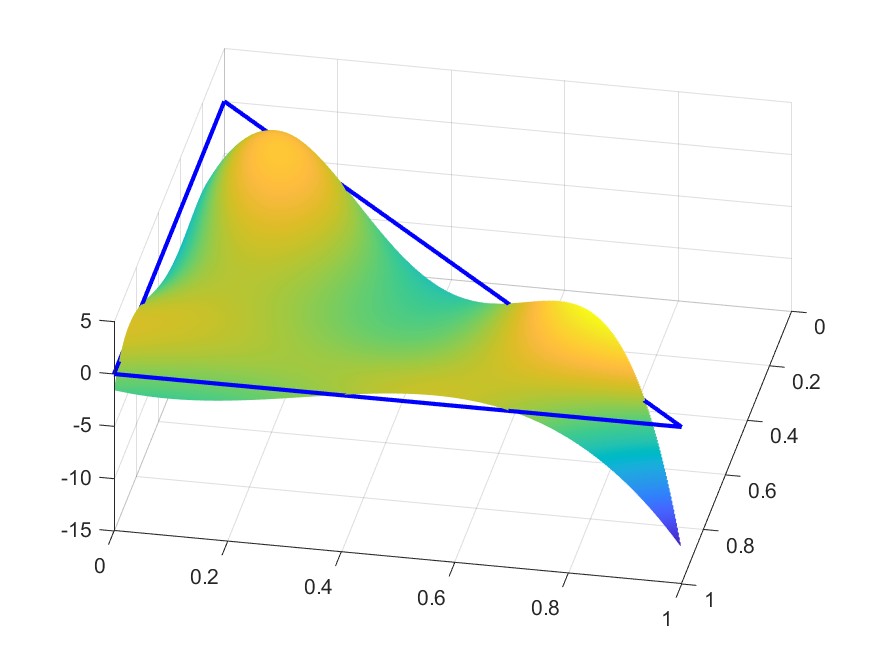}&
\includegraphics[width=0.2\linewidth]{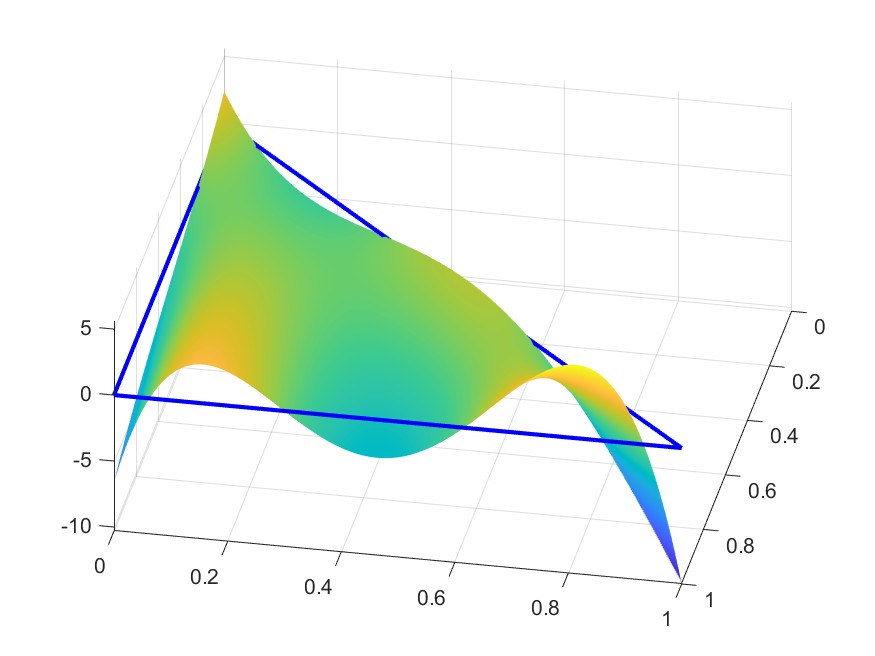}&
\includegraphics[width=0.2\linewidth]{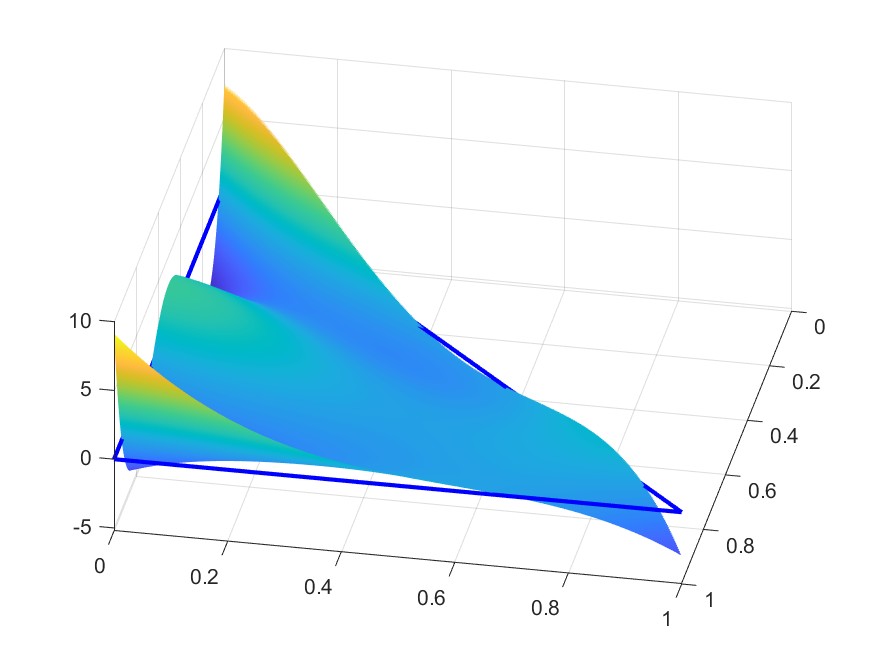}&
\includegraphics[width=0.2\linewidth]{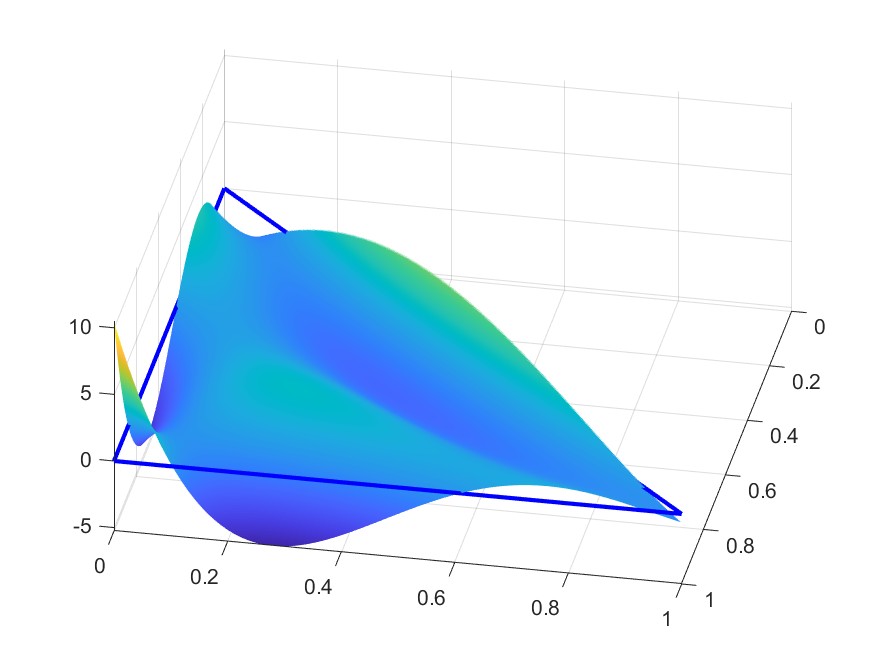}\\
\includegraphics[width=0.2\linewidth]{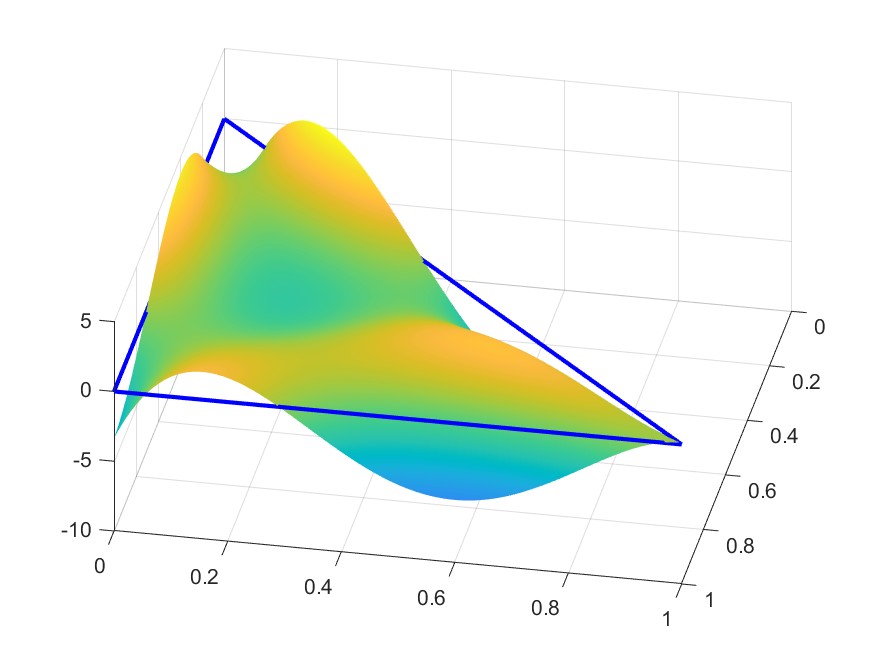}&
\includegraphics[width=0.2\linewidth]{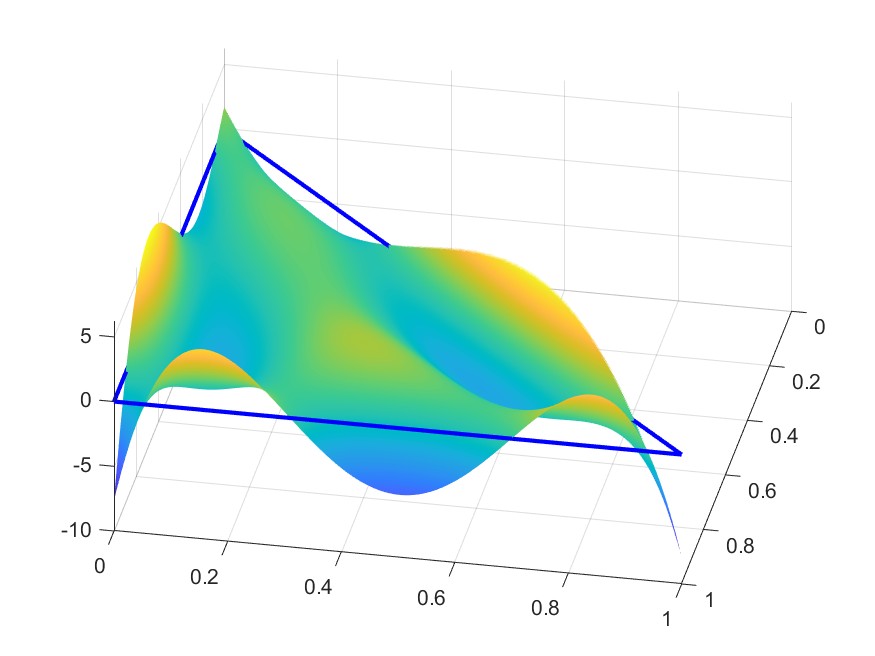}&
\includegraphics[width=0.2\linewidth]{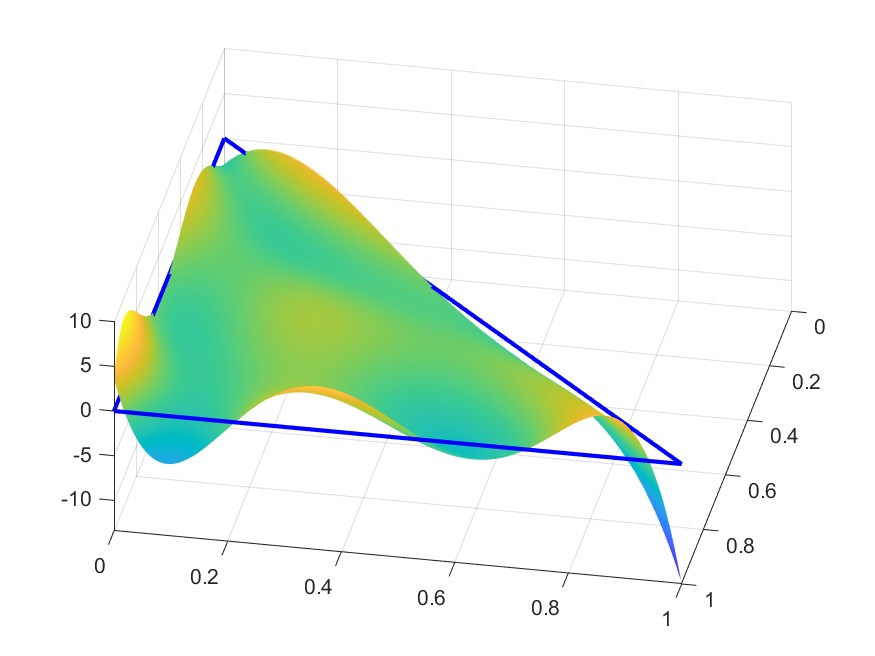}&
\includegraphics[width=0.2\linewidth]{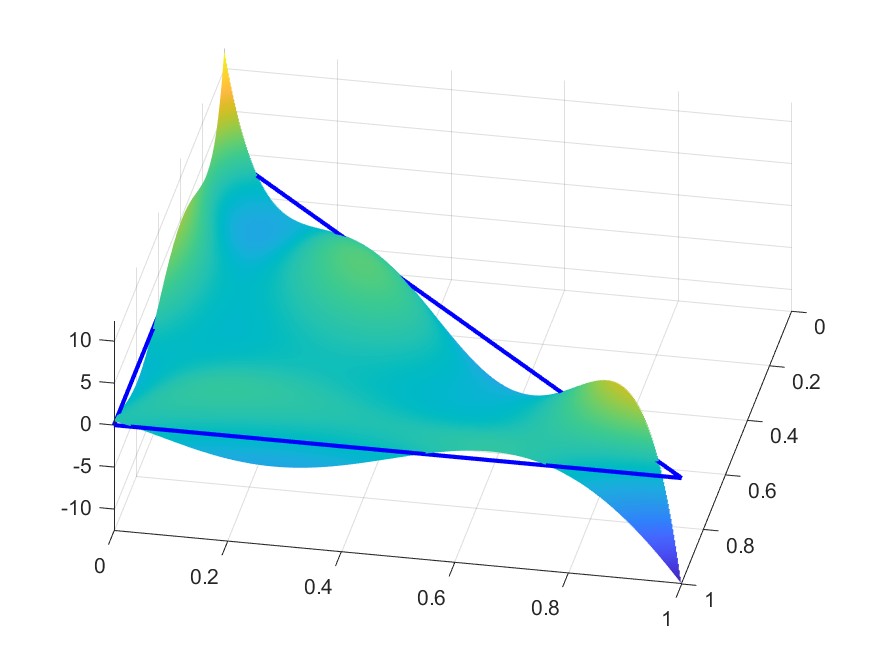}\\
\includegraphics[width=0.2\linewidth]{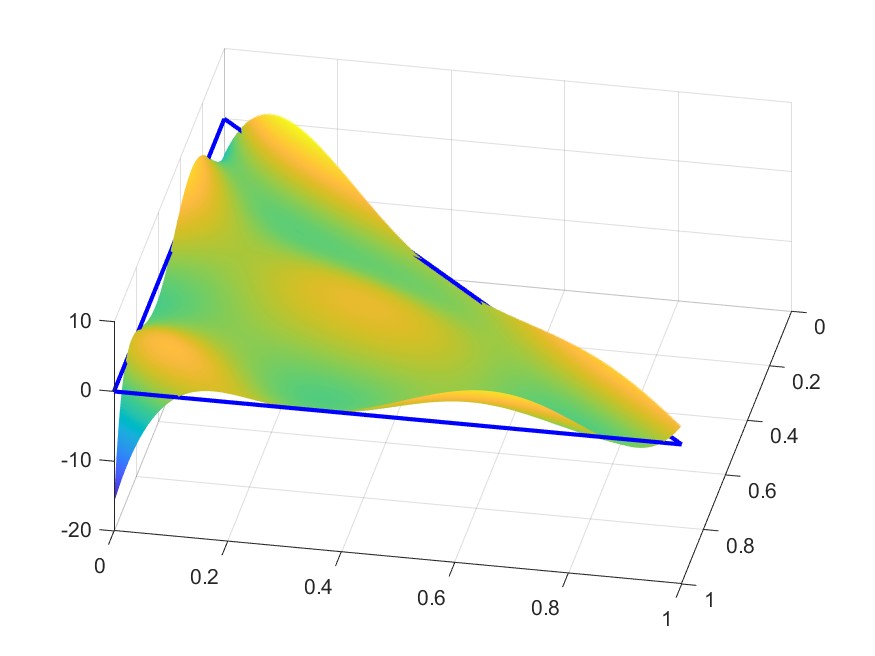}&
\includegraphics[width=0.2\linewidth]{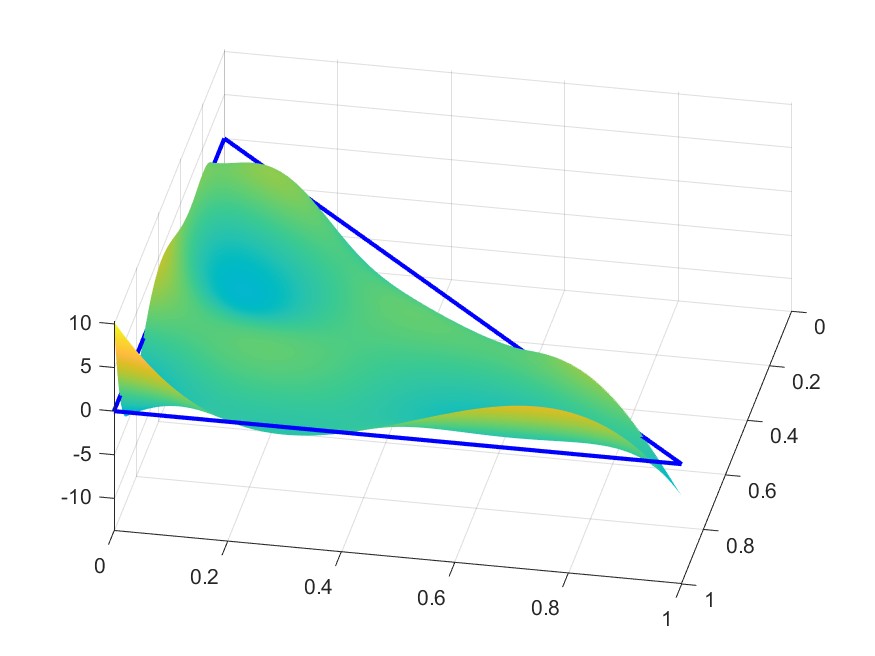}&
\includegraphics[width=0.2\linewidth]{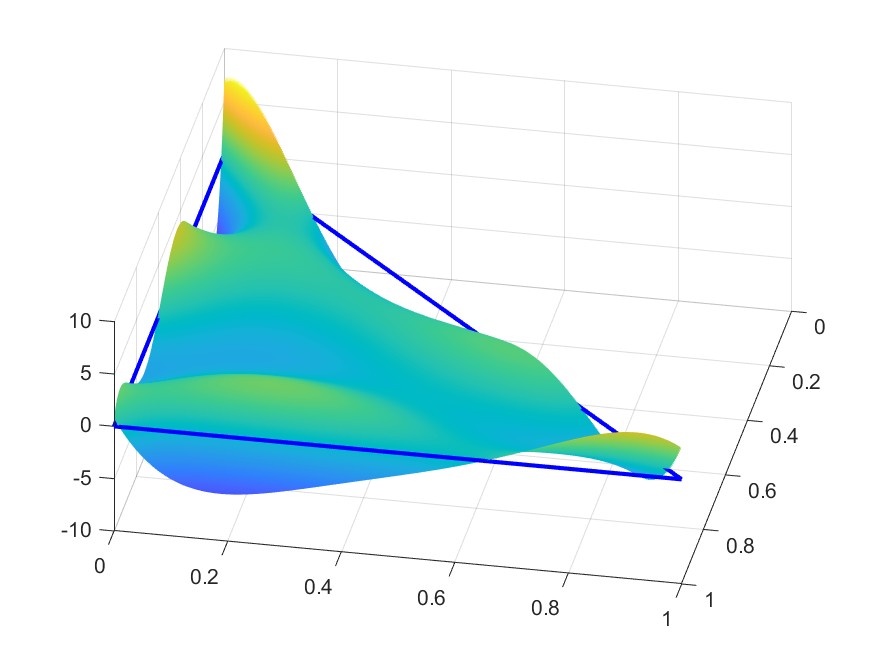}&
\end{tabular}
\caption{Orthonormal Polynomials of degrees $4$ and $5$ \label{Torthpoly45}}
\end{figure} 
\end{example} 

\begin{example}
Let us consider the standard domain $[-1, 1]^2$. The orthonormal polynomials of 
degree $1$ are easy to obtain. $p_0(x,y)=1/2, p_1(x,y)=\sqrt{3}/2 x, p_2(x,y) 
=\sqrt{3}/2 y$.  The orthonormal polynomials of degree 2 are given in terms of their
coefficients below in the order of $1, x, y, x^2, xy, y^2$
\begin{equation*}
X=
\begin{bmatrix}
    35/116  &   0 &   0 &           -229/221 &     -2630/2241 &   557/4251 \cr  
 -171/737 &   0   &   0 &        -598/879  &     1353/2242 & 373/271  \cr  
      88/127 & 0  &   0 &       -1325/1173 &      517/725 &   -3435/3619  \cr
      \end{bmatrix}
   \end{equation*}
 A MATLAB code  to check whether these 6 polynomials are orthonormal to each other is given 
 below.
 \begin{verbatim}
 >> syms x y fname1 fname2 fname11 fname12 fname13 ff
>> fname1=@(x,y) sqrt(3/2)*x;
>> fname2=@(x,y) sqrt(3/2)*y;
>> fname11=@(x,y) X(1,1)+X(1,4)*x.^2+X(1,5)*x.*y+X(1,6)*y.^2;
>> int(int(fname11,x,-1,1),y,-1,1)
 
ans =
 
13/13510798882111488
>> fname12=@(x,y) X(2,1)+X(2,4)*x.^2+X(2,5)*x.*y+X(2,6)*y.^2;
>> int(int(fname12,x,-1,1),y,-1,1)
 
ans =
 
-29/27021597764222976
 
>> fname13=@(x,y) X(3,1)+X(3,4)*x.^2+X(3,5)*x.*y+X(3,6)*y.^2;
>> int(int(fname13,x,-1,1),y,-1,1)
 
ans =
 
11/6755399441055744
 
>> ff=@(x,y) fname12(x,y).*fname11(x,y);
>> int(int(ff,x,-1,1),y,-1,1)
 
ans =
 
458907104604566899/7301667457314601352621010462965760
 
>> 458907104604566899/7301667457314601352621010462965760

ans =

   6.2850e-17

>> ff=@(x,y) fname12(x,y).*fname13(x,y);
>> int(int(ff,x,-1,1),y,-1,1)
 
ans =
 
-232997285764840411/3650833728657300676310505231482880
 
>> -232997285764840411/3650833728657300676310505231482880

ans =

  -6.3820e-17

>> ff=@(x,y) fname12(x,y).*fname1(x,y);
>> int(int(ff,x,-1,1),y,-1,1)
 
ans =
 
0
 
>> ff=@(x,y) fname13(x,y).*fname1(x,y);
>> int(int(ff,x,-1,1),y,-1,1)
 
ans =
 
0

 \end{verbatim}

However, there are many different sets of orthonormal polynomials of degree $1$. 
For example, $p_0(x,y)=1/2$ while  $p_1(x,y)
a_1x+ b_1 y$ and $p_2=a_2x + b_2 y$, where $[a_1,b_1]$ is the first 
row and $[a_2,b_2]$ is the second row.  
\begin{eqnarray}
\label{opd1a}
  &  0.119407995182024 & -0.857753887013408 \cr
  & -0.857753887013408 & -0.119407995182024 
\end{eqnarray}
One can easily check that $p_0, p_1, p_2$ are orthonormal to each other over $[-1, 1]^2$ 
which have been done by the author by using MATLAB commands similar to the one above.  

Another set of linear orthonormal polynomials are $p_0(x,y)=a_0+b_0 x+ c_0 y;
p_1(x,y)=a_1+b_1x+ c_1y$ and $p_2(x,y)= a_2 + b_2 x+ c_2 y$ with coefficients 
$a_0,b_0,c_0$; $a_1,b_1, c_1$; and $a_2, b_2, c_2$ given in the following data array:
\begin{eqnarray*}
\label{opd1b}
0.736626476725820 &  1.067183625864925 &  0.482973508637176 \cr
0.191228098068285 &  0.356328591835220 & -1.662325292524989\cr
0.648701201090063 & -1.316870928878955 & -0.058405580096484\cr
\end{eqnarray*}
The orthonormality of these linear polynomials are verified by the author with accuracy 
$1e-15$ in MATLAB. 
\end{example}

\begin{example}
Let us consider a polygon $[0, 1]^2$. The orthonormal polynomials of degree $1$ are 
given as follows. The first one is $p_0=1$ defined over $[0, 1]^2$. Write $p_1(x,y)=
a_1+b_1 x+ c_1 y$ and $p_2=a_2+ b_2 x+ c_2 y$, where $[a_1,b_1,c_1]$ is the first 
row and $[a_2,b_2,c_2]$ is the second row.  
\begin{eqnarray}
\label{opd1}
1.476691783662768 &  0.477631980728096 & -3.431015548053630\\
   1.954323764390864 & -3.431015548053631 & -0.477631980728095
\end{eqnarray}
One can easily check that $p_0, p_1, p_2$ are orthonormal to each other over $[0, 1]^2$. 
These polynomials will be used to generated quadrature to be given in a later example.  
\end{example}

\begin{example}
Consider a polygon $P$ which is an L-shaped domain \textcolor{blue}{with vertices $[0,0;2, 0;2, 1;1, 1;1, 2;0, 2]$ 
which is within in $[0, 2]^2$. }   
For degree $d=2$, 
	we use Algorithm~\ref{alg1} to find bivariate quadratic orthonormal polynomials. 
Their graphs were shown 
	 in Figure~\ref{quadraticop0}. We now present their coefficients in the power format 
in Table~\ref{tab9}. 
For example, $P1=	-0.27759+ 1.00778x+ 0.81207y -0.73939x^2+ 1.00649xy -0.83594y^2$. 
Note that these coefficients are chopped after 5 digits 
for the decimal part.  The numerical integration of their inner products of these 
polynomials based on polynomial values at $101\times 101$ equally-spaced points  of 
$[0, 2]^2$ gives about $1e-4$ error away from the exact orthonormality
	  $I_{6\times 6}$. 
	 
	\begin{table}[htpb]
		\centering
		\begin{tabular}{|l|c|c|c|c|c|c|} \hline 
		P1 &	-0.27759 & 1.00778 & 0.81207 & -0.73939 & 1.00649 & -0.83594\cr 
			\hline
			P2 &	0.57155 & 0.91303 & -0.54342 & -0.74757 & -0.28896 & -0.12163\cr 
			\hline
			P3 &	1.58044 & -0.21017 & -2.85523 & -0.19324 & 0.42186 & 1.31931\cr 
			\hline
			P4 &	0.11336 & 1.01927 & -2.61386 & -0.25244 & 0.35354 & 1.14126\cr 
			\hline
			P5 &	1.49075 & -3.02600 & -0.27774 & 1.51963 & 0.26120 & -0.21360\cr 
			\hline
			P6 &	2.24632 & -3.74764 & -3.18077 & 0.99871 & 2.57303 & 0.84324\cr 
			\hline
		\end{tabular}
		\caption{Coefficients(in rows) of six quadratic orthonormal polynomials \label{tab9}}
		\end{table} 
\end{example}

\begin{example}
Consider a polygon $P$ which is an L-shaped domain.  For degree $d=3$, 
we again use Algorithm~\ref{alg1} in the previous section 
to find orthonormal polynomials. Let us present
their graphs  in Figure~ \ref{cubicop2}.   
Their coefficients in the power format are given in Table~\ref{tab0}. We only show their 
coefficients in 5 digit chopping. 

\begin{table}[htpb]
	\centering
\scriptsize 
	\begin{tabular}{|l|c|c|c|c|c|c|c|c|c|c|} \hline 
	P1 &-0.41821 & 2.23299 & 1.68804 & -2.34761 & 1.26810 & -2.37918 &0.71637 &-0.04421 
&-1.46598 &1.05805\cr 
\hline
	P2 &1.03950 & -3.61418 & -1.01216 & 5.01519 & 3.68612 & -1.26161 &-1.91878 &-1.34261 
&-1.17910 &0.78674\cr 
\hline
	P3 &-2.25562 & 9.10083 & 6.34619 & -10.00452 & -4.83676 & -6.64088 &2.98589 &1.34142 
&1.87440 &1.87401\cr 
\hline
	P4 &-1.12493 & 4.31404 & 4.34727 & -1.97392 & -11.92040 & -1.86506 &0.20965 &3.34780 
&3.78191 &0.03874\cr 
\hline
	P5 &2.40821 & -5.22071 & -1.97223 & 2.33479 & 5.03458 & -0.73131 &-0.40797 &-0.36993 
&-2.21076 &0.79109\cr 
\hline
	P6 &2.41601 & -1.95840 & -7.34403 & 0.24409 & 3.59120 & 7.87856 &0.13064 &-0.89908 
&-0.74196 &-2.73171\cr 
\hline
	P7 &-0.21502 & 6.63545 & -5.47597 & -7.05251 & -0.86886 & 5.21139 &1.85168 &2.14921 &-1.39347 &-1.16061\cr 
\hline
	P8 &0.62764 & 0.60790 & -2.34192 & 0.63250 & -3.70790 & 2.68535 &-0.77545 &2.25788 &0.88506 &-0.66557\cr 
\hline
	P9 &2.42865 & -4.96796 & -3.20560 & 2.44912 & 5.39874 & -0.25506 &0.00987 &-2.51434 &-0.20070 &0.54400\cr 
\hline
	P10 &1.36544 & -0.76606 & -7.32284 & -1.31750 & 7.59777 & 6.79411 &0.83367 &-2.63263 &-3.02784 &-1.62395\cr 
\hline
	\end{tabular}
\caption{Coefficients(in rows) of 10 cubic orthonormal polynomials \label{tab0}}
\end{table} 		
 
\begin{figure}[htpb]
\centering
	\begin{tabular}{cccc}
\includegraphics[width=0.2\linewidth]{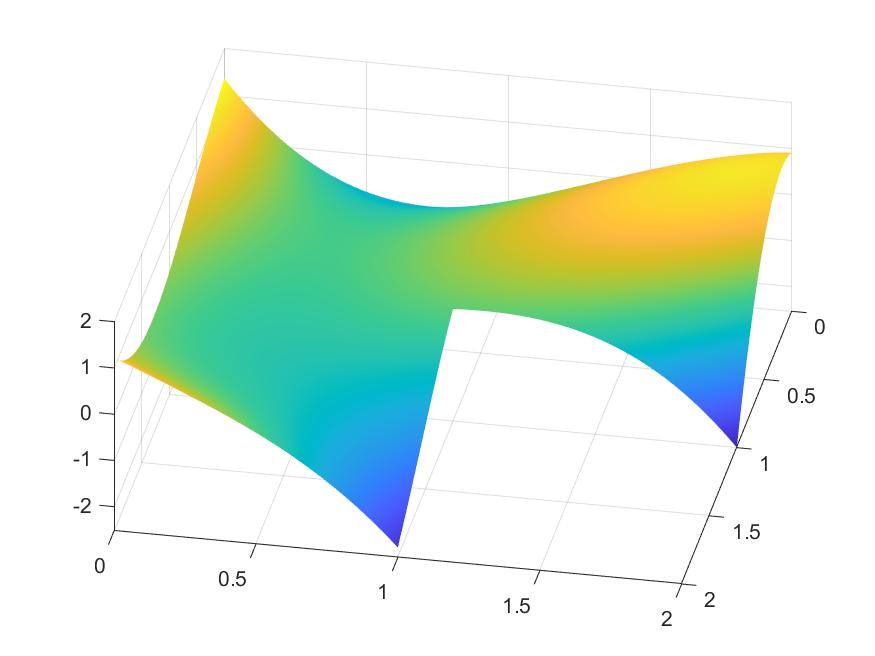}&
\includegraphics[width=0.2\linewidth]{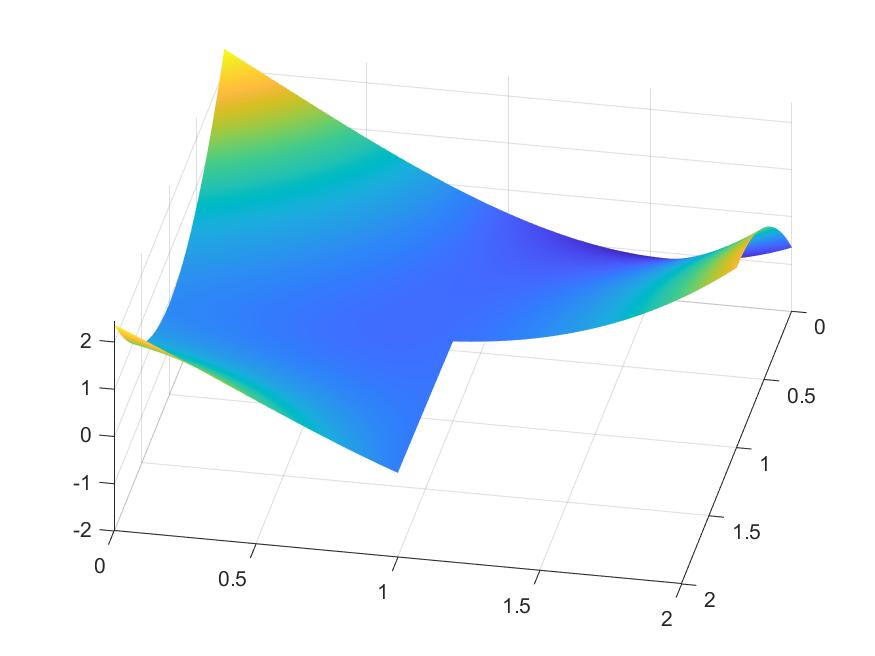}&
\includegraphics[width=0.2\linewidth]{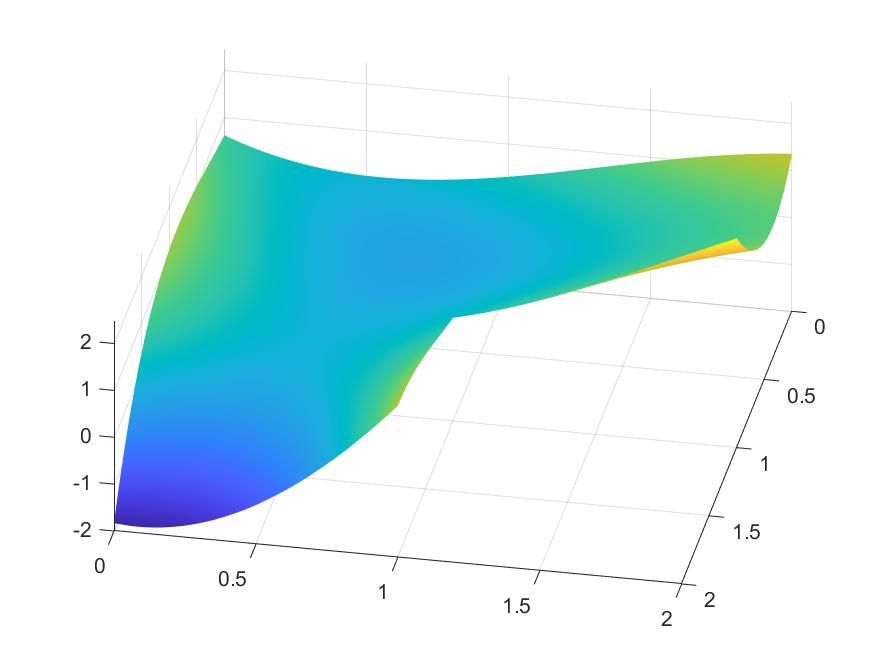}&
\includegraphics[width=0.2\linewidth]{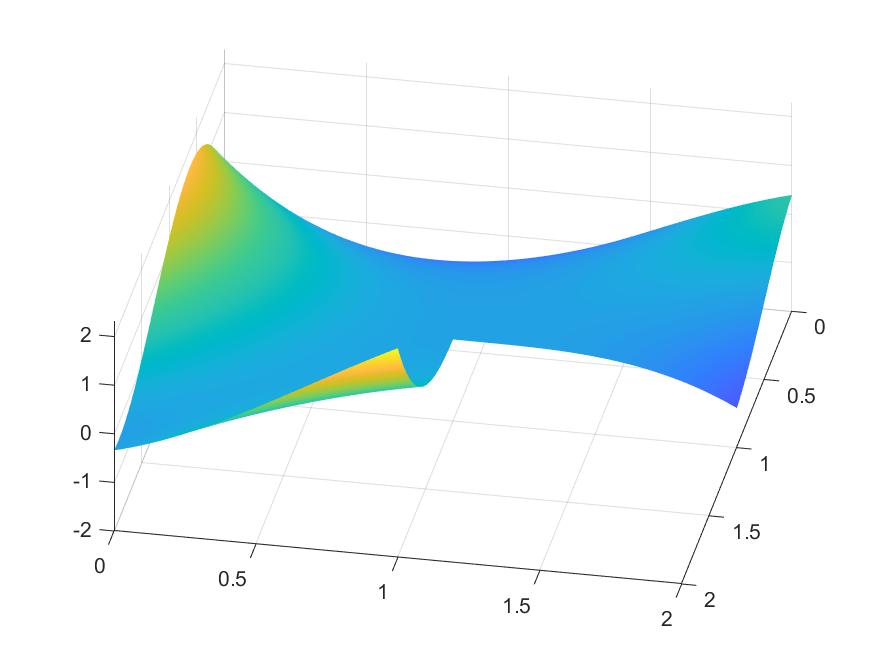}\\
\includegraphics[width=0.2\linewidth]{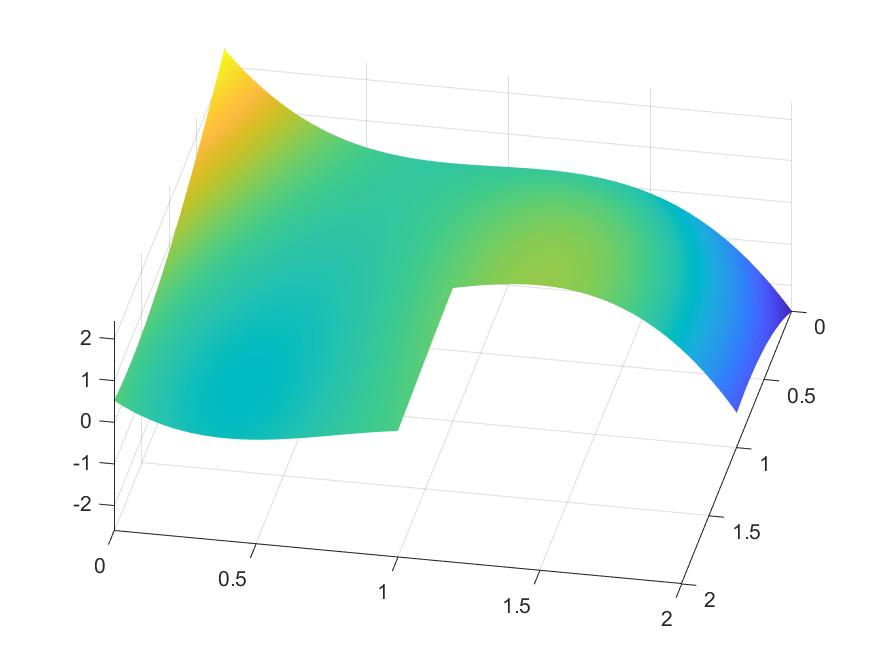}&
\includegraphics[width=0.2\linewidth]{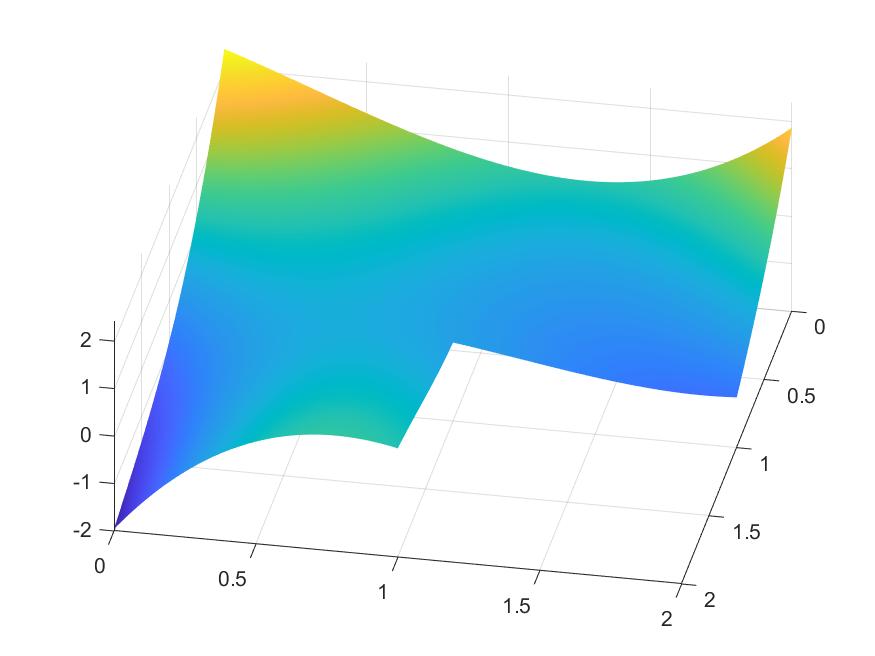}&
%
\includegraphics[width=0.2\linewidth]{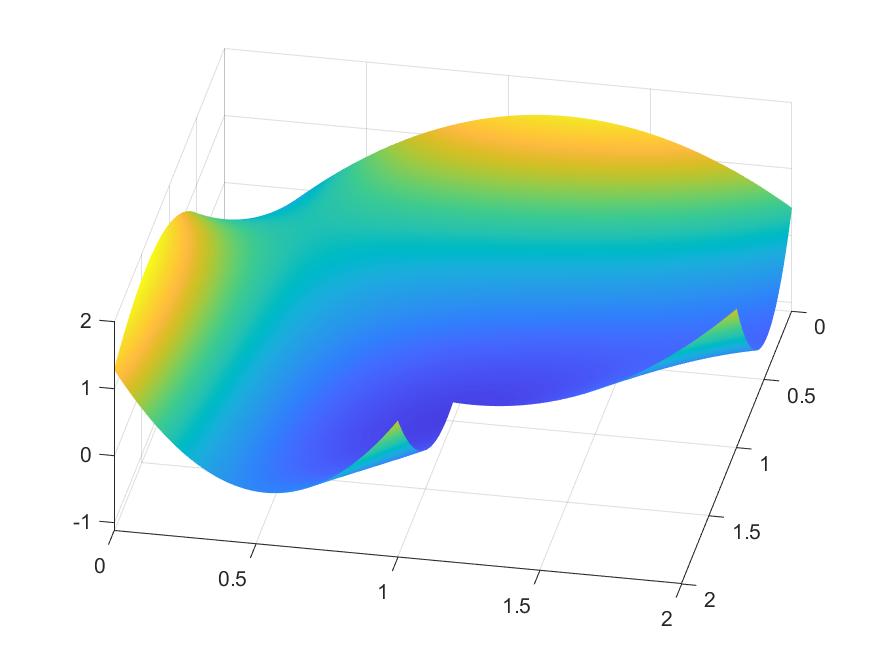}&
\includegraphics[width=0.2\linewidth]{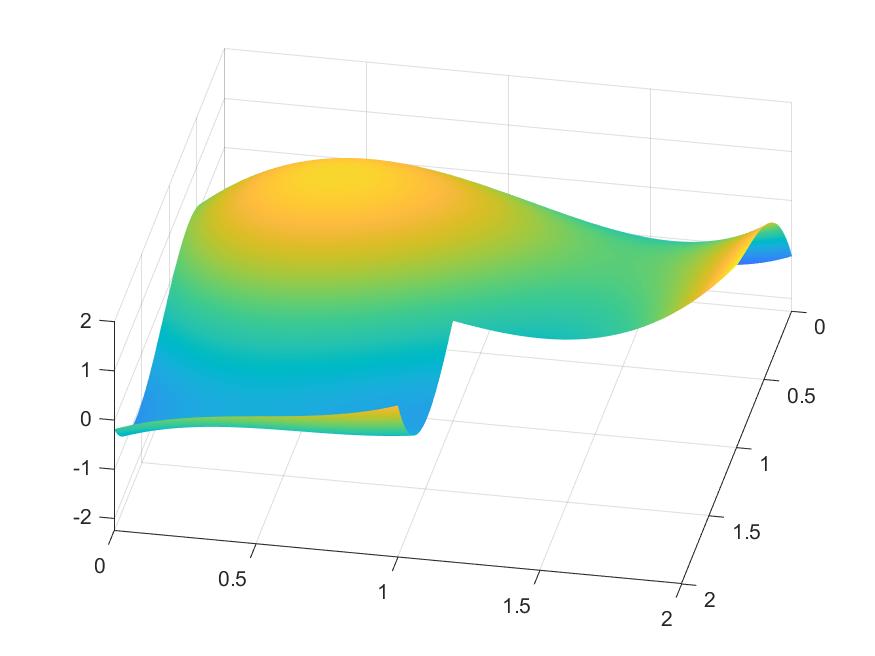}\\
\includegraphics[width=0.2\linewidth]{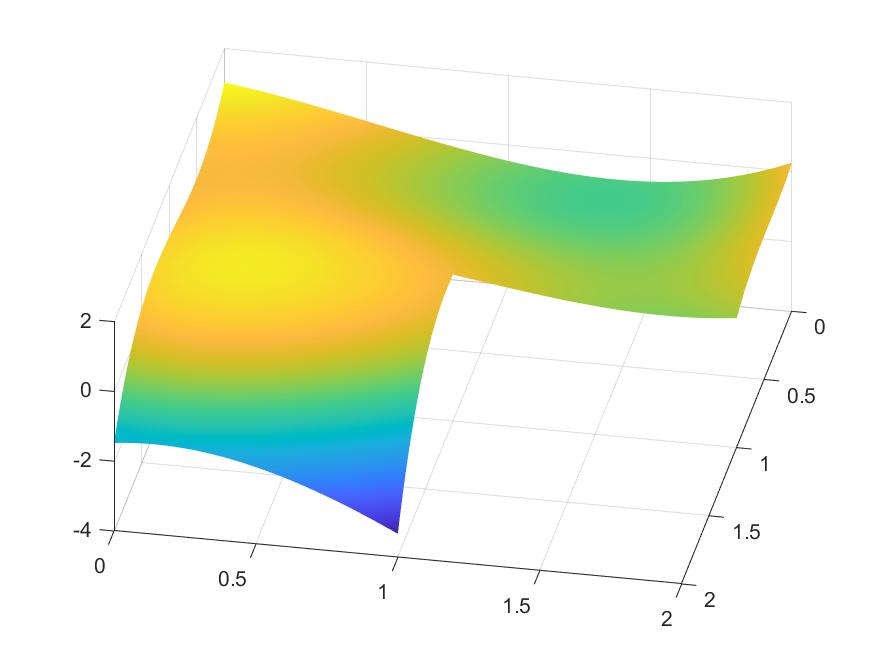}&
\includegraphics[width=0.2\linewidth]{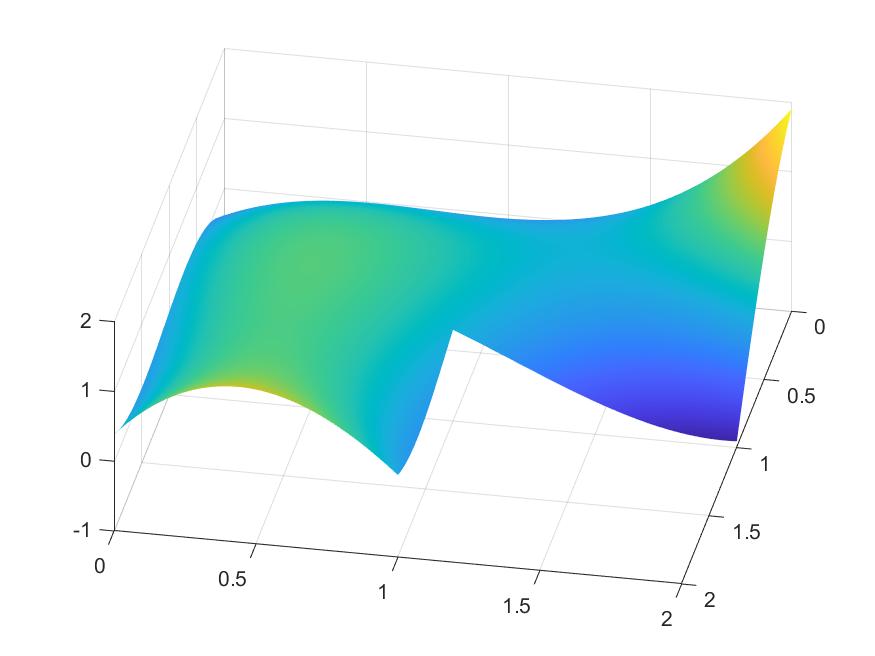}&
\end{tabular}
\caption{Cubic Orthogonal Polynomials over L-shaped Domain (Continued) \label{cubicop2}}
\end{figure}

\end{example}

\begin{example}
	Consider a polygon $P$ which is an L-shaped domain again.  For degree $d=4$, 
	we find bivariate quartic orthonormal polynomials by using Algorithm~\ref{alg1}. 
Let us present	their graphs in Figure~\ref{quaticop}.  
	
	\begin{figure}[htpb]
	\centering
		\begin{tabular}{ccc}
		\includegraphics[width=0.2\linewidth]{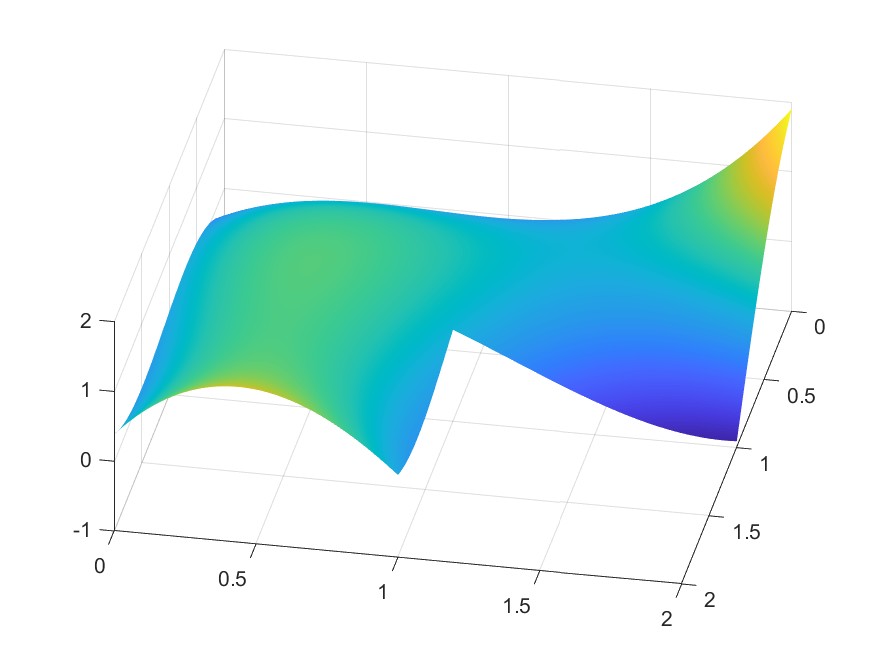} &
		\includegraphics[width=0.2\linewidth]{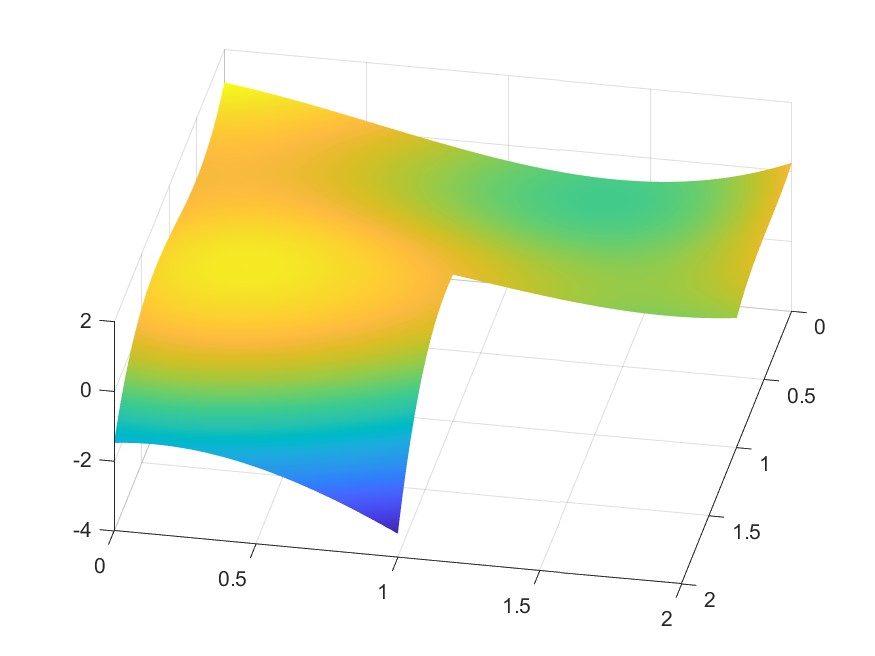} &
		\includegraphics[width=0.2\linewidth]{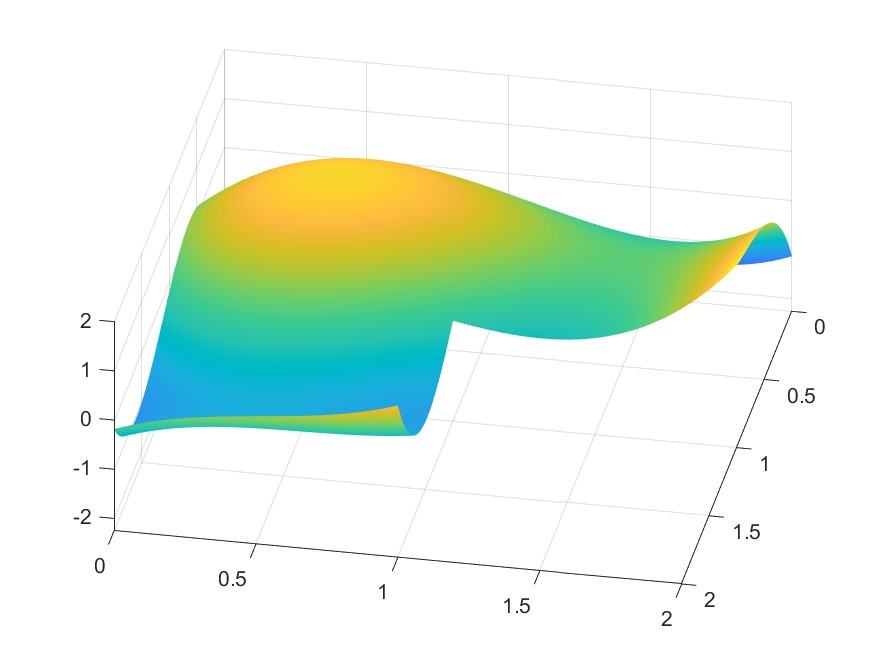}\cr
		\includegraphics[width=0.2\linewidth]{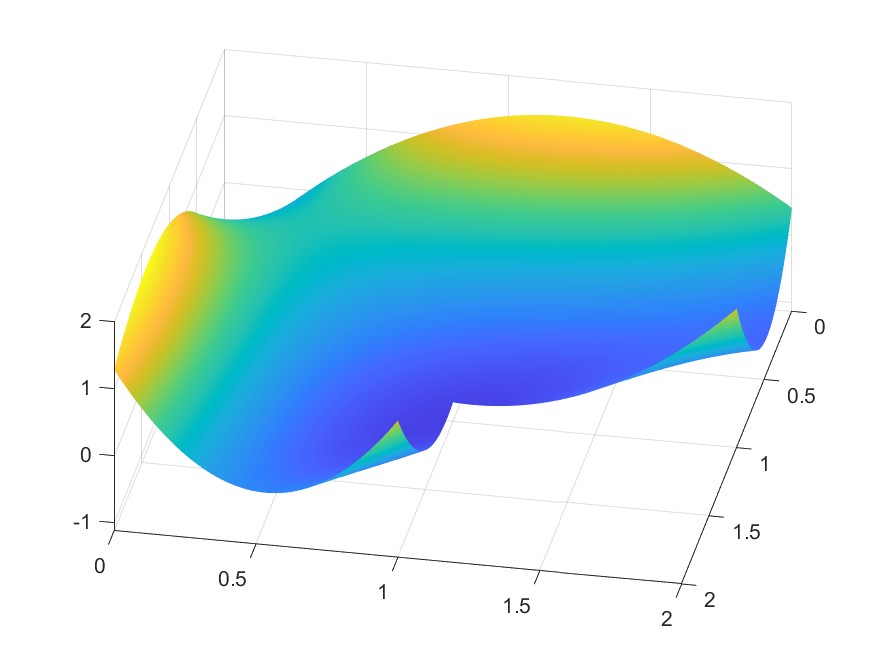} &
		\includegraphics[width=0.2\linewidth]{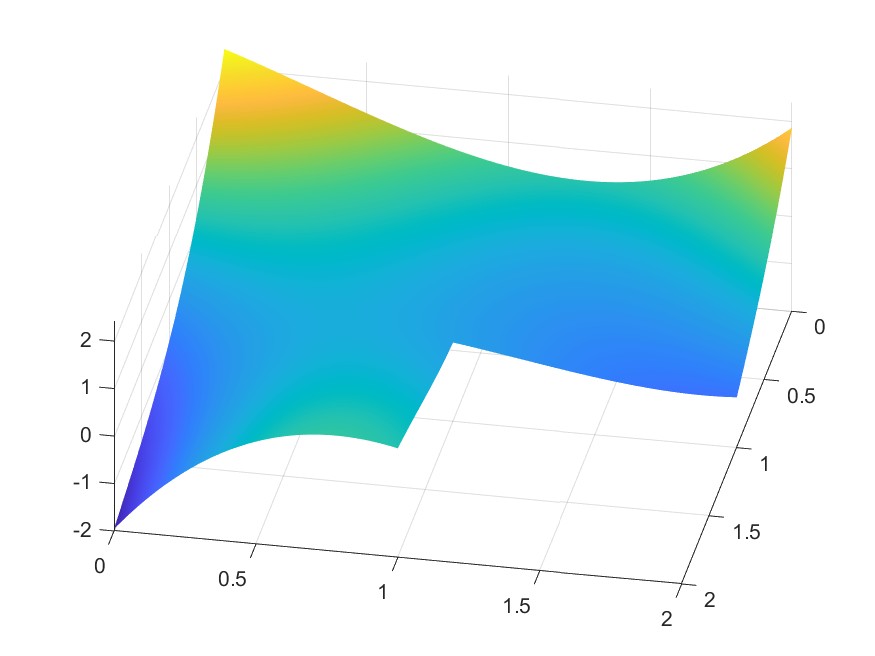} &
		\includegraphics[width=0.2\linewidth]{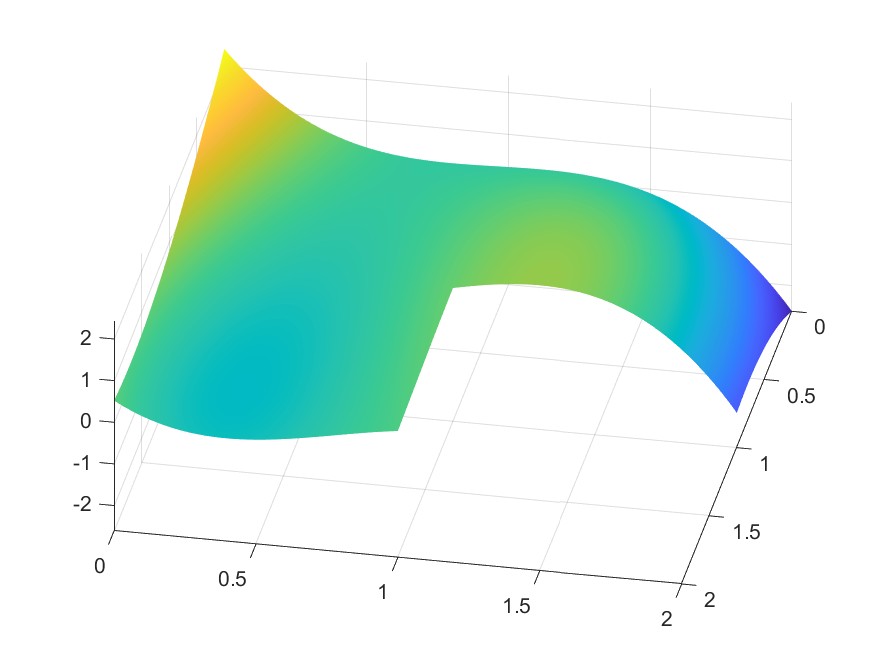}\cr
	 \includegraphics[width=0.2\linewidth]{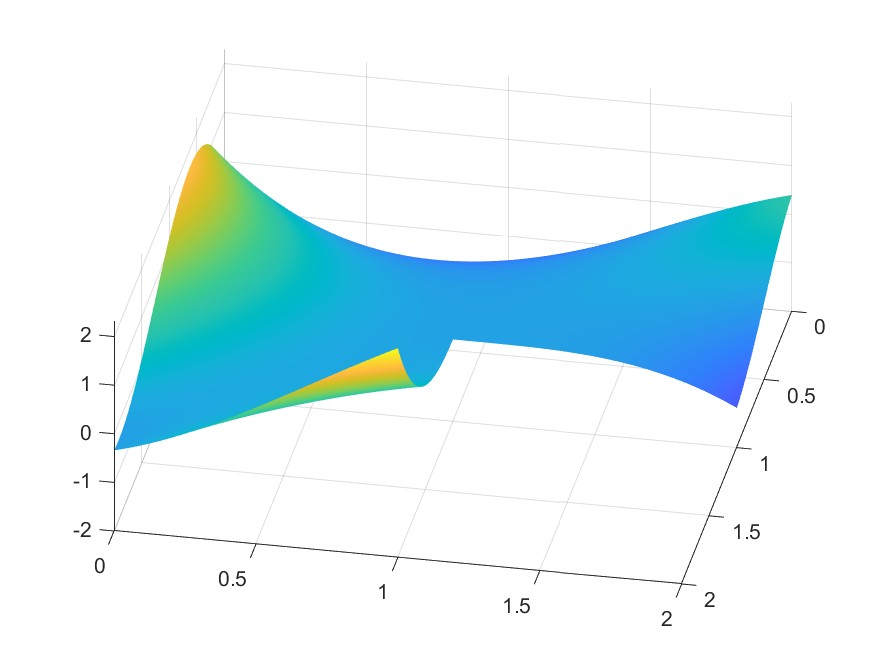} &
		\includegraphics[width=0.2\linewidth]{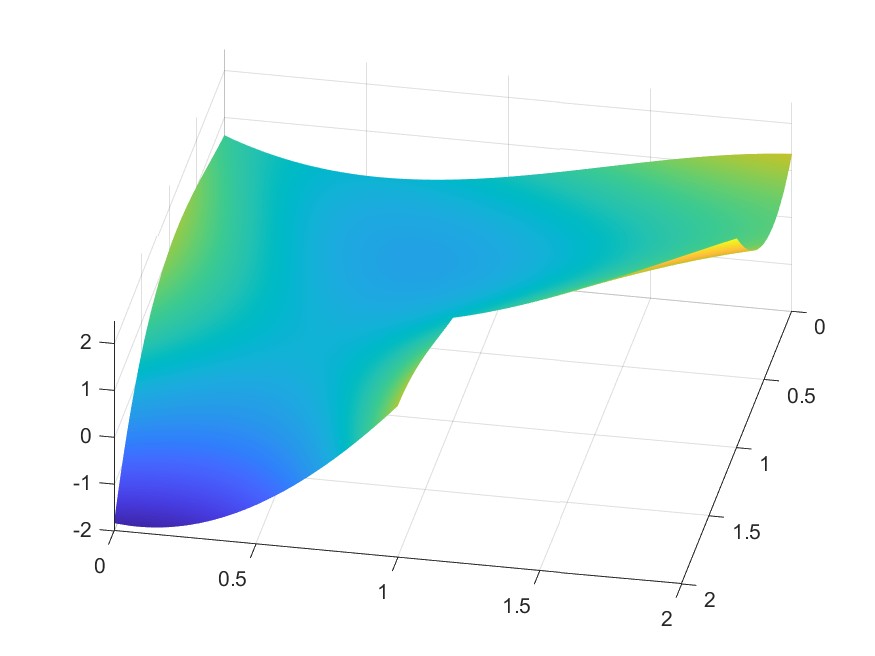}&
		\includegraphics[width=0.2\linewidth]{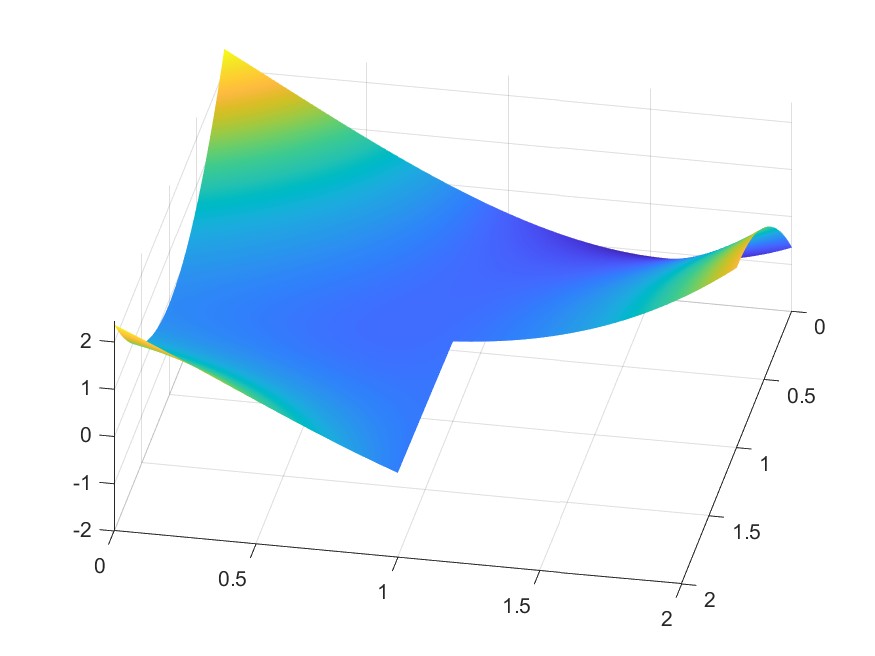} \cr
		\includegraphics[width=0.2\linewidth]{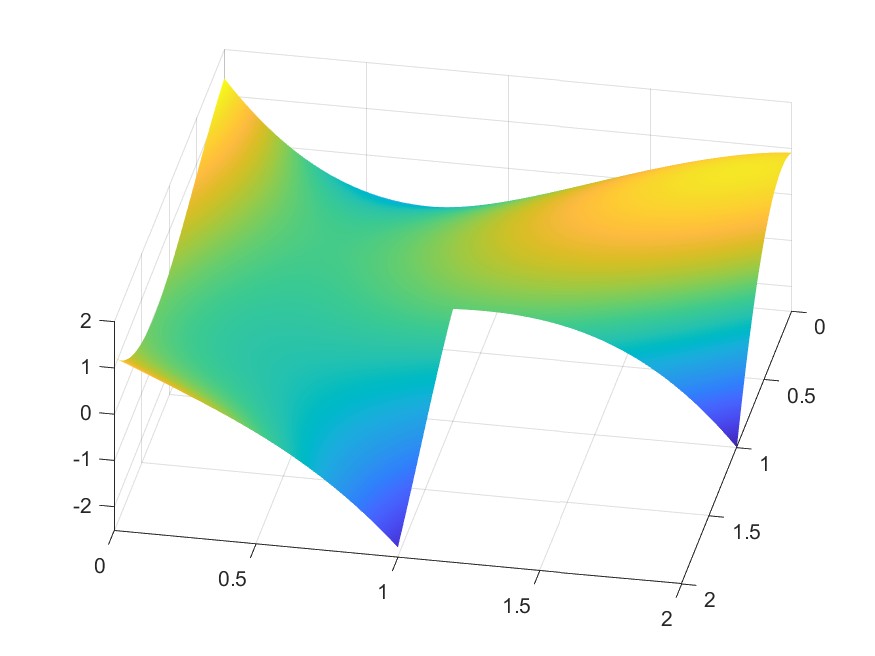}&
		\includegraphics[width=0.2\linewidth]{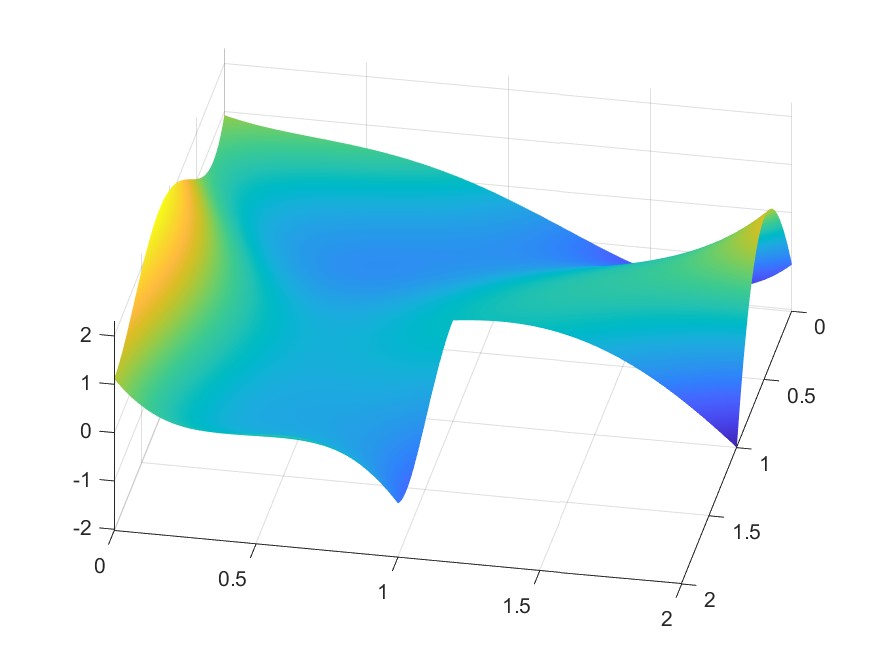}&
		\includegraphics[width=0.2\linewidth]{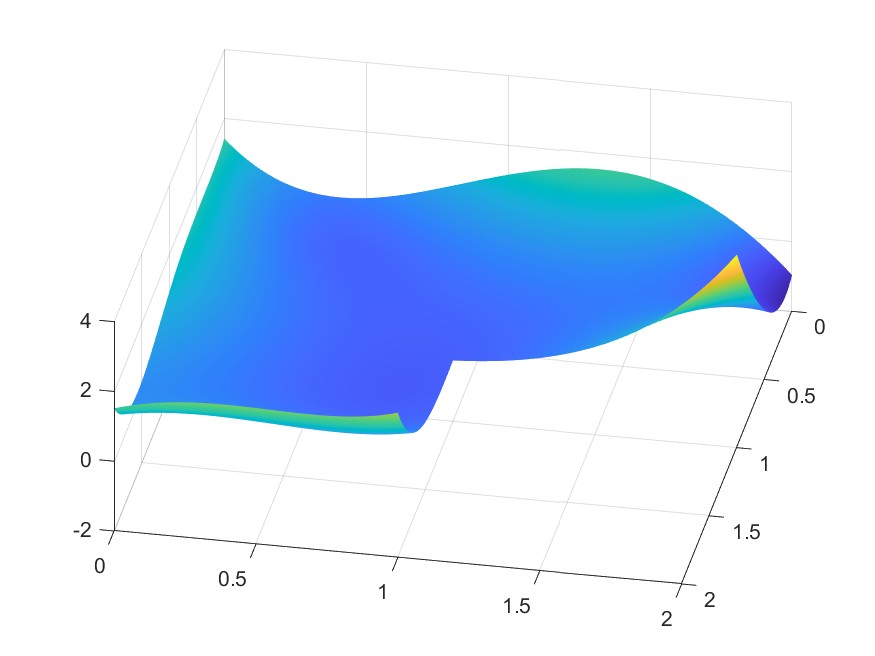}\cr
		\includegraphics[width=0.2\linewidth]{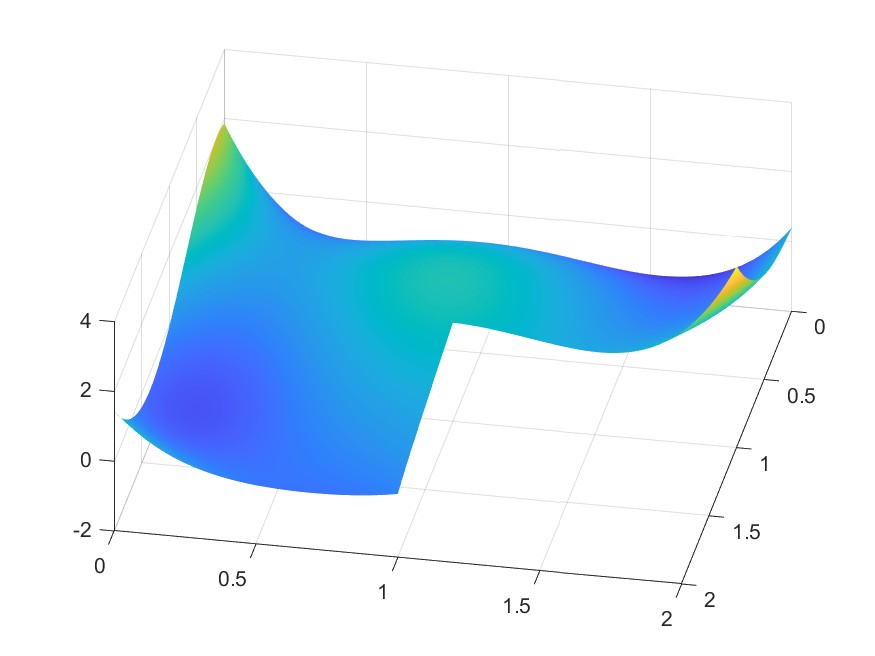}&
	\includegraphics[width=0.2\linewidth]{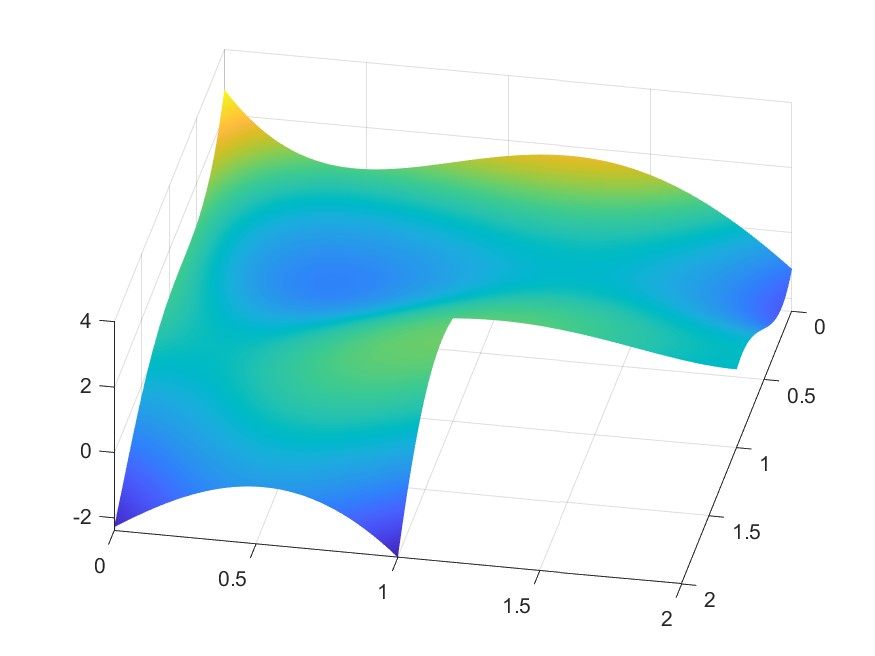}& 
	\includegraphics[width=0.2\linewidth]{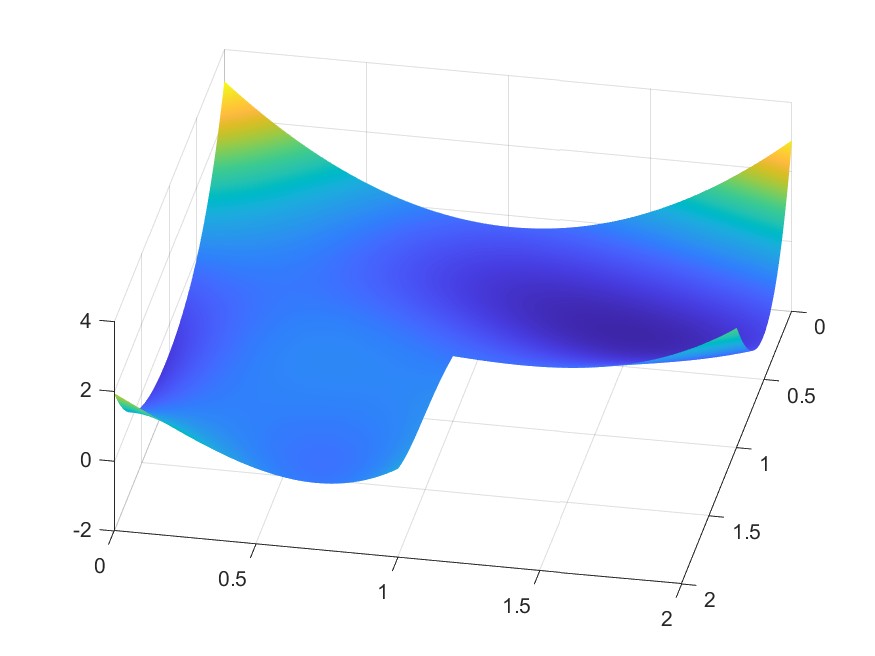}
	\end{tabular}
		\caption{Quartic Orthonormal Polynomials over L-shaped Domain \label{quaticop}}
	\end{figure}
\end{example}

When the polygon is not very complicated, 
the computation of these orthonormal polynomials is quick and easy.  
Only a few minutes are needed.  
When $P$ is complicated and $d\gg 1$, the computation can be long. 
Mainly, the major computational time is spent on the computation of 
the null space of $H_1$.  
Let us present orthonormal polynomials over a more complicated domain of interest. 

\begin{example}
Let us consider a domain which is a flower shape with one hole in the middle. We use 
Algorithm~\ref{alg1} to construct bivariate 
orthonormal polynomials of $d=3$.   Their graphs are presented 
in Figure~ \ref{OPflower2} together with the zero curves of these orthogonal polynomials.
	\begin{figure}[htpb]
	\begin{tabular}{ccc}
		\includegraphics[width=0.3\linewidth]{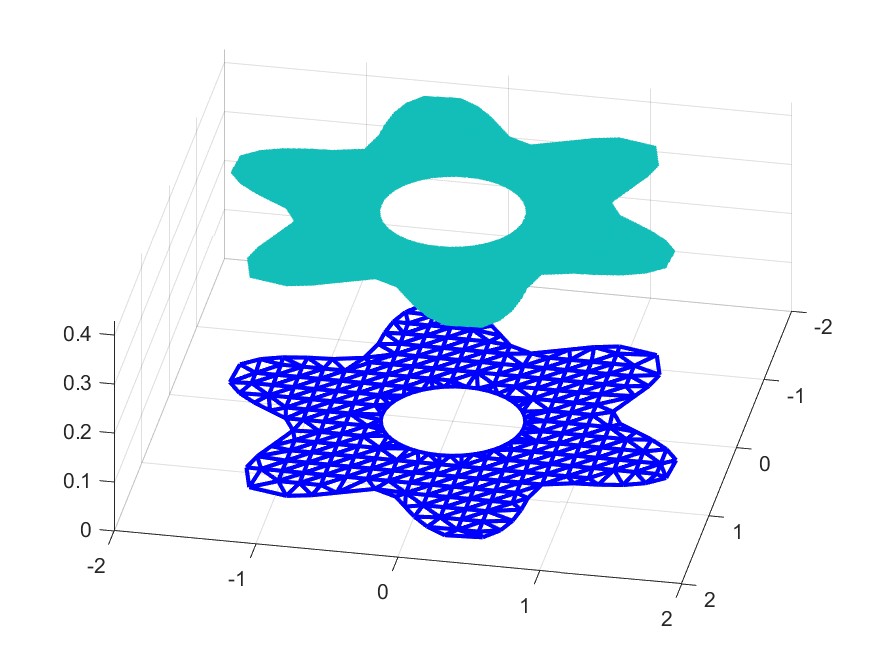} &
		\includegraphics[width=0.3\linewidth]{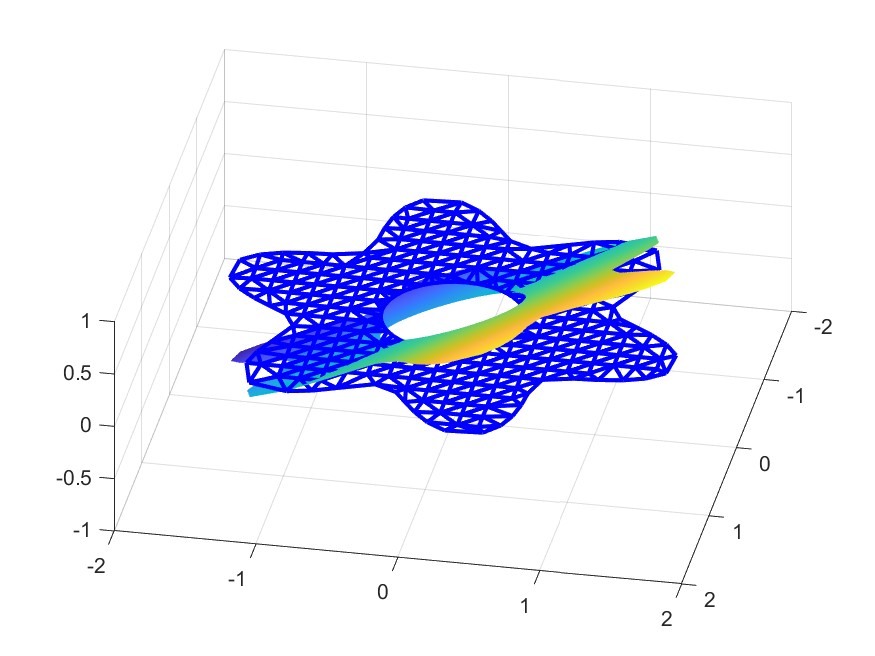} &
		\includegraphics[width=0.3\linewidth]{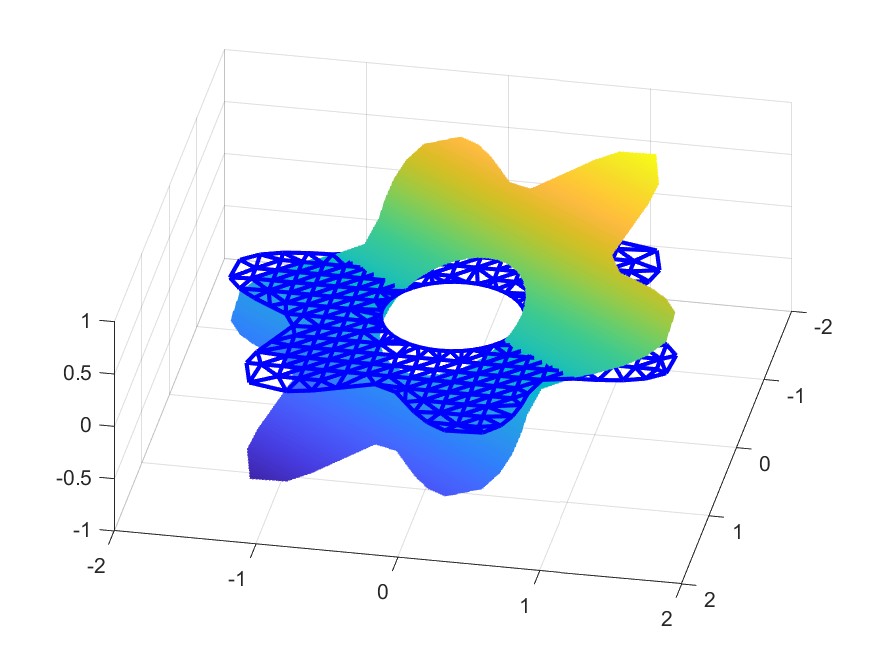}\\
		\includegraphics[width=0.3\linewidth]{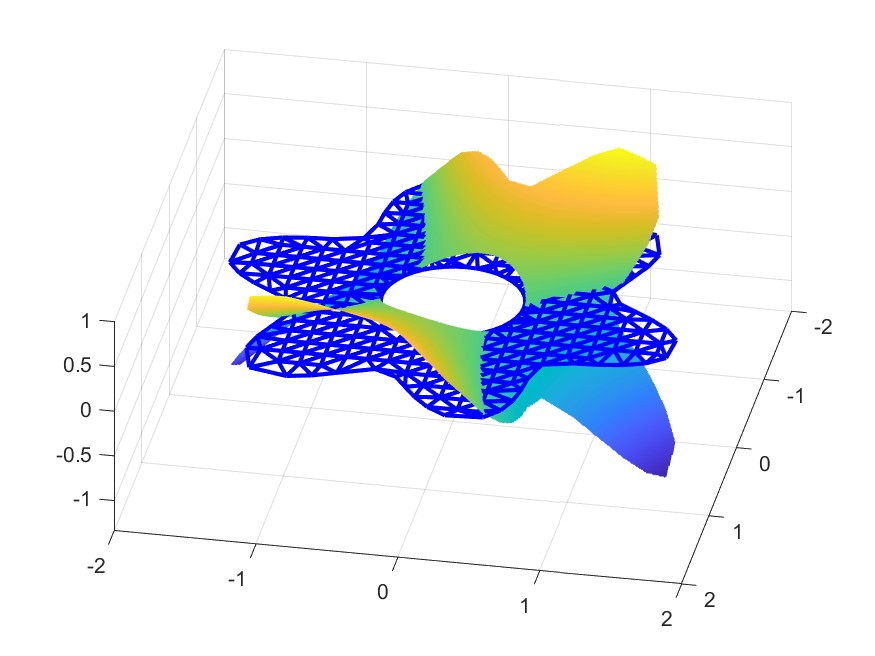} &
				\includegraphics[width=0.3\linewidth]{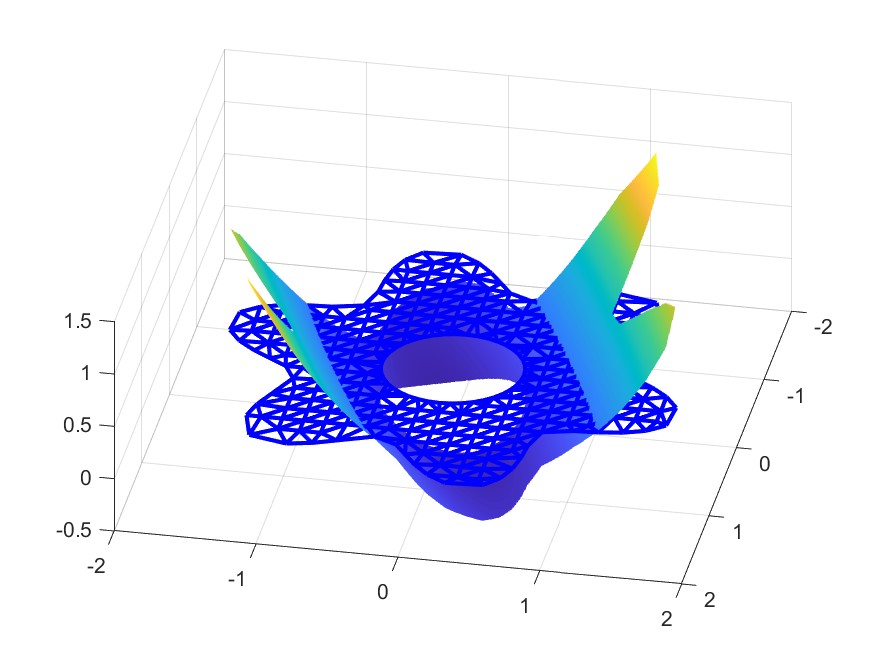} &
		\includegraphics[width=0.3\linewidth]{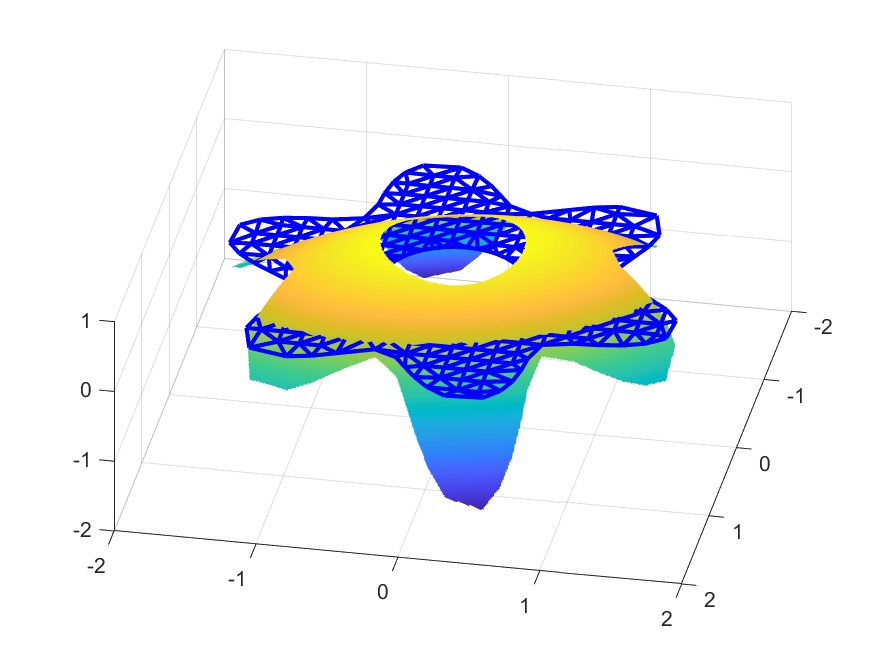}\\
		\includegraphics[width=0.3\linewidth]{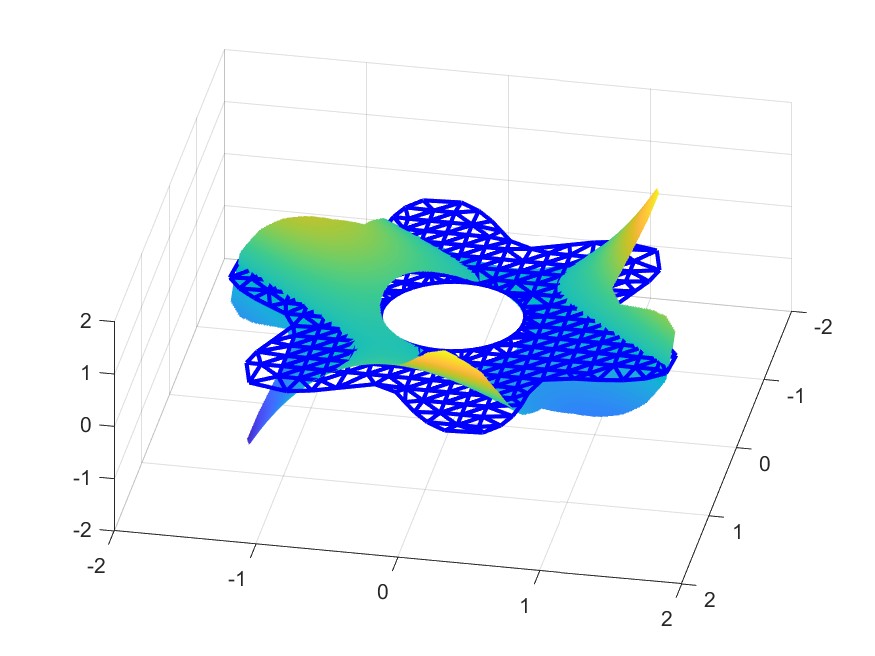}&
		\includegraphics[width=0.3\linewidth]{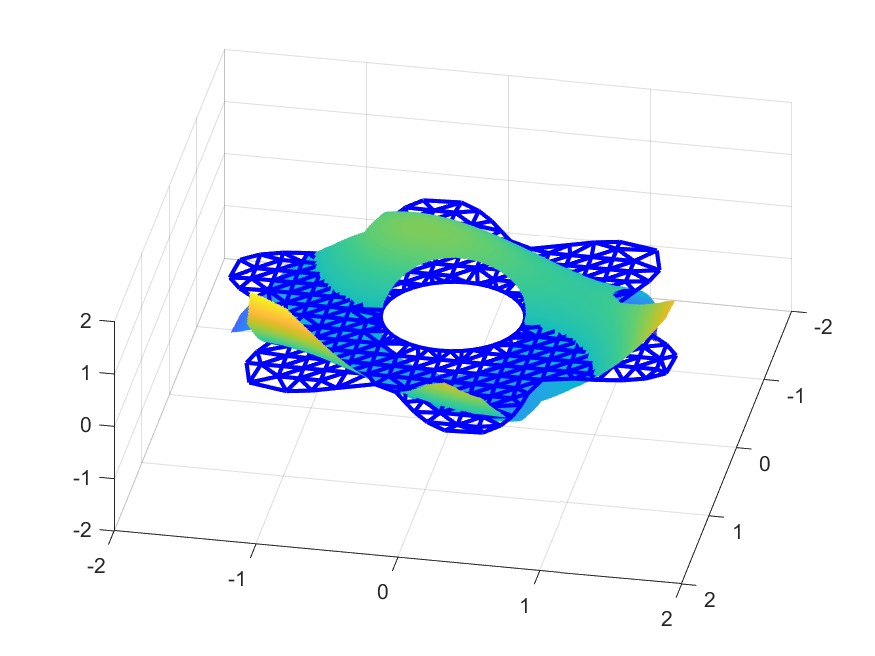}&
		\includegraphics[width=0.3\linewidth]{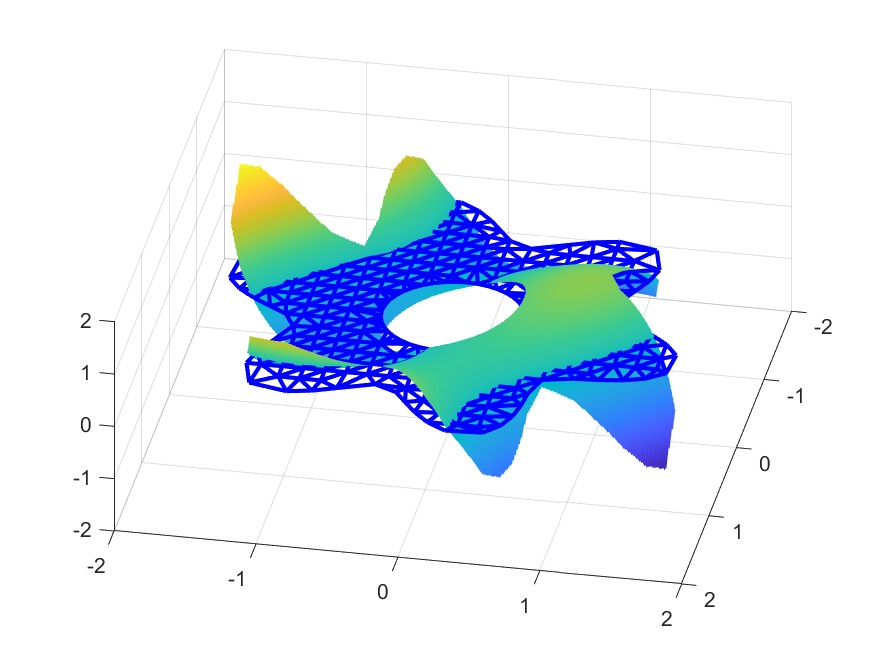}
			\end{tabular}
	\begin{tabular}{ccc}
		\includegraphics[width=0.3\linewidth]{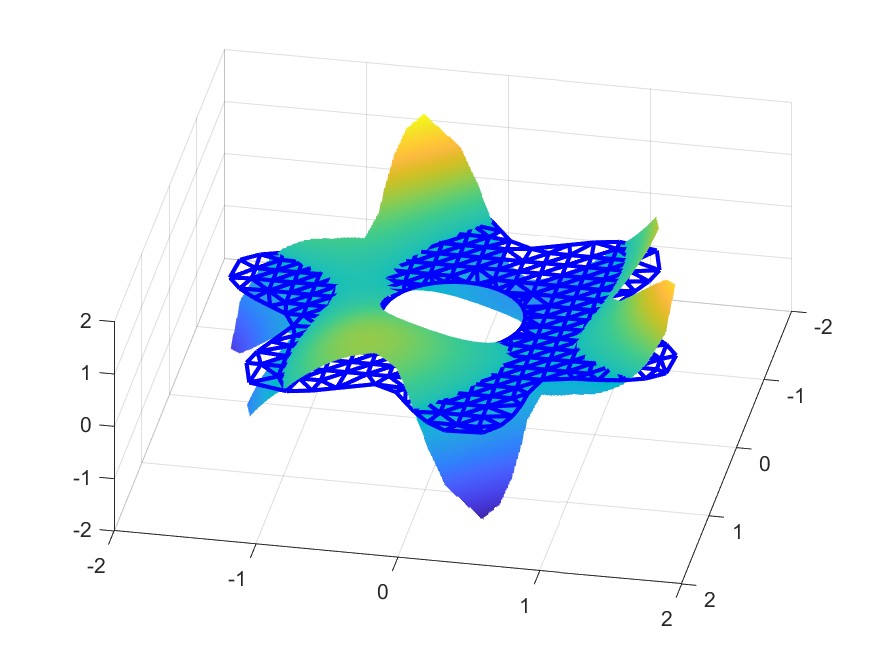}& \includegraphics[width=0.5\linewidth]{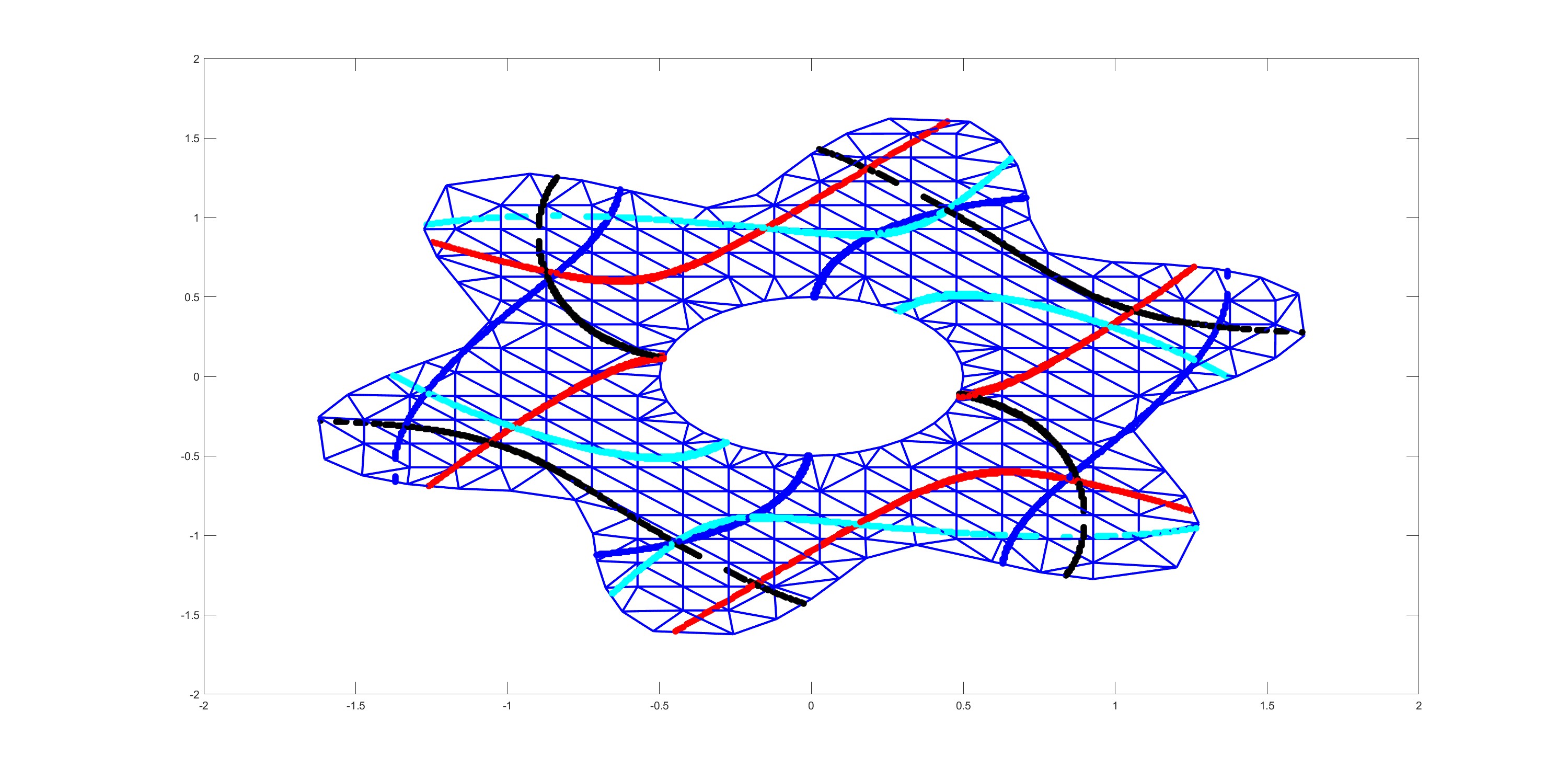}
	\end{tabular}
	\caption{Orthonormal Polynomials of Degree 3 over a Flower-shaped Domain and 
the Zero Curves of the Polynomials of Degree 3 \label{OPflower2}}
\end{figure}
\end{example}

\subsection{Zeros of Orthogonal Polynomials}
Let us first look for the common zeros of orthogonal polynomials over various 
polygonal domains. 
There are many examples that there is a point $p_1\in \Omega$ which is 
the intersection of the 
zero lines of linear orthogonal polynomials for various polygonal domain $\Omega$ 
in Figure~\ref{commonzeros} (except for the J-shaped domain).

	\begin{figure}[htpb]
	\begin{tabular}{cccc}
		\includegraphics[width=0.2\linewidth]{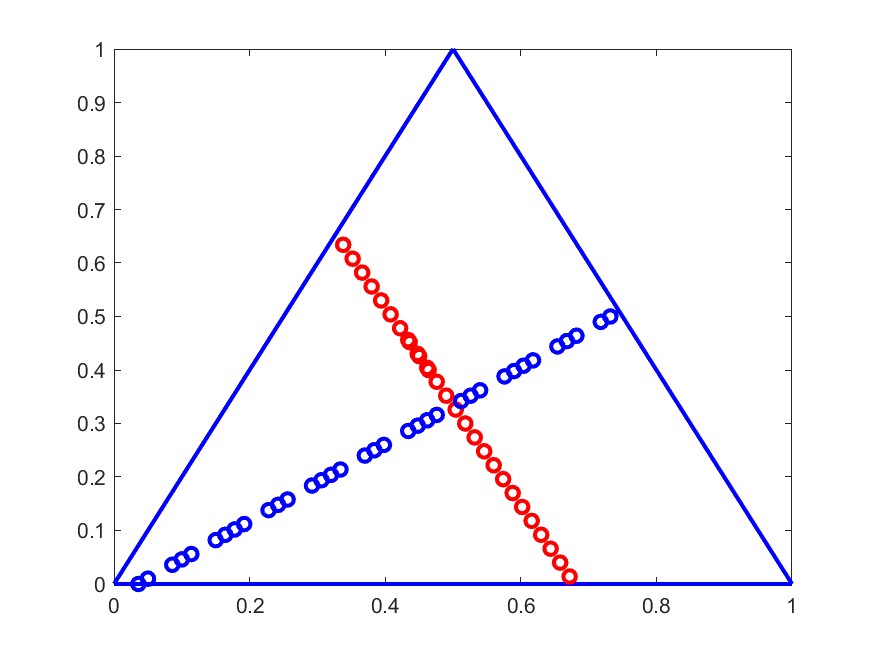}&
		\includegraphics[width=0.2\linewidth]{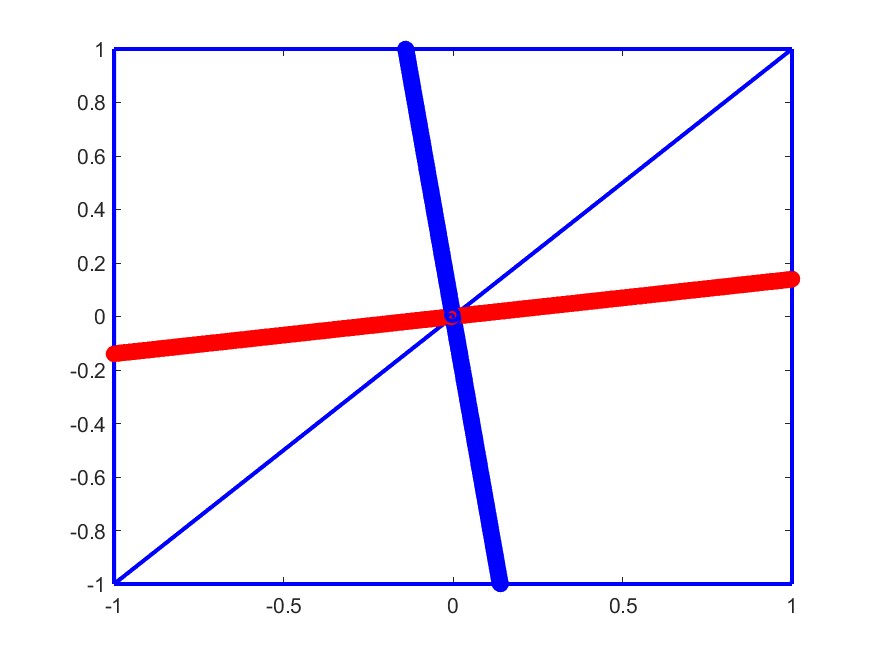}&
		\includegraphics[width=0.2\linewidth]{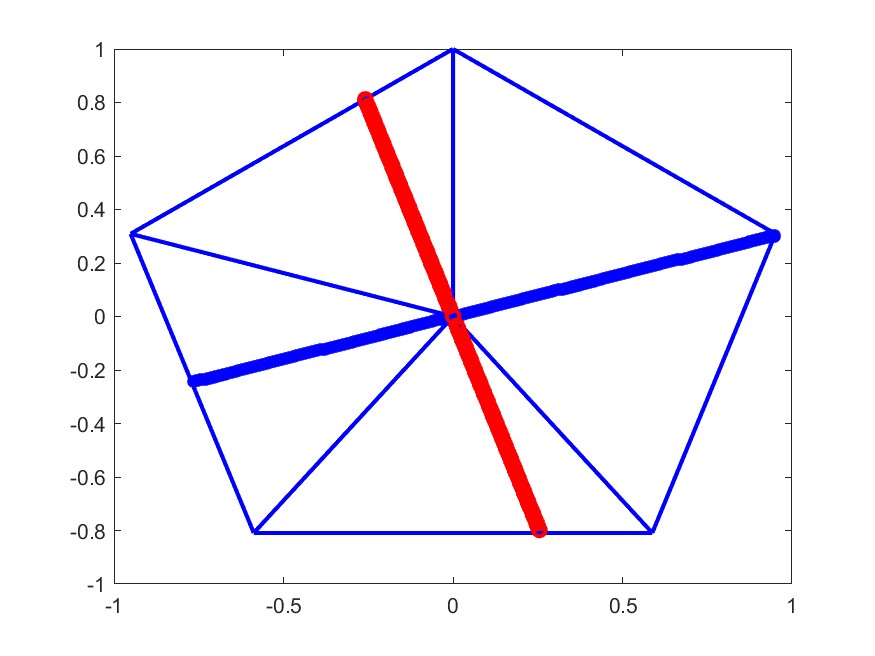}&
		\includegraphics[width=0.2\linewidth]{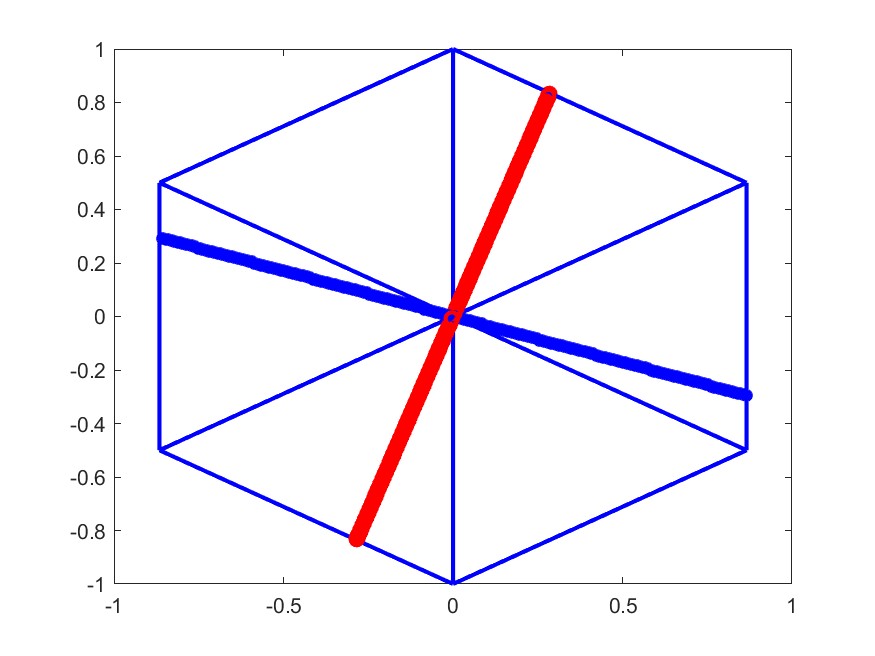}\cr
		\includegraphics[width=0.2\linewidth]{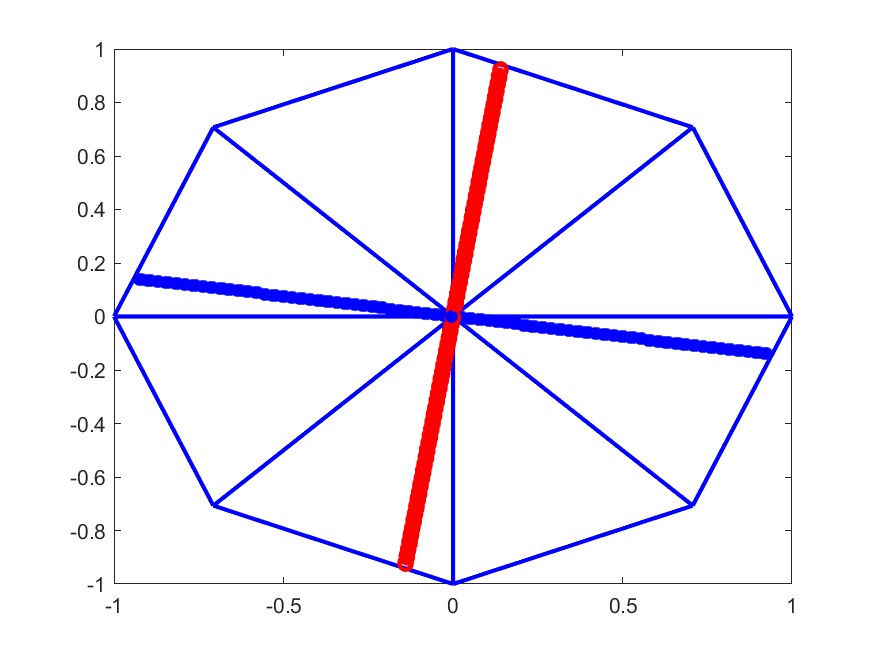}&
		\includegraphics[width=0.2\linewidth]{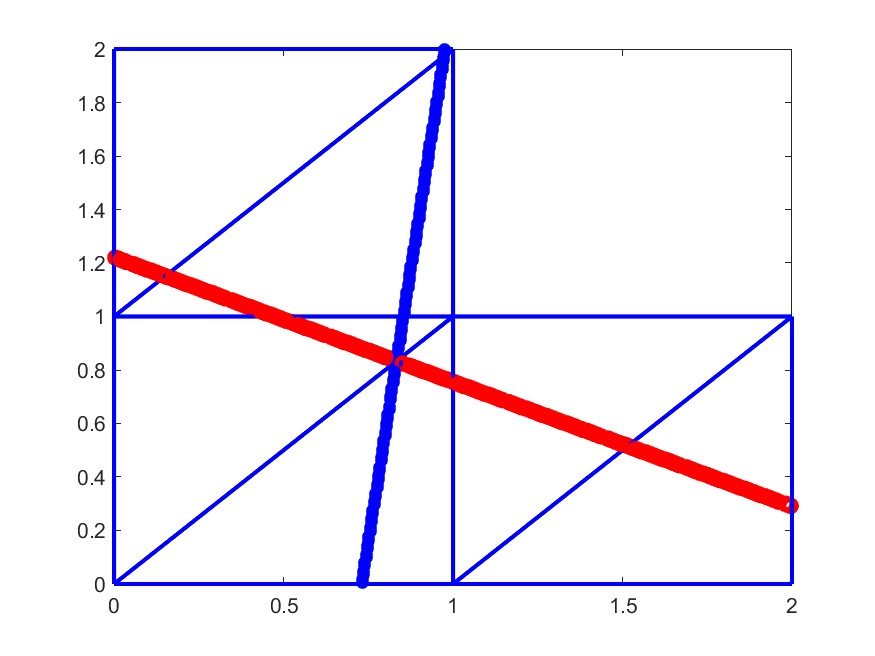}&
				\includegraphics[width=0.2\linewidth]{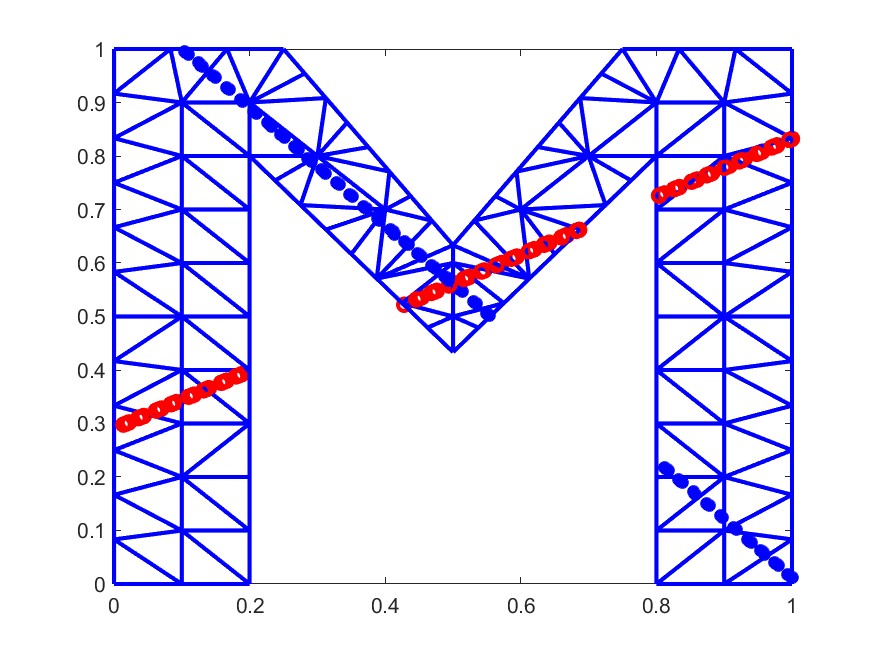}&
						\includegraphics[width=0.2\linewidth]{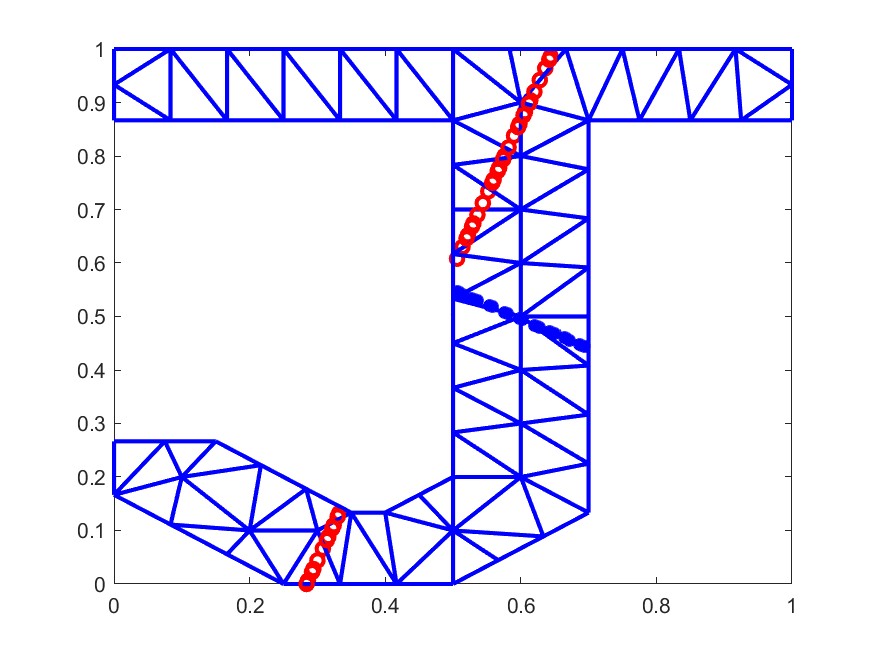}
		\end{tabular}
\caption{Zero lines (red and blue) of Two Orthogonal Linear Polynomials over 
Various Domains \label{commonzeros}}
	\end{figure}

Next let us look for common zero curves of orthogonal polynomials of higher degree 
than $1$.  
	\begin{figure}[htpb]
		\centering 
		\includegraphics[width=0.2\linewidth]{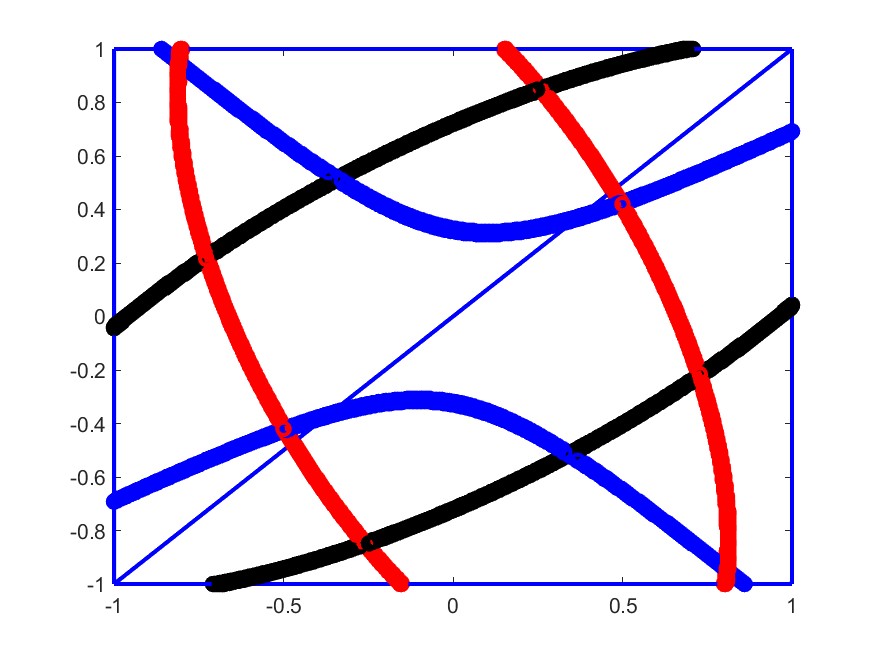}
		\includegraphics[width=0.2\linewidth]{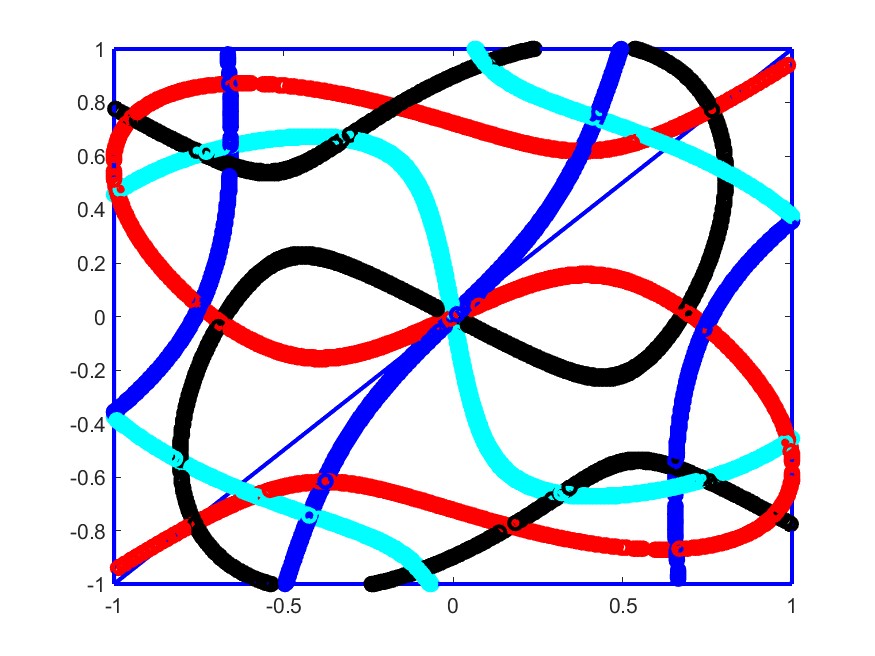}
				\includegraphics[width=0.2\linewidth]{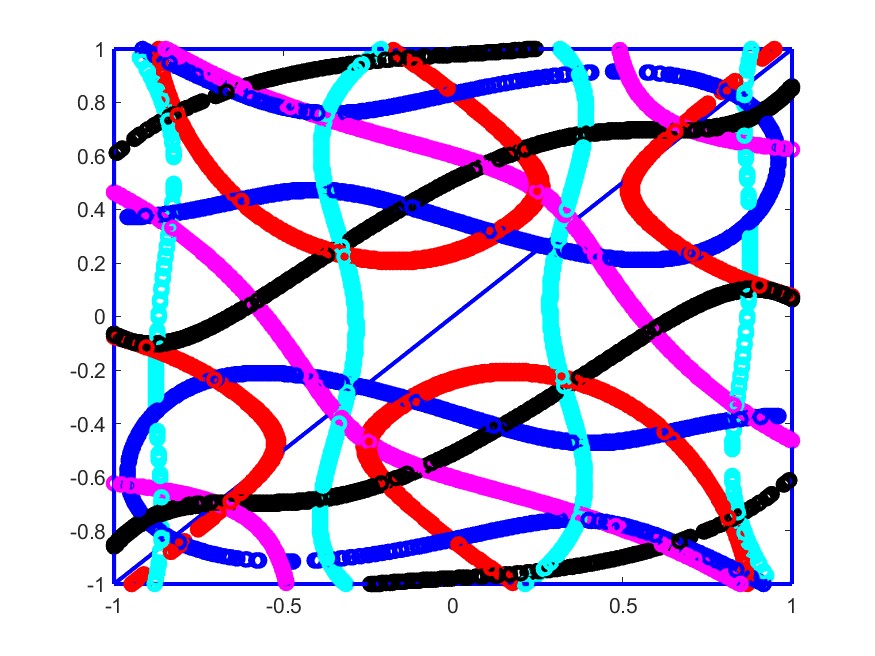}
		\includegraphics[width=0.2\linewidth]{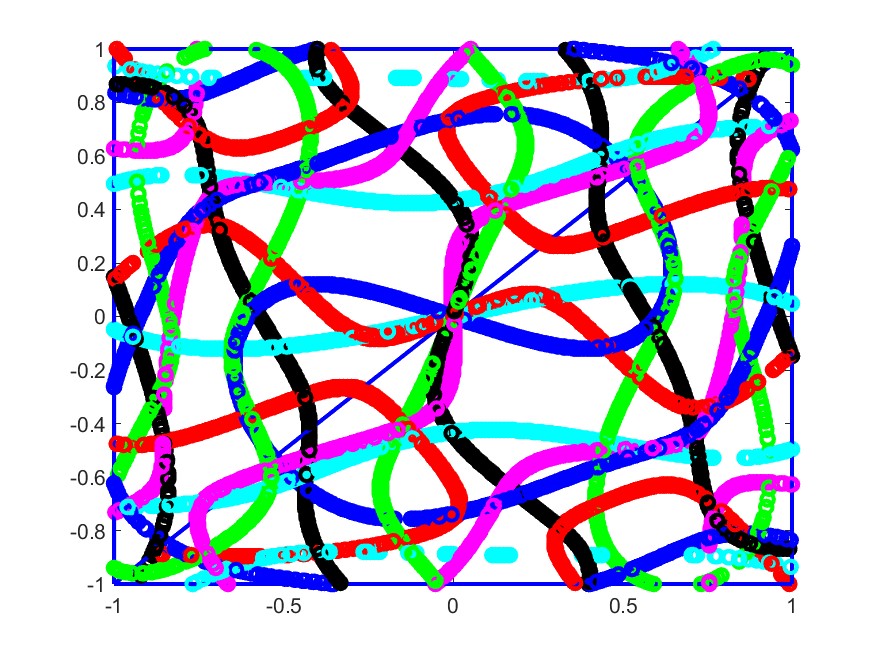}
	\caption{Zero Curves of Orthogonal Polynomials of degrees $2, 3, 4, 5$  \label{zeros1}}
	\end{figure}
 
These graphs in Figures~\ref{zeros1}, \ref{zerosP}, \ref{zerosH}, \ref{zerosO} show that 
there are not enough common zeros for polynomial degree $d=2$ to $d=5$. These
 ruin the hope to build up Gaussian quadrature like the one in the univariate setting. However, 
there is a common zero for all orthogonal polynomials of odd degrees, $d=1, 3, 5$ in
 Figures~\ref{zeros1}, \ref{zerosH}, and \ref{zerosO}.

\begin{example}
Consider a pentagonal domain $\Omega$ and plot the zero curves of orthogonal 
polynomials of degree $2, 3, 4, 5$ in Figure~\ref{zerosP}. 
	\begin{figure}[htpb]
		\centering 
		\includegraphics[width=0.2\linewidth]{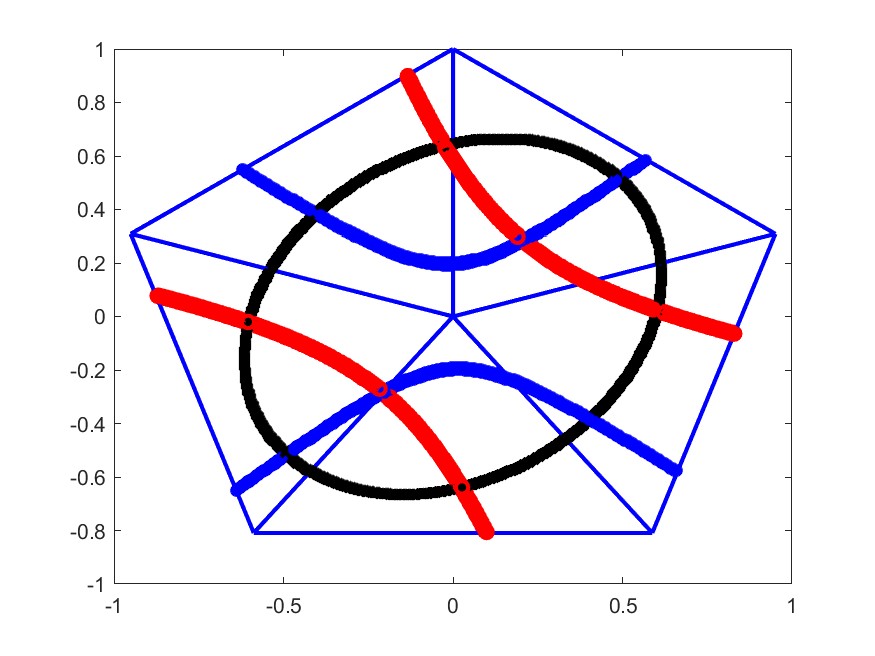}
		\includegraphics[width=0.2\linewidth]{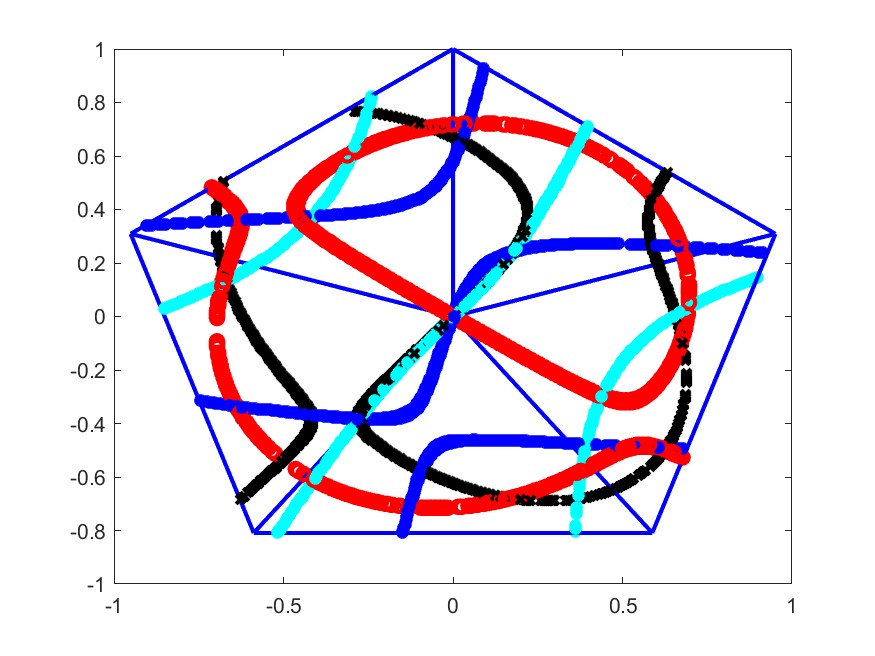}
				\includegraphics[width=0.2\linewidth]{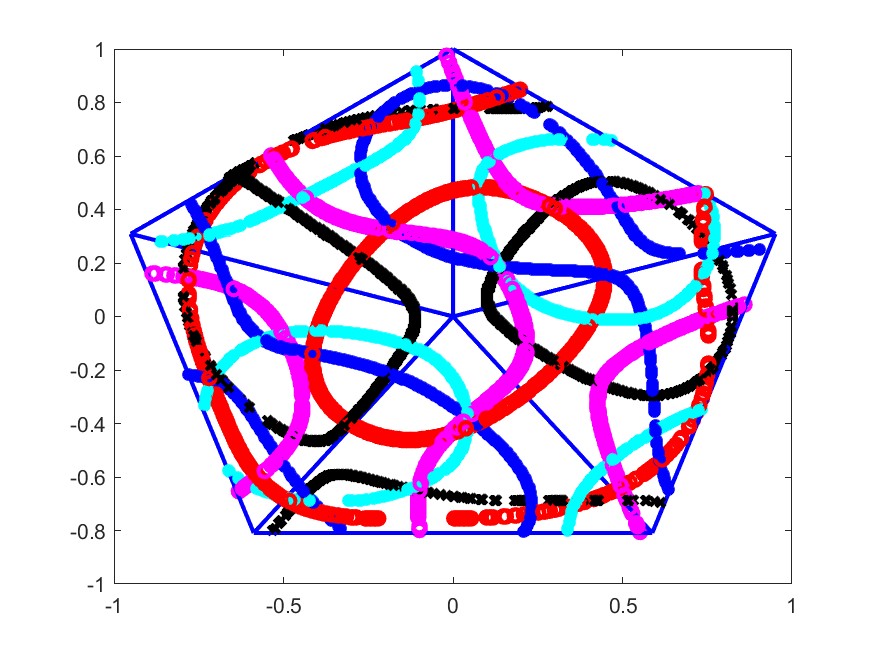}
		\includegraphics[width=0.2\linewidth]{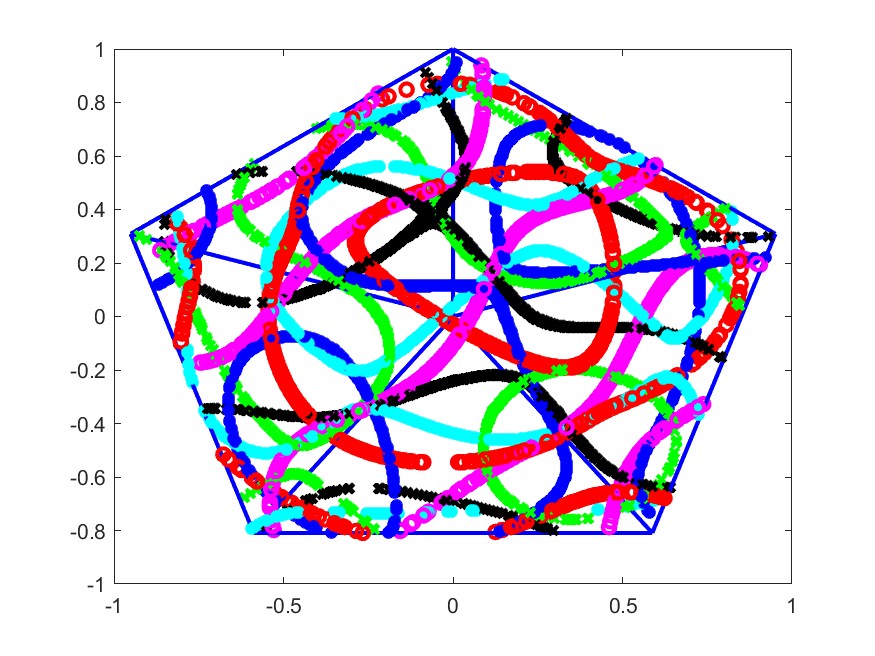}
	\caption{Zero Curves of Orthogonal Polynomials of degrees $2, 3, 4, 5$  \label{zerosP}}
	\end{figure}
We can see from Figure~\ref{zerosP} that there are no common zeros for $P_{d,j},j=0, 1, 
\cdots, d$ for $d=2, 3, 4,$ and $5$. 
\end{example}

\begin{example}
\label{hexagon}
Consider a hexagon $\Omega$ and we plot the zero curves 
of orthogonal polynomials of degree $2--5$ in Figure~\ref{zerosH}. 
	\begin{figure}[htpb]
		\includegraphics[width=0.18\linewidth]{commonzerosHd1.jpg}
		\includegraphics[width=0.18\linewidth]{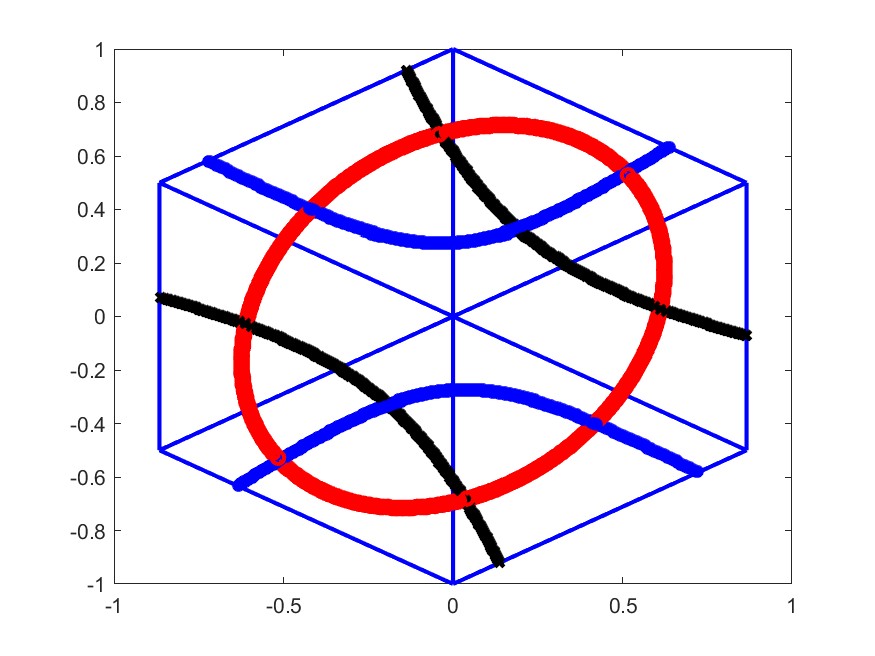}
		\includegraphics[width=0.18\linewidth]{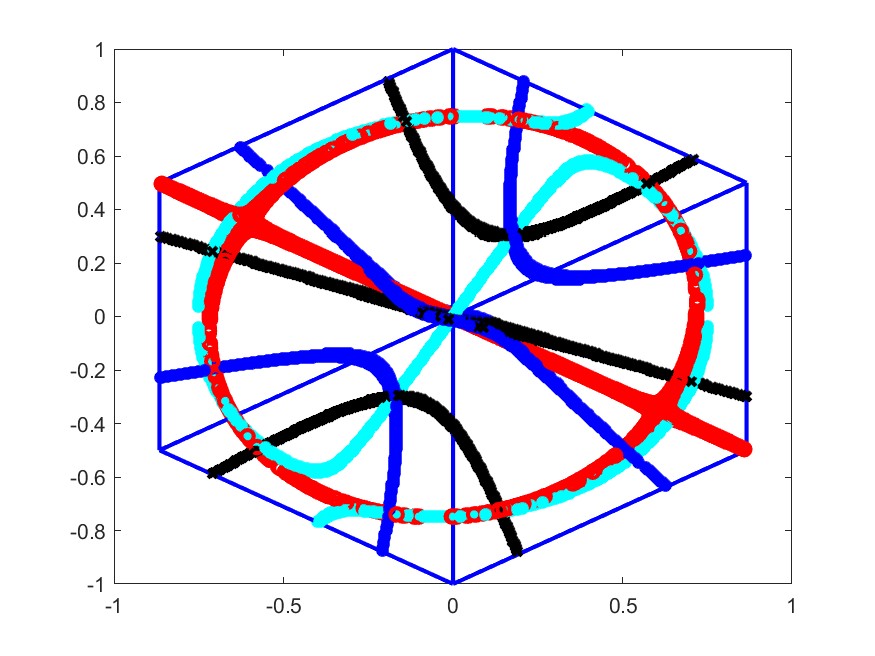}
				\includegraphics[width=0.18\linewidth]{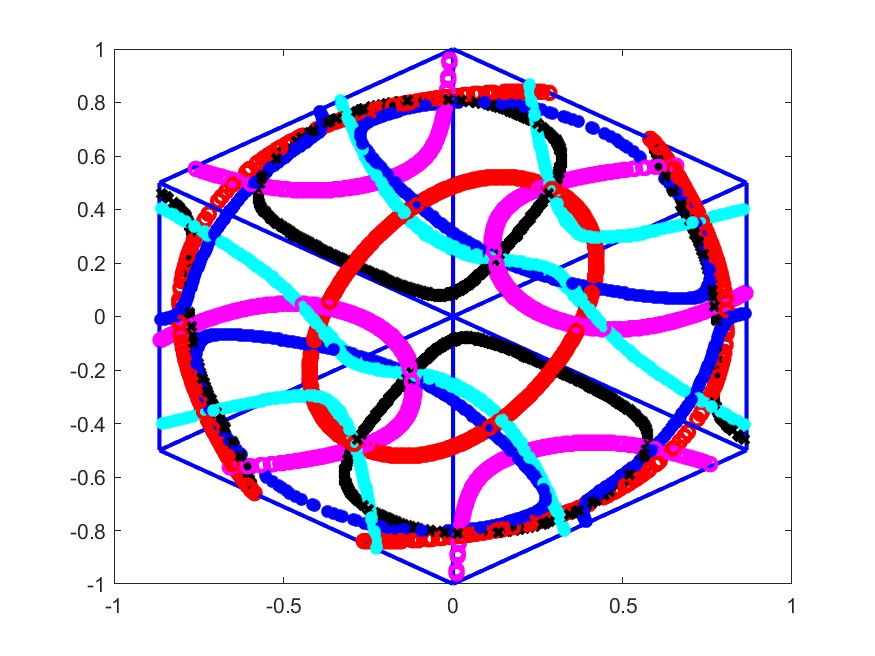}
		\includegraphics[width=0.18\linewidth]{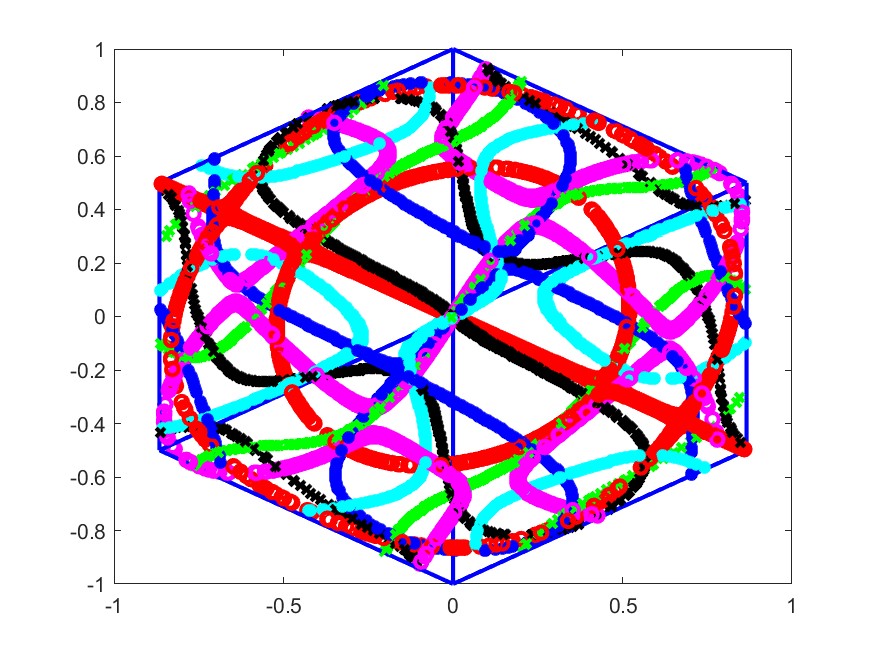}
	\caption{Zero Curves of Orthogonal Polynomials of degrees $2, 3, 4, 5$  \label{zerosH}}
	\end{figure}
We can see from Figure~\ref{zerosH} that there are a common zero for $P_{d,j},j=0, 1, \cdots, d$ for
 $d=1$, $d=3$ and $5$, but no common zeros for $d=2$ and $d=4$. 
The common zero for $d=3$ and $d=5$ is the origin $(0,0)$.  
\end{example}

\begin{example}
Consider an octagon $\Omega$ and we plot the zeros of orthogonal polynomials of degree 
$2--5$ in Figure~\ref{zerosO}. 
	\begin{figure}[htpb]
	\includegraphics[width=0.18\linewidth]{commonzerosOd1.jpg}
		\includegraphics[width=0.18\linewidth]{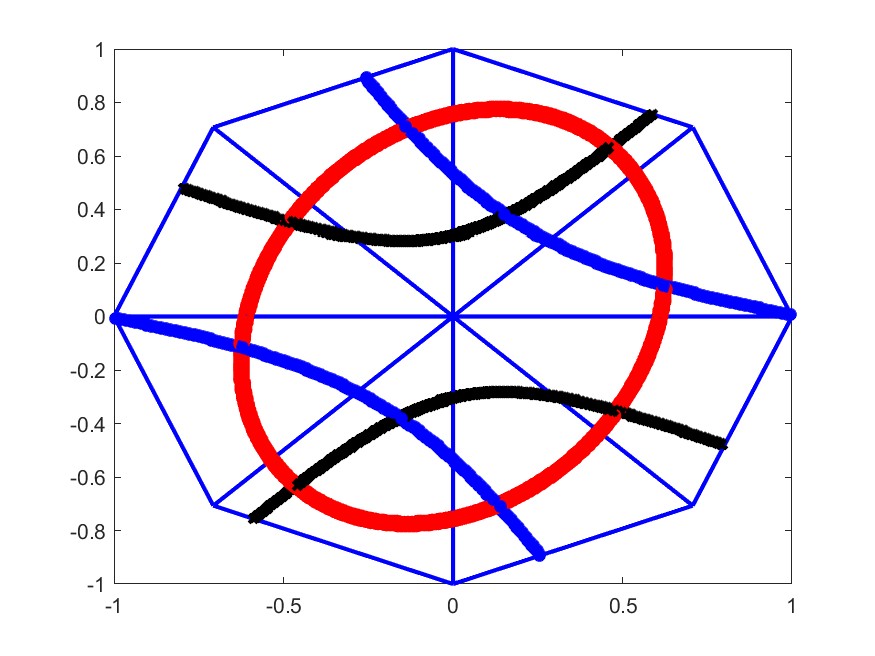}
		\includegraphics[width=0.18\linewidth]{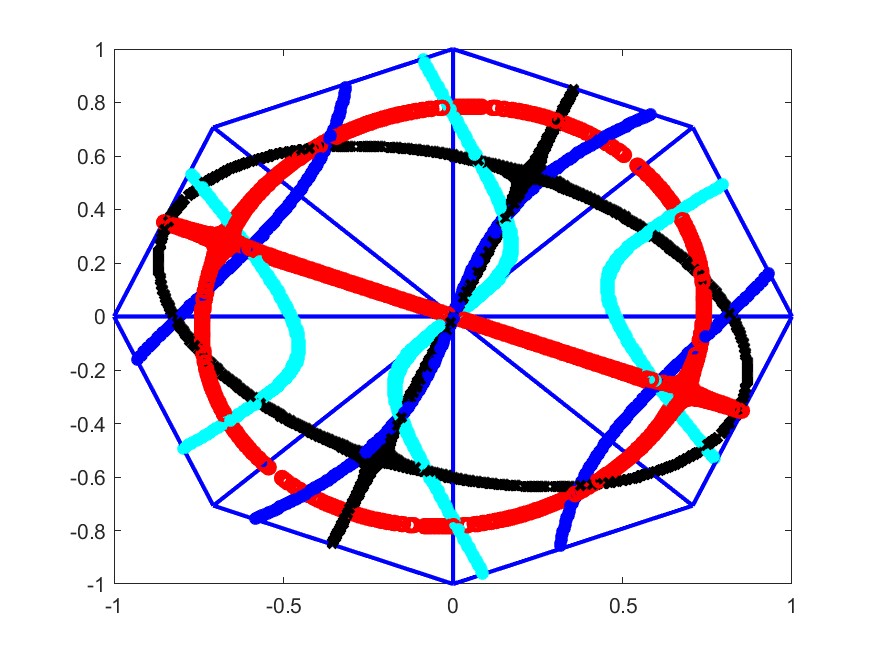}
				\includegraphics[width=0.18\linewidth]{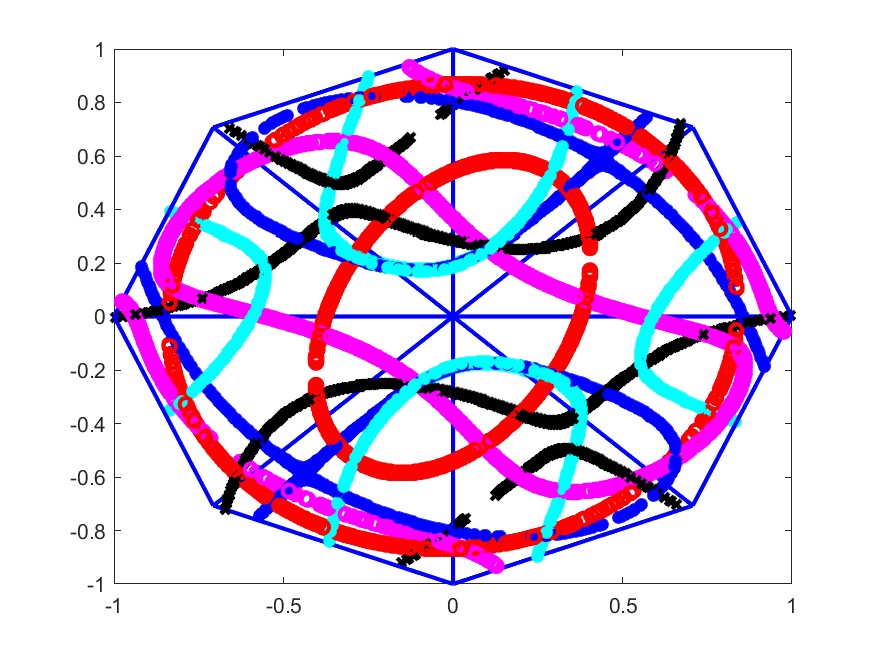}
		\includegraphics[width=0.18\linewidth]{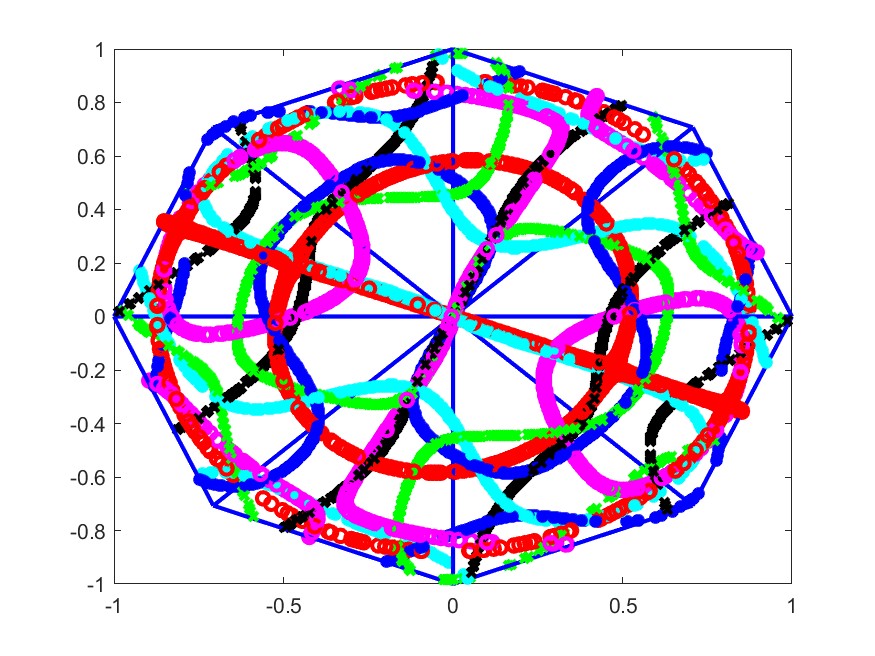}
	\caption{Zero Curves of Orthogonal Polynomials of degrees $2--5$  \label{zerosO}}
	\end{figure}
We can see from Figure~\ref{zerosH} that there are a common zero for $P_{d,j},j=0, 1, 
\cdots, d$ for $d=3$ and $5$, but no common zeros for $d=2$ and $d=4$. 
The common zero for $d=3$ and $d=5$ is the origin $(0,0)$.  
\end{example}

In fact, we did more calculation to see that the origin $(0,0)$ is the common zero of 
$P_{d,j}, j=0, \cdots, d$ for $d=3, 5, 7, $ etc.. when $\Omega$ is a hexagon and 
octagon.  More examples can be seen by using the MATLAB codes from the author.  

The above computational results echo the theoretical result discovered in \cite{X94}. 
See Theorem  3.1.1 in \cite{X94}. See also Corollary 3.7.7 in \cite{DX14}.  
\begin{thm}[Xu'1994 \cite{X94}]
If the domain $L$ is centrally symmetric and the dimension $d \ge 2$, then 
the orthogonal polynomial $\mathbb{P}_n$ has fewer than dim $\mathbb{P}_{n-1}$ 
common zeros. If, moreover, in $\mathbb{R}^2$, then orthogonal polynomials for 
$\mathbb{P}_d$ has no common zero if $d$ is even and has one common zero 
($\mathbf{x} = 0$) if $d$ is odd.
\end{thm}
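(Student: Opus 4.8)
The plan is to pass from the common zeros of the degree-$n$ orthogonal polynomials to the spectral theory of the block three-term recurrence, and to exploit the parity that central symmetry forces on the whole system. I would assemble the orthonormal polynomials of exact degree $k$ into a column vector $\mathbb{P}_k$ of length $r_k=\dim(\mathbb{P}_k\ominus\mathbb{P}_{k-1})$ (so $r_k=k+1$ in $\mathbb{R}^2$) and recall the matrix three-term recurrence $x_i\mathbb{P}_k=A_{k,i}\mathbb{P}_{k+1}+B_{k,i}\mathbb{P}_k+A_{k-1,i}^\top\mathbb{P}_{k-1}$. The standard fact I would invoke is Xu's joint-eigenvalue characterization: a point $\mathbf{x}_0$ is a common zero of all entries of $\mathbb{P}_n$ if and only if the truncated evaluation vector $(\mathbb{P}_0(\mathbf{x}_0),\dots,\mathbb{P}_{n-1}(\mathbf{x}_0))$ is a joint eigenvector of the symmetric block-tridiagonal Jacobi matrices $J_1,J_2$ of size $N\times N$, $N=\dim\mathbb{P}_{n-1}$, with eigenvalues $(x_0)_1,(x_0)_2$. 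In particular the common zeros are exactly the joint eigenvalues, and there are at most $N$ of them.

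First I would establish the parity lemma, which is the real engine. Let $R f(\mathbf{x})=f(-\mathbf{x})$. Since $L$ is centrally symmetric, $R$ is an isometry of the relevant $L^2$ space, $R^2=I$, and $R$ preserves each $\mathbb{P}_k$, hence preserves the orthogonal complement $V_n=\mathbb{P}_n\ominus\mathbb{P}_{n-1}$. For $0\ne P\in V_n$ the top homogeneous part is nonzero (otherwise $P\in\mathbb{P}_{n-1}\cap V_n=\{0\}$), and $Q:=RP-(-1)^nP\in V_n$ has vanishing top part, so $\deg Q\le n-1$ and thus $Q\in V_n\cap\mathbb{P}_{n-1}=\{0\}$. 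Hence $RP=(-1)^nP$, i.e. every degree-$n$ orthogonal polynomial satisfies $P(-\mathbf{x})=(-1)^nP(\mathbf{x})$. Three consequences follow at once: (i) the common-zero set is symmetric under $\mathbf{x}\mapsto-\mathbf{x}$; (ii) evaluation at the origin gives $\mathbb{P}_n(\mathbf{0})=(-1)^n\mathbb{P}_n(\mathbf{0})$, so $\mathbf{0}$ is a common zero whenever $n$ is odd; and (iii) each entry of $B_{k,i}=\int x_i\,\mathbb{P}_k\mathbb{P}_k^\top$ integrates an odd function (the factor $\mathbb{P}_k\mathbb{P}_k^\top$ is even), whence $B_{k,i}=0$ for all $k,i$, so the matrices $J_1,J_2$ have zero diagonal blocks.

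With $B_{k,i}=0$ the matrices become bipartite: in the even/odd splitting of the block levels, $J_i=\begin{pmatrix}0&G_i\\G_i^\top&0\end{pmatrix}$, and the parity operator $P=\mathrm{diag}((-1)^kI_{r_k})$ satisfies $PJ_iP=-J_i$, so the joint spectrum is symmetric about the origin, consistent with (i). The bound in the first assertion I would obtain from M\"oller's lower bound: for $n\ge 2$ a cubature of degree $2n-1$ for a centrally symmetric measure needs strictly more than $\dim\mathbb{P}_{n-1}$ nodes, so the Gaussian (maximal common-zero) case is impossible and the number of common zeros is $<\dim\mathbb{P}_{n-1}$. For the sharp two-dimensional count I would analyse the joint kernel at the origin through the bipartite blocks: a joint eigenvector for eigenvalue $\mathbf{0}$ splits as $(\mathbf{v}^E,\mathbf{v}^O)$ with $\mathbf{v}^O\in\ker G_1\cap\ker G_2$ and $\mathbf{v}^E\in\ker G_1^\top\cap\ker G_2^\top$. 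Using the nondegeneracy (full rank) of the recurrence coefficient matrices, a dimension count of the even- versus odd-level block totals $d_E,d_O$ shows $d_E<d_O$ when $n$ is even and $d_E>d_O$ when $n$ is odd; combined with the spanning property of $\{A_{k,1},A_{k,2}\}$ this forces the joint kernel to be trivial for even $n$ (no common zero at the origin) and one-dimensional, spanned by the evaluation vector $(\mathbb{P}_0(\mathbf{0}),\dots,\mathbb{P}_{n-1}(\mathbf{0}))$ supported on the even levels, for odd $n$ (the origin is a simple common zero).

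The main obstacle is eliminating all \emph{nonzero} common zeros, i.e. proving the origin is the only joint eigenvalue. Writing a putative joint eigenpair $(\lambda_1,\lambda_2)\ne\mathbf{0}$ in the bipartite form yields the coupled conditions $(\lambda_2G_1-\lambda_1G_2)\mathbf{v}^O=0$ and $(\lambda_2G_1^\top-\lambda_1G_2^\top)\mathbf{v}^E=0$, which is exactly the Mysovskikh-type commutativity obstruction. The delicate point is to prove that the full-rank structure of the $A_{k,i}$ peculiar to the planar, centrally symmetric case makes this coupled system inconsistent for every $(\lambda_1,\lambda_2)\ne\mathbf{0}$; this is the heart of Xu's refinement and where the bulk of the work lies. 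Once that inconsistency is established, the $\pm$-symmetry together with the origin analysis yields precisely the stated dichotomy: no common zeros for even $n$, and the single common zero $\mathbf{x}=\mathbf{0}$ for odd $n$.
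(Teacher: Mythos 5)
First, a point of comparison: the paper does not actually prove this statement. It is quoted as a theorem of Xu (Theorem 3.1.1 in \cite{X94}; see also Corollary 3.7.7 in \cite{DX14}) and is only corroborated numerically through the zero-curve plots for the square, hexagon and octagon. So there is no in-paper argument to measure you against; your proposal has to stand on its own as a proof of Xu's theorem.

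As an outline of Xu's actual strategy your sketch is faithful: the joint-eigenvalue characterization of common zeros via the block Jacobi matrices, the parity lemma $RP=(-1)^nP$ forcing $B_{k,i}=0$, the resulting anticommuting structure $PJ_iP=-J_i$, and the appeal to M\"oller's lower bound for the first assertion are all correct, and your dimension count ($d_E=m^2<d_O=m(m+1)$ for $n=2m$, $d_E=(m+1)^2>d_O=m(m+1)$ for $n=2m+1$) checks out. But the proposal is not a proof, for two concrete reasons. (1) The decisive step --- showing that the coupled system $(\lambda_2G_1-\lambda_1G_2)\mathbf{v}^O=0$, $(\lambda_2G_1^\top-\lambda_1G_2^\top)\mathbf{v}^E=0$ is inconsistent for every $(\lambda_1,\lambda_2)\neq\mathbf{0}$ --- is precisely the content of the sharp planar dichotomy, and you explicitly defer it (``this is the heart of Xu's refinement and where the bulk of the work lies''). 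Without it you have only established that the common-zero set is $\pm$-symmetric, contains $\mathbf{0}$ when $n$ is odd, and has fewer than $\dim\mathbb{P}_{n-1}$ points; the claims ``no common zero for even $n$'' and ``exactly one for odd $n$'' remain open. (2) Even the analysis at the origin is incomplete as written: the inequality $d_E\neq d_O$ does not by itself determine the joint kernel of $J_1$ and $J_2$; you need the full-rank and joint-surjectivity properties of the blocks $A_{k,i}$, which you invoke as ``the spanning property'' without justification. Both gaps are fillable --- they are filled in \cite{X94} --- but as it stands the argument reduces the theorem to the hardest part of Xu's paper rather than proving it.
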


\begin{remark}
One observation from the examples above is that when \textcolor{blue}{ we have } a polygonal domain, say J-shape, there
is no common zero for odd degree orthogonal polynomials.  
\end{remark}

\begin{remark}
Another observation from the construction above is when a domain is centrally 
symmetric, e.g. square, hexagon, octagon, etc., the 
orthogonal polynomials of an odd degree contains only monomials of odd degree and  the
orthogonal polynomials of an even degree contains only monomials of even degree. 
\end{remark}

\section{Applications of Orthogonal Polynomials}
A natural application of orthogonal polynomials is to build numerical quadrature over
various polygons motivated by the well-known Gauss quadrature in the univariate setting. 
Although it is known that no Gauss quadrature exists over even centrally symmetric 
domains, we will provide two approaches to use orthogonal polynomials for quadrature 
formula. One is to use polynomial reduction/division to reduce a polynomial 
function, e.g. an approximation of the given continuous function of interest, to a 
remainder of quadratic polynomials. One point quadrature formula can be built which 
will be exact for many more functions than linear polynomials. Another approach is to use
orthogonal polynomials for approximating the given continuous function of interest by 
interpolation based on its function values at some given locations over a domain 
of interest. 
Then the integration of the given function over the domain is simply the constant term of
the interpolating polynomial.   
These two approaches will be discussed in the following two subsections.  Finally, we 
discuss how to use less numbers of function values to achieve a higher order of 
polynomial accuracy. 

\subsection{Integration by Polynomial Reduction}
 We now extend some  ideas in the preliminary section to the two-dimensional setting.  
 Let $\Omega \subset \mathbb{R}^2$ be a bounded domain. For the convenience of 
computation to be discussed later, let $\Omega$ be a polygon. 
Even orthogonal polynomials are not unique, let us assume that we have a set of 
orthogonal polynomials over $\Omega$ based on the computation from the previous section. 

Recall that $\mathbb{P}_d$ is the polynomial space of total degree $\le d$ and write 
$P_{d,i}, i=0, \cdots, d$ for the orthogonal polynomials in 
 $\mathbb{P}_{d} \ominus \mathbb{P}_{d-1}$.  Then for any polynomial $P$ of degree 
$2d-1$, we can have a similar division formula as in the univariate setting 
 (\ref{Gauss}).    The following  result is well-known, e.g. see similar results in  
 \cite{AL}. 
For convenience, we present a proof. 
\begin{thm} 
\label{AGthm1}
Consider the bivariate polynomials in $\mathbb{P}_d, d\ge 1$.  
Suppose we have linearly independent (d+1) polynomials $P_{d,0}, ..., P_{d,d}$ of degree 
$d$ which span the space of all homogeneous polynomials of degree exactly $d$. Given 
a polynomial  $P$ of degree $\le 2d-1$ with $d\ge  2$,  there are   polynomials $q_i$ 
of degree $< d$ such that
\begin{equation*}
  P(x,y) = \sum_{i=0}^d q_i(x,y) P_{d,i}(x,y) + R(x,y),
\end{equation*}
where R(x,y) is a polynomial of degree $ d-1$ or less. 
\end{thm}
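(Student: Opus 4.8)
The plan is to reduce $P$ one homogeneous degree at a time, peeling off its top-degree part using the given degree-$d$ polynomials, until nothing of degree $\ge d$ remains. Throughout, for a polynomial $g$ let $g^\circ$ denote its leading homogeneous part (the homogeneous component of degree $\deg g$), and let $H_m$ denote the space of bivariate homogeneous polynomials of degree exactly $m$, so $\dim H_m = m+1$. I read the hypothesis that $P_{d,0},\dots,P_{d,d}$ span the homogeneous polynomials of degree $d$ as the statement that the leading parts $P_{d,0}^\circ,\dots,P_{d,d}^\circ$ span $H_d$; since there are $d+1 = \dim H_d$ of them, they form a basis of $H_d$.

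First I would isolate the one genuinely nontrivial ingredient, stated as a lemma: for every $m$ with $d \le m \le 2d-1$, each $f \in H_m$ can be written as $f = \sum_{i=0}^d h_i\, P_{d,i}^\circ$ with each $h_i \in H_{m-d}$. It suffices to prove this for a monomial $f = x^a y^b$, $a+b=m$. Since $m \ge d$, one may split $x^a y^b = (x^{a'} y^{b'})(x^{a-a'} y^{b-b'})$ with $a'+b' = d$, $0 \le a' \le a$, $0 \le b' \le b$ (take $a' = \min(a,d)$ and $b' = d-a'$; the constraint $b' \le b$ holds because $d \le a+b$). The first factor lies in $H_d$, hence is a linear combination of the $P_{d,i}^\circ$ by the spanning hypothesis, and multiplying that combination by the homogeneous degree-$(m-d)$ monomial $x^{a-a'} y^{b-b'}$ exhibits $x^a y^b$ in the required form.

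With the lemma in hand, the reduction is a finite descent on degree. Let $m = \deg P$, so $d \le m \le 2d-1$ (if $\deg P < d$ there is nothing to do and $R = P$). Applying the lemma to $P^\circ \in H_m$ gives $h_i \in H_{m-d}$ with $P^\circ = \sum_i h_i P_{d,i}^\circ$; set $P_1 = P - \sum_i h_i P_{d,i}$. The degree-$m$ homogeneous part of $\sum_i h_i P_{d,i}$ equals $\sum_i h_i P_{d,i}^\circ = P^\circ$, so this part cancels and $\deg P_1 \le m-1$. Repeating lowers the degree at each step, and after finitely many steps the running remainder has degree $\le d-1$; I would take that as $R$ and, for each $i$, collect the homogeneous multipliers produced along the way into $q_i$. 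Since the multiplier produced when the current top degree is $m'$ lies in $H_{m'-d}$ and $m'$ ranges over $d \le m' \le m$, each $q_i$ is a sum of homogeneous pieces of degrees $0$ through $m-d$, whence $\deg q_i \le m-d \le (2d-1)-d = d-1 < d$, as required, and $\deg R \le d-1$.

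The main obstacle is the lemma: it is precisely the assertion that the degree-$d$ spanning property propagates upward to all degrees through $2d-1$, and its content is that any monomial of degree $\ge d$ contains a degree-$d$ sub-monomial which can be absorbed into the span of the $P_{d,i}^\circ$. The monomial-factoring argument handles this cleanly in two variables; the only point to verify carefully is the existence of the split $(a',b')$, which rests on $m \ge d$. The surrounding degree-descent and the bookkeeping that keeps $\deg q_i < d$ are then routine.
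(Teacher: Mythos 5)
Your proof is correct, and its key ingredient --- factoring a monomial of degree $m\ge d$ as a degree-$d$ monomial times a degree-$(m-d)$ monomial, so that every element of $H_m$ lies in the $H_{m-d}$-span of a basis of $H_d$ --- is exactly the computation the paper performs when it rewrites each homogeneous component $P_i$, $i\ge d$, as $\sum_{k=0}^d (a_k y^{i-d})\,x^k y^{d-k}+\sum_{l}(a_{i-d+l}x^{i-d})\,x^l y^{d-l}$. Where you genuinely differ is in how the general $P_{d,i}$ are treated. The paper declares ``without loss of generality'' that $P_{d,i}=x^i y^{d-i}$ and then decomposes $P$ into homogeneous components once and for all; strictly speaking that argument only covers the case where the $P_{d,i}$ are themselves homogeneous, since replacing general $P_{d,i}$ by their leading monomials discards lower-order terms that re-enter the remainder after each subtraction. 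Your degree-descent --- subtracting $\sum_i h_i P_{d,i}$ with the \emph{full} polynomials rather than just their leading parts, and iterating on the strictly lower-degree remainder --- is precisely the step needed to absorb those lower-order contributions, so your version is the more careful and more general one. Your bookkeeping ($\deg q_i\le m-d\le d-1<d$, each $q_i$ assembled from homogeneous pieces of degrees $0$ through $m-d$) is also tighter than the paper's loosely stated induction on $P_{i-1}$. In short: same key lemma and same overall decomposition, but the paper buys brevity with an unexamined WLOG, while you buy full generality with one extra (routine) descent.
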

\begin{proof}
For convenience, let us give a proof. 
Without loss of generality, we assume that the given polynomial $P$ of degree $2d-1$ and 
the given $d+1$ linearly independent polynomials $P_{d,0}, \cdots, P_{d,d}$ are in Taylor
format, i.e. $P_{d,i}= x^iy^{d-i}$ for $i=0, \cdots, d$. Also, we write 
\begin{equation*}
P(x,y) = \sum_{i=0}^{2d-1} P_i(x,y),
\end{equation*}
where $P_i(x,y)$ is the part of polynomials of $P$ whose degree is exactly $i$. Let 
$R(x,y)= \sum_{i=0}^{d-1}P_i(x,y)$ be the residual portion of $P$. For $i\ge d$, we now write 
$P_i$ in terms of $P_{d,j}, j=0, \cdots, d$. It is easy to see when $i=d$, 
$P_d= \sum_{j=0}^d c_{d,j} P_{d,j}$ with constant coefficients $c_{d,j}$'s. 
Assume that $P_{i-1}$  a linear combination of $P_{d,j}, j=0, \cdots, d$ with polynomial 
coefficients $c_{i,j}$ when $i\ge d+1$. Let us consider $P_{i}$ with $i>d+1$. We write 
\begin{eqnarray*}
P_{i} &=& \sum_{k=0}^{i} a_k x^k y^{i-k} = \sum_{k=0}^d a_k x^k y^{d-k} (y^{i-d}) 
+ \sum_{k=d+1}^{i} a_k x^{k} y^{i-k}\cr 
&=& \sum_{k=0}^d a_k x^k y^{d-k} (y^{i-d})+ \sum_{k=0}^{i-d-1}a_{i-k} x^{d-k}y^k (x^{i-d})\\
&=& \sum_{k=0}^d (a_k y^{i-d}) x^k y^{d-k} + \sum_{l=2d+1-i}^{d}(a_{i-d+l}x^{i-d}) x^{l}y^{d-l}.
\end{eqnarray*}
That is, $P_i$ is a linear combination of $P_{d,k}, k=0, \cdots, d$ with polynomial 
coefficients with degree $\le d-1$.  By induction, all $P_i$ \textcolor{blue}{ are } a linear combination 
of $P_{d,j}, j=0, \cdots, d$ for $i\ge d$. Then $P$ can be written in the desired form.  
\end{proof}

Let us remark that when polynomials $P$ and $P_{d,i}$ are with rational coefficients, the 
above division can be done by using a MATLAB command PolynomialReduce.m.  
However, when the coefficients of these polynomials are irrational numbers, 
the MATLAB command does not work well. 
We now use the result above to derive a quadrature formula under some additional assumptions. 
\begin{thm} 
\label{mjlai10082024}
Let $P_{d,j},j=0, \cdots, d$ be orthogonal polynomials in $\mathbb{P}_d\ominus 
\mathbb{P}_{d-1}$ over polygonal domain
$\Omega$. Suppose that  there are $D_{d-1}$ locations: $p_1, \cdots, p_{D_{d-1}}\in 
\Omega$ which are some common zeros of all $P_{d,j},j=0, \cdots, d$.  Furthermore, suppose
that these locations $p_1, \cdots, p_{D_{d-1}}$ admit a unique polynomial 
interpolation of degree $d-1$, that is, for any continuous function $f$ 
defined on $\Omega$, there exists a unique polynomial $P_f$ satisfying 
$P_f(p_i)= f(p_i), i=1, \cdots, D_{d-1}$.  Writing 
\begin{equation}
P_f(x,y) = \sum_{i=1}^{D_{d-1}} f(p_i) L_i(x,y)
\end{equation}
for some Lagrange basis functions $L_i$ 
and letting $c_i = \int_\Omega L_i(x,y)dxdy$ for each $i$, we have 
\begin{equation}
\label{quadratureformula}
\int_\Omega f(x,y)dxdy \approx   \sum_{i=1}^{D_{d-1}} f(p_i)c_i =:Q(f).
\end{equation}
Then this quadrature $Q(f)$ is exact for all polynomials $f$ of degree $2d-1$.  
\end{thm}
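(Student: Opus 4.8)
The plan is to mimic the univariate Gauss quadrature derivation of Section~2, replacing the long division by the bivariate polynomial reduction of Theorem~\ref{AGthm1} and replacing the single residual evaluation by multivariate Lagrange interpolation at the common zeros. First I would take an arbitrary polynomial $f$ of degree $\le 2d-1$ and apply Theorem~\ref{AGthm1} to write
\begin{equation*}
f(x,y) = \sum_{j=0}^d q_j(x,y)\, P_{d,j}(x,y) + R(x,y),
\end{equation*}
where each quotient $q_j$ has degree $\le d-1$ and the remainder $R$ has degree $\le d-1$. This is legitimate because the leading homogeneous parts of $P_{d,0},\ldots,P_{d,d}$ span the homogeneous polynomials of degree exactly $d$, which is precisely the hypothesis needed to run the reduction.

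Next I would use orthogonality to collapse the sum under integration. Since $P_{d,j}\in \mathbb{P}_d\ominus\mathbb{P}_{d-1}$, each $P_{d,j}$ is orthogonal to every polynomial of degree $\le d-1$; in particular $\int_\Omega q_j P_{d,j}\,dx\,dy = 0$ because $q_j\in\mathbb{P}_{d-1}$. Hence
\begin{equation*}
\int_\Omega f(x,y)\,dx\,dy = \int_\Omega R(x,y)\,dx\,dy .
\end{equation*}
The third step identifies $R$ with the interpolant $P_f$. Evaluating the reduction at a common zero $p_i$ and using $P_{d,j}(p_i)=0$ for all $j$ gives $f(p_i)=R(p_i)$ for every $i=1,\ldots,D_{d-1}$. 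Since $R\in\mathbb{P}_{d-1}$, since $D_{d-1}=\dim\mathbb{P}_{d-1}$, and since the nodes $p_1,\ldots,p_{D_{d-1}}$ admit a unique interpolant of degree $d-1$, the polynomial $R$ is its own unique nodal interpolant; as $P_f$ interpolates the identical data $f(p_i)=R(p_i)$, uniqueness forces $P_f=R$. Using $c_i=\int_\Omega L_i\,dx\,dy$ I then obtain
\begin{equation*}
Q(f)=\sum_{i=1}^{D_{d-1}} f(p_i) c_i = \int_\Omega \sum_{i=1}^{D_{d-1}} f(p_i) L_i(x,y)\,dx\,dy = \int_\Omega P_f\,dx\,dy = \int_\Omega R\,dx\,dy = \int_\Omega f\,dx\,dy,
\end{equation*}
which is the assertion.

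The main obstacle, and where I would spend the most care, is the first step: checking that the reduction of Theorem~\ref{AGthm1} genuinely produces quotients $q_j$ and a remainder $R$ of degree at most $d-1$ when the $P_{d,j}$ are true orthogonal polynomials (carrying lower-order terms) rather than the pure monomials used in that theorem's proof. The degree bound on $q_j$ is exactly what makes the orthogonality argument applicable, while the degree bound on $R$ together with the dimension count $D_{d-1}=\dim\mathbb{P}_{d-1}$ is what makes the interpolation well-posed. If either bound slips the argument fails, and the hypothesis $\deg f\le 2d-1$ (rather than $2d$) is precisely what guarantees $\deg q_j\le d-1$.
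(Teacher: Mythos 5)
Your proposal is correct and follows essentially the same route as the paper's own proof: apply Theorem~\ref{AGthm1} to write $f=\sum_j q_j P_{d,j}+R$, use the common zeros to get $f(p_i)=R(p_i)$ and hence $P_f\equiv R$ by interpolation uniqueness, and use orthogonality of $P_{d,j}$ to $\mathbb{P}_{d-1}$ to conclude $\int_\Omega f=\int_\Omega R=Q(f)$. Your explicit attention to the degree bounds on the quotients $q_j$ and the remainder $R$ is a welcome clarification of a step the paper leaves implicit, but it is not a departure from the paper's argument.
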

\begin{proof}
Indeed, given a polynomial $f$ of degree $2d-1$, we use the theorem above to have
$$
f= \sum_{i=0}^d q_i P_{d,i}+ R.
$$
Note that the interpolating polynomial $P_f$ of
degree $d-1$ at these points $p_i$ satisfies 
$f(p_j)= P_f(p_j) = \sum_{i=0}^d q_i(p_j) P_{d,i}(p_j) + R(p_j)= R(p_j)$ for all $j$.   
So $P_f$ is the interpolating polynomial of degree $d-1$ with function  
values $R(p_i)$ instead and $R$ is of degree $d-1$, $P_f\equiv R$.     
 By the quadrature formula (\ref{quadratureformula}), 
$$
Q(R) = \int_\Omega R(x,y)dxdy = \int_\Omega (R(x,y)+ \sum_{i=0}^d q_i(x,y) 
P_{d,i}(x,y)) dxdy =\int_\Omega f(x,y)dxdy.
$$
That is, the quadrature is exact for polynomials of degree $2d-1$.   
\end{proof}

As we have seen in Figure~\ref{commonzeros} 
that there is a common zero of orthogonal polynomials in $\mathbb{P}_1
\ominus \mathbb{P}_0$ over many different polygons, we can use the above theorem 
to obtain the following quadrature formulas:  
\begin{example}
Let $\Omega$ be a polygonal domain and for $d=1$, let $P_{1,1}$ and $P_{1,2}$ be
two linear orthogonal polynomials which is also orthogonal to constant function 
$P_{0,0}$.  Since 
$P_{1,1}$ and $P_{1,2}$ are linear polynomials and orthogonal to $P_{0,0}$, $P_{1,1}$ is 
positive and negative over $\Omega$, and hence, the zeros of $P_{1,1}$ is a line inside
$\Omega$.  So is $P_{1,2}$. As these two lines are not parallel,  let $p_1$ be the 
interception  of these two lines.  If $p_1\in \Omega$, then 
\begin{equation}
\label{quadrature0}
\int_\Omega f(x,y)dxdy \approx f(p_1) A_\Omega,
\end{equation}
where $A_\Omega$ stands for the area of $\Omega$. By Theorem~\ref{mjlai10082024}, 
the quadrature will be exact for all linear polynomials.  For example
\begin{equation}
\label{GaussQuadrature0}
\int_{-1}^1\int_{-1}^1 f(x,y)dxdy \approx 4 f(0,0)
\end{equation}
which is exact for all linear polynomials. The quadrature (\ref{quadrature0}) is the
well-known one point quadrature in the bivariate setting. 
\end{example} 

\begin{example}
Consider a triangular domain $T$ with vertices $\{(0,0),(1,0),(1,1)\}$. Then the point
$p=(2/3,1/3)$ is the intersection of two orthogonal linear polynomials as 
shown in the first graph of Figure~\ref{zeros1} which is also 
orthogonal to constant functions. Then the following quadrature 
$$
\int_T f(x,y)dxdy \approx  \frac{1}{2} f(2/3,1/3)
$$
will  be exact for all linear polynomials. 
\end{example}

\begin{remark}
Note that Theorem~\ref{mjlai10082024} may be used to conclude that there is no Gaussian 
quadrature for $d>1$ as we have seen that there are not enough number of common zeros
of orthogonal polynomials in $\mathbb{P}_{d}\ominus \mathbb{P}_{d-1}$ for $d>1$ over a square domain, pentagonal domain,  
hexagonal domain,  octagonal domain,  L-shaped domain, etc. in the previous section.  
This observation was already pointed out in \cite{X94}, see also \cite{DX14} or \cite{X21}.   
\end{remark}

We now study the one point quadrature as it will be exact for more than just linear 
polynomials. The following is one of the main theorems in this paper. 
\begin{thm}
\label{mainresult1}
Let $\Omega$  be a  polygon, e.g.  square, hexagon or octagon, etc such that there 
exists a point $\mathbf{p} \in \Omega$ which  is the common zero of all orthogonal 
polynomials $P_{2d-1,j}, j=0, \cdots, 2d-1$ for $d=1, 2, \cdots, $.  
For any polynomial $P_n$ of degree $n$ with $2d-1 \le n\le 2d$,  let us write 
\begin{equation}
\label{mjlai10262024}
P_n(x,y) = \sum_{j=0}^{2d-1} q_j(x,y) P_{2d-1,j}(x,y) + \sum_{j=0}^{2d-3} \hat{q}_j(x,y) 
P_{2d-3,j}(x,y) + \cdots + \sum_{j=0}^1 \tilde{q}_j(x,y) P_{1,j}(x,y)+   R  
\end{equation}
based on orthogonal polynomials of odd degree, 
where the residual $R$ is a constant and quotients $q_j(x,y), \hat{q}_j(x,y), 
\tilde{q}_j(x,y)$  are  
linear polynomials.   If $\tilde{q}_j(x,y)$ are constant functions for  $j=0$ and $j=1$, then 
\begin{equation}
\label{surprise}
\int_\Omega P_n(x,y)dxdy  = P_n(\mathbf{p}) A_\Omega,
\end{equation}
where $A_\Omega$ is the area of $\Omega$.  If 
$\tilde{q}_j$ are indeed 
linear polynomials with $\tilde{q}_j= a_ix+b_jy+ c_j$, then 
\begin{equation}
\label{surprise2}
\int_\Omega P_n(x)dx  = P_n(\mathbf{p}) A_\Omega + \sum_{j=0}^1( a_j \alpha_j + b_j \beta_j),
\end{equation}
where $\alpha_j =\int_\Omega x P_{1,j} (x,y) dxdy$ and $\beta_j 
=\int_\Omega y P_{1,j}(x,y)dxdy$ for $j=0,1$ which can be precomputed beforehand.
\end{thm}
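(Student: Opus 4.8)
The plan is to integrate the expansion (\ref{mjlai10262024}) over $\Omega$, use the orthogonality built into the families $P_{k,j}$ to annihilate almost every summand, and then recover the constant $R$ by evaluating (\ref{mjlai10262024}) at the common zero $\mathbf{p}$. First I would note that the expansion (\ref{mjlai10262024}) itself is produced by iterating the reduction of Theorem~\ref{AGthm1}: dividing $P_n$ (of degree $n\le 2d$) successively by $P_{2d-1,j}, P_{2d-3,j},\dots,P_{1,j}$, the top homogeneous part drops by two at each stage, so every quotient stays linear and the final residual $R$ is a constant. I would take this decomposition as given and treat the two displayed identities as consequences of it.

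The key step is the orthogonality bookkeeping. For each odd index $2k-1\ge 3$ the quotient multiplying $P_{2k-1,j}$ is linear, hence lies in $\mathbb{P}_1\subseteq\mathbb{P}_{2k-2}$; since $P_{2k-1,j}$ is orthogonal to all of $\mathbb{P}_{2k-2}$ by construction, every such integral $\int_\Omega q\,P_{2k-1,j}\,dxdy$ vanishes. The only family not killed outright is the degree-one set $P_{1,j}$, which is orthogonal merely to the constants $\mathbb{P}_0$. Integrating (\ref{mjlai10262024}) therefore collapses to
\[
\int_\Omega P_n\,dxdy = \sum_{j=0}^1 \int_\Omega \tilde q_j\,P_{1,j}\,dxdy + R\,A_\Omega .
\]

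To identify $R$, I would evaluate (\ref{mjlai10262024}) at $\mathbf{p}$. By hypothesis $\mathbf{p}$ is a common zero of every odd-degree orthogonal polynomial, so $P_{2k-1,j}(\mathbf{p})=0$ for all $k$ and $j$, in particular $P_{1,j}(\mathbf{p})=0$; every product term drops out and we are left with $P_n(\mathbf{p})=R$. It then remains to treat the two surviving $P_1$ integrals. If each $\tilde q_j$ is a constant, then $\int_\Omega \tilde q_j P_{1,j}=\tilde q_j\int_\Omega P_{1,j}=0$ because $P_{1,j}$ is orthogonal to $\mathbb{P}_0$, and the collapsed identity reduces to $\int_\Omega P_n = R\,A_\Omega = P_n(\mathbf{p})A_\Omega$, which is (\ref{surprise}). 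If instead $\tilde q_j=a_jx+b_jy+c_j$, I would expand $\int_\Omega \tilde q_j P_{1,j}= a_j\alpha_j+b_j\beta_j+c_j\int_\Omega P_{1,j}$, where the last term again vanishes by orthogonality to constants; substituting $R=P_n(\mathbf{p})$ then gives (\ref{surprise2}).

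The argument is essentially bookkeeping once the expansion is in hand, so I do not expect a serious obstacle in the integration itself. The part that needs the most care is justifying the expansion (\ref{mjlai10262024}) with the precise degree constraints, namely linear quotients and a scalar residual, rather than the coarser output of Theorem~\ref{AGthm1} whose quotients are only known to have degree $<d$. Tracking the homogeneous parts degree by degree, exactly as in the proof of Theorem~\ref{AGthm1}, is what secures these sharper degrees. The other mild point is that the relations $\int_\Omega q\,P_{k,j}=0$ are genuine $L^2(\Omega)$ orthogonality over the whole polygon, which holds by the construction of \S3 and is exact thanks to the closed-form inner product (\ref{keyformula}).
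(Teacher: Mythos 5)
Your proposal is correct and follows essentially the same route as the paper: integrate the odd-degree expansion, use the orthogonality of each $P_{2k-1,j}$ to $\mathbb{P}_{2k-2}$ (which contains the linear quotients once $2k-1\ge 3$) to annihilate all but the $P_{1,j}$ terms, identify $R=P_n(\mathbf{p})$ from the common zero, and split the surviving $\int_\Omega \tilde q_j P_{1,j}$ into the two cases. The only cosmetic difference is that the paper illustrates with $d=3$ and carries a linear residual $R_3$ that it integrates via the one-point quadrature of Theorem~\ref{mjlai10082024}, whereas you work directly with the constant residual $R$ as stated; the content is the same.
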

\begin{proof}
For convenience, let us say $d=3$. In this case, consider a polynomial function $P$   of 
degree $6$ or less.  We first expand 
$P(x,y) = \sum_{j=0}^{d} q_j(x,y) P_{5, j}(x,y) + R_1(x,y)$, 
where $q_j\in \mathbb{P}_1$ and $R_1\in \mathbb{P}_4$.
Then we express $R_1=  \sum_{j=0}^3 \hat{q}_j(x,y) P_{3,j}(x,y) + R_2$ with $\hat{q}_j\in 
\mathbb{P}_1$ and $R_2\in \mathbb{P}_2$.  
We write $R_2(x,y) = \sum_{j=0}^1 \tilde{q}_j(x,y) P_{1,j}(x,y)+ R_3(x,y)$, 
where $\tilde{q}_j$ and $R_3$ are linear polynomials.   
If $\tilde{q}_j$ are constants, then by the orthogonal property of $P_{1,j}$ with constant 
functions, we know 
$\int_\Omega R_2(x,y)dxdy  = \int_\Omega R_3(x,y)dx = R_3(\mathbf{p}) A_\Omega$ by 
Theorem~\ref{mjlai10082024}.  
Note that $R_3(\mathbf{p})= P(\mathbf{p})$ as $\mathbf{p}$ is the common zeros  
of all $P_{2k-1,j}, j=0, \cdots, 2k-1, k=1,2, \cdots, $.  Indeed, by the orthogonality, we have
\begin{eqnarray}
\int_\Omega P(x,y)dx &=&  \sum_{j=0}^{5} \int_\Omega q_j(x,y) P_{5,j}dx +  \sum_{j=0}^{3}  
\int_\Omega \hat{q}_j(x,y) P_{3,j}(x,y)dx + \cr
&& +\sum_{j=0}^{1}  \int_\Omega \tilde{q}_j(x,y) P_{1,j}(x,y)dxdy  +  \int_\Omega R_3(x,y)dxdy\cr
&=& \int_\Omega R_3(x,y)dxdy  = R_3(\mathbf{p}) A_\Omega = P(\mathbf{p}) A_\Omega .
\end{eqnarray}
When $\tilde{q}_j= a_j x+b_j y+ c_j$,  $\int_\Omega \tilde{q}_j(x) P_{1,j}(x,y)dxdy = a_j 
\int_\Omega x P_{1,j}(x,y)dxdy+ b_j\int_\Omega y P_{1,j}(x,y)dxdy 
+ c_j \int_\Omega P_{1,j}(x,y)dxdy= a_j \alpha_j + b_j \beta_j$.  
These complete the proof.  
\end{proof}

Similarly, we can have the following
\begin{thm}
\label{mainresult2}
Let $\Omega$  be a centrally symmetric  domain in the sense that $(\pm x, \pm y)\in \Omega$.  
For any polynomial $P_n$ of degree $n$ with $2d \le n\le 2d+1$,  let us write 
\begin{equation}
\label{mjlai10262025}
P_n(x,y) = \sum_{j=0}^{2d} q_j(x,y) P_{2d,j}(x,y) + \sum_{j=0}^{2d-2} \hat{q}_j(x,y) 
P_{2d-2,j}(x,y) + \cdots + \sum_{j=0}^2 \tilde{q}_j(x,y) P_{2,j}(x,y)+   R(x,y), 
\end{equation}
based on orthogonal polynomials of even degree, 
where the residual polynomial $R(x,y)$ and quotients $q_j(x,y), \hat{q}_j(x,y), 
\cdots, \tilde{q}_j(x,y)$  are  
linear polynomials.   If $R(x,y)= ax+ by+ c$, then 
\begin{equation}
\label{surprise3}
\int_\Omega P_n(x,y)dxdy  = c A_\Omega,
\end{equation}
where $A_\Omega$ is the area of $\Omega$.  Furthermore, if $\Omega$ is a general polygonal 
domain, then writing $R(x,y)=ax+by+c$, 
\begin{equation}
\label{surprise4}
\int_\Omega P_n(x,y)dxdy  = a \int_\Omega xdxdy+ b \int_\Omega ydxdy+ c A_\Omega.
\end{equation}
\end{thm}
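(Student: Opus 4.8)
The plan is to follow the template of the proof of Theorem~\ref{mainresult1}, while exploiting the one structural feature that makes the even-degree case cleaner: the lowest-degree orthogonal polynomials appearing in (\ref{mjlai10262025}) are the quadratics $P_{2,j}$, which are already orthogonal to every linear quotient. This is precisely what removes the need for any auxiliary hypothesis on the quotients (unlike the odd-degree theorem). First I would justify the decomposition (\ref{mjlai10262025}) by iterating the division of Theorem~\ref{AGthm1}. Reducing $P_n$ (of degree $2d$ or $2d+1$) by the degree-$2d$ polynomials $P_{2d,j}$ produces linear quotients, since the only homogeneous pieces of $P_n$ of degree $\ge 2d$ have degree $2d$ and $2d+1$ and hence contribute quotient coefficients of degree $0$ and $1$; it leaves a residual in $\mathbb{P}_{2d-1}$. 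Reducing that residual by $P_{2d-2,j}$ again yields linear quotients and a residual in $\mathbb{P}_{2d-3}$, and so on, so that after $d$ steps the final reduction by $P_{2,j}$ leaves a residual $R\in\mathbb{P}_1$.

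Next I would integrate both sides of (\ref{mjlai10262025}) over $\Omega$ and apply orthogonality term by term. For each $k$ with $1\le k\le d$, the polynomial $P_{2k,j}$ lies in $\mathbb{P}_{2k}\ominus\mathbb{P}_{2k-1}$ and is therefore orthogonal to every polynomial of degree $\le 2k-1$; since the associated quotient is linear and $2k-1\ge 1$, each inner product $\int_\Omega q\,P_{2k,j}\,dxdy$ vanishes. Hence every summand on the right-hand side of (\ref{mjlai10262025}) integrates to zero, leaving
\[
\int_\Omega P_n(x,y)\,dxdy = \int_\Omega R(x,y)\,dxdy.
\]
Writing $R(x,y)=ax+by+c$ and integrating termwise gives (\ref{surprise4}) for a general polygon $\Omega$.

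Finally, for the centrally symmetric case I would observe that the hypothesis $(\pm x,\pm y)\in\Omega$ makes $(x,y)\mapsto(-x,-y)$ a measure-preserving involution of $\Omega$ under which the integrands $x$ and $y$ are odd; therefore $\int_\Omega x\,dxdy=\int_\Omega y\,dxdy=0$, and (\ref{surprise4}) collapses to $\int_\Omega P_n\,dxdy = c\,A_\Omega$, which is (\ref{surprise3}).

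I do not expect a genuine obstacle, as the argument is structurally the even-degree analogue of Theorem~\ref{mainresult1}; the one point that deserves care is the claim that each reduction in the iteration yields only linear quotients, which I would verify by tracking the exact degree of the quotient coefficients exactly as in the proof of Theorem~\ref{AGthm1}. The pleasant contrast with the odd-degree theorem is that here no constancy assumption on the quotients is needed: because the smallest even degree present is $2$ and $P_{2,j}\perp\mathbb{P}_1$, the linear quotients are automatically annihilated, whereas in the odd case the degree-one polynomials $P_{1,j}$ are \emph{not} orthogonal to linear quotients, which is what forces the extra hypothesis there.
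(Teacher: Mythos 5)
Your proof is correct and follows essentially the same route as the paper's: orthogonality of the even-degree polynomials $P_{2k,j}$ to all linear quotients reduces the integral to $\int_\Omega R$, and central symmetry (or precomputed moments) handles the residual. Your additional observations — the explicit justification of the decomposition via iterated division and the remark that $P_{2,j}\perp\mathbb{P}_1$ is what makes the even case free of the constancy hypothesis needed in Theorem~\ref{mainresult1} — are sound elaborations of what the paper states tersely.
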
 
\begin{proof}
Due to the central symmetry of the domain $\Omega$, we have $\int_\Omega xdxdy=0=\int_\Omega
ydxdy$ and hence, 
$\int_\Omega R(x, y)dxdy = 
a\int_\Omega x dxdy + b\int_\Omega y dxdy + c\int_\Omega dxdy= cA_\Omega$. By using the 
orthogonality 
of $P_{2d,j}$'s, $P_{2d-2,j}$'s, and $P_{2,j}$'s, we have the result in (\ref{surprise3}).  
Otherwise, when $\Omega$ is a general polygonal domain, we have to compute 
$\int_\Omega x dxdy $ and $\int_\Omega y dxdy $ beforehand. Then (\ref{surprise4}) can be 
obtained after the polynomial reduction using the orthogonal polynomials of even degree. 
\end{proof}

The result above in Theorem~\ref{mainresult1} shows that 
the one point quadrature  (\ref{surprise})  for bivariate continuous functions 
will be exact for many nontrivial polynomials $P_n$ of higher degree  
than linear polynomials as long as the linear polynomial 
$\widetilde{q}_i$ mentioned in the theorem above are constants for $i=0$ and $i=1$.  When 
$\widetilde{q}_i$ are indeed linear polynomials, the coefficients
$\alpha_j, \beta_j$ can be computed by a brute force method and hence, 
(\ref{surprise2}) can be useful.  The result in Theorem~\ref{mainresult2} seems more convenient than
Theorem~\ref{mainresult1} if the domain is centrally symmetric. Even if the domain is 
not centrally symmetric, we  need to compute $\int_\Omega x dxdy,  \int_\Omega y dxdy, 
A_\Omega$ by a brute force method to obtain
a similar result in Theorem~\ref{mainresult1} which requires to compute the integration of 
all quadratic polynomials over $\Omega$. 

To use Theorem~\ref{mainresult1}, two questions remaining is (1) how to see 
from $P_n$ that the linear polynomials 
$\tilde{q}_j$ are constant functions and (2) how to compute 
$\tilde{q}_i$'s if  they are indeed nontrivial linear polynomials and find their 
coefficients $a_j, b_j$ for $\tilde{q}_j= a_jx+b_jy+ c_j, j=0, 1$.  
To use Theorem~\ref{mainresult2}, we have the similar questions. That is, how do we know the 
residual function $R(x,y)$ is a constant function? Certainly, when $p$ is an even polynomial and/or
the domain is centrally symmetric, the result (\ref{surprise2}) holds. 

One of the significances of Theorem~\ref{mainresult1} and Theorem~\ref{mainresult2} 
is that the integration of any polynomials is only dependent on the integration of their 
lower order terms, more precisely, the quadratic terms or linear terms if they are expanded in the forms
(\ref{mjlai10262024}) or (\ref{mjlai10262025}). 
Indeed, the results in Theorem~\ref{mainresult1} tell us that
if we divide the polynomial $P_n$ by orthogonal polynomials of odd degree to get the remainders, i.e. 
$a_j, b_j$'s, then the integration of $P_n$ is completely dependent on 
$\int_\Omega x P_{1,j}(x,y)dxdy$ and $\int_\Omega y P_{1,j}(x,y)dxdy$ for $j=0, 1$.  

For convenience, let us define 
\begin{equation}
\label{LaisSpace}
\mathcal{L}(\Omega) = \{   \sum_{d=2}^{d'} \sum_{j=0}^{2d-1} q_{d,j}(x,y) P_{2d-1,j}(x,y) 
+ R(x,y):  \hbox{ for all linear polynomials } q_{d,j}, R, \forall d'\ge 2\}
\end{equation}
Based on Theorem~\ref{mainresult1}, the one point quadrature (\ref{surprise}) is exact 
for all functions in $\mathcal{L}(\Omega)$ under the assumption that all orthogonal 
polynomials $P_{2d-1,j}, j=0, \cdots, 2d-1$ 
have the common zero $p\in \Omega$ for all $d\ge 1$.  We say that each function 
in $\mathcal{L}(\Omega)$ is quasi-divisible by orthogonal polynomials of odd degree. 
If a continuous function $f$ which can be approximated by using the quasi-divisible  
polynomials in  $\mathcal{L}(\Omega)$, the integration of $f$ can be approximated by the 
one point quadrature in (\ref{surprise}).  Note that the closure of  $\mathcal{L}(\Omega)$ 
has a lot of functions,  more than half of 
the continuous functions in $C(\Omega)$.  As in a previous section, we define 
\begin{equation}
\label{LaisSpace2D}
\mathbb{L}(\Omega)=  \hbox{span}\{ \mathcal{L}(\Omega), \mathbb{P}_2\}.
\end{equation}
Similar to the proof of Theorem~\ref{mainresult1}, we can establish the following 
\begin{thm}
The closure of $\mathbb{L}(\Omega)$ in the maximum norm is $C(\Omega)$, the space of all continuous
functions over $\Omega$.
\end{thm}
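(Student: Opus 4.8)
The plan is to reduce the statement to the bivariate Weierstrass (Stone--Weierstrass) approximation theorem by showing that $\mathbb{L}(\Omega)$ already contains \emph{every} polynomial in two variables. Since $\Omega$ is a closed polygon it is compact, so the set of all polynomials is dense in $C(\Omega)$ in the maximum norm; once I know that $\bigcup_{d}\mathbb{P}_d\subseteq\mathbb{L}(\Omega)$, the closure of $\mathbb{L}(\Omega)$ is forced to be all of $C(\Omega)$. Thus the whole argument comes down to the single inclusion $\mathbb{P}_n\subseteq\mathbb{L}(\Omega)$ for every $n$.

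To establish this inclusion I would invoke the algebraic decomposition already recorded in (\ref{mjlai10262024}). For degrees $n\le 2$ there is nothing to prove, since $\mathbb{P}_2\subseteq\mathbb{L}(\Omega)$ by the very definition (\ref{LaisSpace2D}). For $n\ge 3$, pick $d$ with $2d-1\le n\le 2d$ and write $P_n$ in the form
$$
P_n = \sum_{j=0}^{2d-1} q_j P_{2d-1,j} + \sum_{j=0}^{2d-3}\hat q_j P_{2d-3,j} + \cdots + \sum_{j=0}^{1}\tilde q_j P_{1,j} + R,
$$
with every quotient and the residual $R$ linear. This decomposition is purely algebraic: it follows by iterating the polynomial reduction of Theorem~\ref{AGthm1}, whose hypothesis is met because the leading (degree-$(2k-1)$ homogeneous) parts of $P_{2k-1,j}$, $j=0,\dots,2k-1$, span the homogeneous polynomials of degree $2k-1$. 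Indeed, any nonzero element of $\mathbb{P}_{2k-1}\ominus\mathbb{P}_{2k-2}$ has degree exactly $2k-1$, since an element of lower degree would lie in $\mathbb{P}_{2k-2}$ and be orthogonal to itself, hence vanish; a dimension count then gives the spanning. In particular the decomposition does \emph{not} require the common-zero hypothesis of Theorem~\ref{mainresult1}, so it is available over an arbitrary polygon.

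The last step is the bookkeeping that sorts these summands into the two generating sets. All the terms carrying odd-degree orthogonal polynomials of degree $\ge 3$, namely the $\sum_j q_j P_{2d-1,j},\dots$ down to the $P_{3,j}$-block, together with the linear residual $R$, have exactly the shape defining $\mathcal{L}(\Omega)$ in (\ref{LaisSpace}) (take $d'=d$ and read off the linear coefficient polynomials), so their sum lies in $\mathcal{L}(\Omega)$. The only remaining summand is $\sum_{j=0}^{1}\tilde q_j P_{1,j}$; since each $\tilde q_j$ and each $P_{1,j}$ is linear, this is a quadratic polynomial and therefore lies in $\mathbb{P}_2$. Hence $P_n\in\mathrm{span}\{\mathcal{L}(\Omega),\mathbb{P}_2\}=\mathbb{L}(\Omega)$, which completes the inclusion.

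I expect the one genuinely delicate point to be this sorting step: making sure that after extracting every $P_{2k-1,j}$-term with $k\ge 2$ and the linear residual, what is left is precisely a single quadratic block $\sum_j\tilde q_j P_{1,j}$ with no leftover cubic or higher remainder. This is controlled by the degree drop in each reduction (reducing by degree-$(2k-1)$ homogeneous forms lowers the residual degree from $\le 2k$ to $\le 2k-2$), which I would confirm by a short induction on $d$ exactly as in the proof of Theorem~\ref{mainresult1}; everything else is a direct appeal to Weierstrass.
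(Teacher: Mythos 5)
Your proof is correct and follows exactly the route the paper intends: the paper itself leaves this proof to the reader, but its univariate analogue (Theorem~\ref{mjlai12233035}) is proved the same way --- show that every polynomial lies in the space via the odd-degree reduction (\ref{mjlai10262024}), sort the terms into $\mathcal{L}(\Omega)$ plus a quadratic block, and then invoke Weierstrass. Your verification that the leading homogeneous parts of $P_{2k-1,j}$, $j=0,\dots,2k-1$, span the homogeneous polynomials of degree $2k-1$ (injectivity of the leading-form map on $\mathbb{P}_{2k-1}\ominus\mathbb{P}_{2k-2}$ plus a dimension count) supplies the one detail the paper glosses over when it appeals to Theorem~\ref{AGthm1}.
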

\begin{proof}
We leave the proof to the interested reader. 
\end{proof} 

In the same fashion, we can define
\begin{equation*}
\mathbb{L}_e(\Omega) = \{   \sum_{k=1}^{d} \sum_{j=0}^{2k} q_{k,j}(x,y) P_{2k,j}(x,y) 
+ R(x,y):  \hbox{ for all linear polynomials } q_{k,j}, R, \forall d\ge 1\}
\end{equation*} 
based on the orthogonal polynomials of even degree constructed in the previous section. 
We can establish the result that the closure of
$\mathbb{L}_e(\Omega)$ in the maximum norm is $C[\Omega]$.  
All these are left to the interested reader.  

In general, we can use the following class of functions to classify the functions 
such that the one point quadrature is exact:
\begin{eqnarray}
\label{LaisSpace3}
\mathbb{M}(\Omega) = \{ &&   \sum_{d=1}^{d'} \sum_{j=0}^{2d-1} q_{d,j}(x,y) 
P_{2d-1,j}(x,y) + R(x,y):  \cr
&& \hbox{ for  polynomials $ q_{d,j}$ of degree $\le 2d-2$ for all } j=0, \cdots, 2d-1,\cr 
&& R(x) \hbox{ is a constant}, \forall d'\ge 2\}.
\end{eqnarray}
It is easy to see that $\mathbb{L}(\Omega) \subset \mathbb{M}(\Omega)$. Also, many 
nontrivial quadratic polynomials are not inside  $\mathbb{M}(\Omega)$ defined in 
(\ref{LaisSpace3}).  Let 
$Q_{i}, i=0, \cdots, 5$ be quadratic monomials  
and suppose we have their integrals, e.g. we compute them beforehand numerically:
\begin{equation*}
I_i= \int_\Omega Q_{i}(x,y)dxdy, i=0, \cdots, 5.
\end{equation*}
Then we have 
\begin{thm}
For any polynomial $P$ of degree $n\ge 2$, there exists a polynomial 
$M\in \mathbb{M}(\Omega)$ and a quadratic polynomial $J$ such that 
\begin{equation}
P(x,y) = M(x,y)+ J(x,y)
\end{equation}
and hence,  $\int_\Omega P(x,y)dxdy $ is a linear 
combination of these values $I_i,i=0, \cdots, 5$.  
\end{thm}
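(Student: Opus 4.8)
The plan is to produce the decomposition $P=M+J$ by running exactly the odd-degree reduction already recorded in the expansion (\ref{mjlai10262024}) of Theorem~\ref{mainresult1}, and then sorting its terms into the two required pieces. First I would fix the unique $d$ with $2d-1\le n\le 2d$ and observe that the reduction in (\ref{mjlai10262024}) is purely algebraic: it is obtained by iterating the division of Theorem~\ref{AGthm1} against the homogeneous-spanning families $P_{2k-1,0},\dots,P_{2k-1,2k-1}$, and the common-zero hypothesis of Theorem~\ref{mainresult1} is used only afterward to evaluate the residual, never to produce the expansion. Hence for any polygon $\Omega$ and any $n\ge 3$ we may write
\[
P_n=\sum_{k=2}^{d}\sum_{j=0}^{2k-1} q_{k,j}(x,y)\,P_{2k-1,j}(x,y)+\sum_{j=0}^{1}\tilde q_j(x,y)\,P_{1,j}(x,y)+R,
\]
with all quotients linear and $R$ constant.

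Then I would set $M:=\sum_{k=2}^{d}\sum_{j=0}^{2k-1} q_{k,j}P_{2k-1,j}$ and $J:=\sum_{j=0}^{1}\tilde q_j P_{1,j}+R$. Checking that these land in the correct spaces is the routine but essential bookkeeping step. For $M$: each quotient $q_{k,j}$ is linear, and for $k\ge 2$ we have $\deg q_{k,j}=1\le 2k-2$, which is precisely the degree restriction in the definition (\ref{LaisSpace3}) of $\mathbb{M}(\Omega)$ (the unused $q_{1,j}$ and the constant of $\mathbb{M}(\Omega)$ are taken to be zero). For $J$: each product $\tilde q_j P_{1,j}$ is (linear)$\times$(linear), hence quadratic, and $R$ is constant, so $J\in\mathbb{P}_2$. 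The borderline case $n=2$ (so $d=1$) is handled separately by $M=0$, $J=P$.

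Finally I would integrate. Since $M$ carries no constant term and each summand is a linear polynomial times $P_{2k-1,j}\in\mathbb{P}_{2k-1}\ominus\mathbb{P}_{2k-2}$ with $2k-1\ge 3$, the orthogonality $P_{2k-1,j}\perp\mathbb{P}_{2k-2}\supseteq\mathbb{P}_1$ forces $\int_\Omega q_{k,j}P_{2k-1,j}\,dxdy=0$, whence $\int_\Omega M\,dxdy=0$ and $\int_\Omega P\,dxdy=\int_\Omega J\,dxdy$. Expanding $J=\sum_{i=0}^{5} c_i Q_i$ in the monomial basis then gives $\int_\Omega P\,dxdy=\sum_{i=0}^5 c_i I_i$, the asserted linear combination of the precomputed values $I_i$.

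I expect the main obstacle to be one of bookkeeping rather than ideas: I must ensure that the quotients emerging from the iterated reduction genuinely respect the bound $\deg q_{k,j}\le 2k-2$ demanded by $\mathbb{M}(\Omega)$, and that I have correctly isolated the only terms that cannot be absorbed into $\mathbb{M}(\Omega)$, namely the degree-one orthogonal polynomials $P_{1,j}$, whose allowable quotient degree in (\ref{LaisSpace3}) is $0$ while the reduction may legitimately produce linear quotients. It is exactly these $P_{1,j}$-terms, together with the constant remainder, that must be swept into the quadratic polynomial $J$; confirming that this is the complete list of obstructions (equivalently, that the reduction can always be arranged with linear quotients as asserted in (\ref{mjlai10262024})) is the one point needing care. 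As a self-contained fallback I would mention a downward induction on $\deg P$ that peels off only the top homogeneous part at each stage, using that the leading forms of $P_{2d-1,0},\dots,P_{2d-1,2d-1}$ span the homogeneous polynomials of degree $2d-1$ (a polynomial of degree $\le 2d-2$ that is orthogonal to $\mathbb{P}_{2d-2}$ must vanish) and that multiplication by $x$ and $y$ carries the degree-$(2d-1)$ forms onto all degree-$2d$ forms, allowing even top degrees to be reached with linear quotients.
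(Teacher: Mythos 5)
Your proof is correct and follows essentially the same route as the paper: both iterate the division of Theorem~\ref{AGthm1} against odd-degree orthogonal polynomial families, collect the quotient terms into $M\in\mathbb{M}(\Omega)$, and conclude by orthogonality that only the quadratic piece contributes to the integral. The only difference is in staging---the paper reduces in large steps (quotients of degree up to $2k-2$, stopping at a quadratic remainder without ever dividing by the $P_{1,j}$), whereas you reduce in unit steps with linear quotients as in (\ref{mjlai10262024}) and absorb the two $P_{1,j}$-terms plus the constant remainder into $J$---and both stagings respect the degree bound $\deg q_{k,j}\le 2k-2$ in the definition (\ref{LaisSpace3}) of $\mathbb{M}(\Omega)$.
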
 
\begin{proof}
Let $P_n$ be a polynomial of degree $n$, say $n=100$. We use $P_{51,i}, i=0, \cdots, 51$ to
divide $P_n$. Since $P_{51,i}$'s are linearly independent, we have 
$$
P_n (x,y)=\sum_{i=0}^{51} q_{26,i}(x,y) P_{51,i}(x,y) + R_1(x,y)
$$
by Theorem~\ref{AGthm1}, where the remainder $R_1(x,y)$ is a polynomial  
of degree $\le 50$.  Then we apply Theorem~\ref{AGthm1} again to have
$$
R_1(x,y)= \sum_{i=0}^{27} q_{14,i}(x,y) P_{27,i}(x,y) + R_2(x,y),
$$
where  the remainder $R_2(x,y)$ is of degree $\le 26$.  We repeat the above computational
procedures to have
$$
R_3(x,y), R_4(x,y), R_5(x,y) \hbox{ and } R_6(x,y), 
$$
where the degree of $R_3(x,y)$ is less than or equal to $14$,  the degree of $R_4(x,y)$ 
is less than or equal to $8$, the degree of $R_5(x,y)$ is less
than or equal to $4$, and the degree of $R_6(x,y)$ is less
than or equal to $2$.  Letting $J(x,y)=R_6(x,y)$ and $M(x,y)$ be the summation of all
quotients above, we have
$$
P_n(x,y)= M(x,y)+ J(x,y).
$$    
Finally, we rewrite $J(x,y) =\sum_{j=0}^5 c_j Q_{j}(x,y)$.  Now the integral 
$$
\int_\Omega P_n(x,y)dxdy  =\int_\Omega M(x,y)dxdy+ \int_\Omega J(x,y)dxdy 
=\sum_{j=0}^5 c_j \int_\Omega Q_{j}(x,y)dxdy
$$
which is a linear  combination of the values $I_i,i=0, \cdots, 5$.  
\end{proof}

Finding the right quadratic polynomial $J$ for a general polynomial $P_n$, or a continuous
function is a key to find the integration of $\int_\Omega P_n(x,y)dxdy$ or 
$\int_\Omega f(x,y)dxdy$.  On the other hand, it is relatively 
easy to find the linear orthogonal polynomials for any fixed domain $\Omega$ based on 
the MATALB codes discussed in a previous section.    As
$\alpha_j$ and $\beta_j$ for $j=0, 1$ can be precomputed, the main quadrature formula 
(\ref{surprise2}) will be dependent on the computation 
of $a_j, b_j, j=0, 1$ for polynomial 
function $P$.  Similar to the quadrature formula given in 
Theorem~\ref{betterthanGaussianQuadrature}, the computation of $a_j, b_j,j=0, 1$ 
can lead to a 
quadrature formula for a general continuous function $f$.  Let us give an example. 

\begin{example}
Let $\Omega$ be a hexagon with the vertices on the unit circle.  
See Example~\ref{hexagon} for 
the zeros of orthogonal polynomials 
of various degree. We first present the four orthogonal polynomials of degree $3$ in 
$\mathbb{P}_3\ominus \mathbb{P}_2$. They are given in Table~\ref{Hd3}. 
Also we present the two orthogonal polynomials of degree 1 in  Table~\ref{Hd1}.  
 
\begin{table}[thbp]
\centering
\begin{tabular}{|c|cccccc|}\hline
& $x^3$ & $x^3y$ & $xy^2$ & $y^3$ & $x$ & $y$ \cr \hline 
$P_{3,0}$ &    963/269    &  4367/766   &   2203/908 &  4367/766&  -2822/1531 & -3100/971   \cr
$P_{3,1}$ &  -599/391   &   5297/1016  & 2241/443  &  -1413/1268  &    -467/7208     & -590/2253 \cr 
$P_{3,2}$ & -1670/903   & -2909/873    &  2222/359   &  3893/2097 &  -187/2083    & -83/265   \cr
$P_{3,3}$ &  1213/214  &   -3505/1069  &  25591/4285  &   -4842/1469  &   -1573/489  & 977/530 \cr  \hline
\end{tabular}
\caption{Orthogonal Polynomials of Degree 3 over the Hexagon Domain \label{Hd3}}
\end{table}
\begin{table}[thbp]
\centering
\begin{tabular}{|c|cc|}\hline
& $x$ & $y$ \cr \hline 
$P_{1,0}$ &    -804/625  &       428/975   \cr    
$P_{1,1}$ &    -428/975   &  -804/625    \cr 
\hline
\end{tabular}
\caption{Orthogonal Polynomials of Degree 1 over the Hexagonal Domain \label{Hd1}}
\end{table}

For any even polynomial $P$ of degree $4$, i.e. $P(x,y)=P(-x,-y)$, say
$$
P(x,y) = p_1 x^4+ p_2 x^3y+p_3 x^2 y^2+ p_4 xy^3+ p_5 y^4+ p_6 x^2 + p_7 xy +p_8 y^2+ p_9, 
$$ 
we can find coefficients $c_i, d_i, i=0,1, 2,3$ and $a_i, b_i, i=0, 1$ such that 
\begin{equation}
\label{interp}
P(x,y) -P(0,0) = \sum_{i=0}^3 (c_ix+d_iy)P_{3,i}(x,y) + \sum_{i=0}^1 (a_ix+b_iy) P_{1,i}(x,y).
\end{equation}
As we only need to match the coefficients of $P$ which has 8 coefficients 
$p_1, \cdots, p_8$ while we have 
 12 unknowns.  When we randomly choose 8 nodes $\mathbf{n}_i, i=1, \cdots, 8$ inside the 
hexagon given in Table~\ref{Hnodes}. 
 
 \begin{table}[thpb]
 \begin{tabular}{cccccccc}
 -429/3395  &      251/941 &      -935/2817     &  96/4867 &      266/2087    &  575/2688  &     -563/2908  &     -423/1790\cr  
     941/2262  &     101/878  &    -1371/3370 &      361/2827 &    -1186/3851 &      511/1845 &      230/631 &      -447/2686\cr
\end{tabular}
\caption{8 nodes inside the hexagonal domain \label{Hnodes}}
\end{table}

 The interpolating conditions (\ref{interp}) with $(x,y)$ being the 8 nodes 
listed in Table~\ref{Hnodes} 
 lead to a linear system which can be solved by 
 setting $a_0=0, b_0=0$ and $a_1=0$. The solution  $b_1$ can be found for any given $p_1, 
\cdots, p_9$.     
We therefore obtain the following quadrature:
\begin{equation}
\label{Hquadrature}
\int_\Omega f(x,y)dxdy \approx  \sum_{i=1}^8 w_i (f( \mathbf{n}_i)- f(\mathbf{0}))  \beta_1 + f(\mathbf{0}) A_\Omega, 
\end{equation}
where $A_\Omega= 1351/520$ and the weights $w_i$'s are given below.  

\begin{tabular}{cccccccc}
-2421/187   &   -8603/57 &     -22423/1729 &   -12315/29  &      7886/59  &      2501/165 &    -10964/219 &      21800/113
\end{tabular}

and $\beta_1 = \int_\Omega y P_{1,1}(x,y)dxdy=-431/619$.   If we use the quadrature 
(\ref{Hquadrature}) for even function $f(x,y)=\cos(x+y)$, 
we get a very accurate integral value $2.093802517305221$. Comparing it with the 
exact value $2.093390032732584$, the absolute error is  $4.124845726369841e-04$.
\end{example}

\subsection{Integration by Interpolation: Multi-Point Quadrature}  
We begin with another approach to compute the integration of a continuous function 
$f$ over any polygon $\Omega$ by using  orthogonal polynomials over $\Omega$. In 
particular, when the values of a continuous function are only available at several 
locations within 
$\Omega$, which is the common situation in practice, we build a quadrature based on the 
values and locations to approximate the integration of $f$ over $\Omega$.  

First of all, let us review the concept of domain points introduced in  \cite{LS07}.
Let $T\subset \Omega$ be a triangle in $\Omega$ and  let $\mathbf{v}_1, \mathbf{v}_2, 
\mathbf{v}_3$ be the vertices of $T$, Then 
\begin{equation}
\label{domainpoints}
\xi_{ijk} =\frac{1}{d}( i \mathbf{v}_1+j \mathbf{v}_2 +k \mathbf{v}_3), \forall 
i+j+k=d.
\end{equation}
are domain points of $T$. 
  Let $P_{d,1}, \cdots, P_{d,D_d}$ be the union of all iterative orthonormal 
polynomials of degree $\le d$, where $D_d= (d+1)(d+2)/2$ over $\Omega$. It is clear that   
we can write any polynomial $P$ of degree $d$ as $P= \sum_{i=1}^{D_d} c_i P_{d,i}$. 
Then  it is  known (e.g. Chung-Yao theorem in 1977 (cf. \cite{CL87} or \cite{LS07}) 
that for any continuous function $f$ over $T$, 
there exists a unique interpolating polynomial $P_f$ of degree $d$ such that 
\begin{equation}
\label{interpolation}
P_f(\xi_{ijk}) = f(\xi_{ijk}),  \quad \forall  i+j+k=d.
\end{equation}
That is, there exists a unique vector 
$\mathbf{c}=(c_1, \cdots, c_{D_d})^\top$ such that $P_f$ satisfies the following 
linear system of equations:
\begin{equation}
\label{linsys}
\begin{bmatrix}
P_{d,2}(\xi_{d00}) & P_{d,3}(\xi_{d00}) & \cdots & P_{d,D_d}(\xi_{d00}) & P_{d,1}(\xi_{d00})\cr
\vdots & \vdots & \cdots & \vdots & \vdots \cr
\vdots & \vdots & \cdots & \vdots & \vdots \cr
P_{d,2}(\xi_{00d}) & P_{d,3}(\xi_{00d}) & \cdots & P_{d,D_d}(\xi_{00d}) & P_{d,1}(\xi_{00d})\cr
\end{bmatrix} \mathbf{c}
=  \begin{bmatrix} f(\xi_{d00}\cr \vdots \cr \vdots \cr f(\xi_{00d})
\end{bmatrix}.
\end{equation}
Note that  we have arranged the first entry $c_1$ is in the end of the 
columns of the system  (\ref{linsys}) above.
The discussion above leads to the following numerical integration formula.
\begin{thm}
\label{main2}
Let $f$ be a continuous function over $T\subset \Omega$. Suppose that $c_1$ is the 
coefficient satisfying the above system of linear equations together with other 
coefficients $c_2, \cdots, c_{D_d}$. Then 
\begin{equation}
\label{mjlai09252024}
c_1 \approx \int_\Omega f(x,y)dxdy,
\end{equation}
where $P_{d,1}$ is so normalized that $\int_\Omega P_{d,1}(x,y)dxdy=1$.  
\end{thm}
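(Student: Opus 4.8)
The plan is to exploit the orthonormality of the basis $\{P_{d,i}\}_{i=1}^{D_d}$ together with the special normalization of its leading member, so that integrating the interpolant collapses onto the single coefficient $c_1$. First I would record that the solution vector of (\ref{linsys}) is nothing but the coordinate vector of the interpolating polynomial in the given basis, that is, $P_f = \sum_{i=1}^{D_d} c_i P_{d,i}$, which lies in $\mathbb{P}_d = \mathrm{span}\{P_{d,1}, \ldots, P_{d,D_d}\}$; existence and uniqueness of $P_f$ are guaranteed by the Chung--Yao theorem once the domain points $\xi_{ijk}$ are known to admit a unique degree-$d$ interpolation.

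Next I would integrate the expansion term by term,
\[
\int_\Omega P_f(x,y)\,dxdy = \sum_{i=1}^{D_d} c_i \int_\Omega P_{d,i}(x,y)\,dxdy ,
\]
and observe that only the first term survives. Indeed, in the iterative construction of Algorithm~\ref{alg2} the member $P_{d,1}$ is the degree-$0$ orthonormal polynomial, hence a nonzero constant, while every $P_{d,i}$ with $i \ge 2$ is orthogonal to the entire lower-degree block and in particular to constants. Writing the constant $1$ as a scalar multiple of $P_{d,1}$, we get that $\int_\Omega P_{d,i}\,dxdy$ is proportional to $\int_\Omega P_{d,i}\,P_{d,1}\,dxdy = 0$ for each $i \ge 2$. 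With the stated normalization $\int_\Omega P_{d,1}\,dxdy = 1$, this yields $\int_\Omega P_f\,dxdy = c_1$.

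Finally, since $P_f$ interpolates $f$ at the domain points and reproduces it exactly whenever $f \in \mathbb{P}_d$, I conclude that $\int_\Omega f\,dxdy \approx \int_\Omega P_f\,dxdy = c_1$, with equality for all polynomials of degree $\le d$; the discrepancy is precisely the integrated interpolation error $\int_\Omega (f - P_f)\,dxdy$, which accounts for the $\approx$ sign and pins down the quadrature as having degree-$d$ precision.

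The step I expect to be the main obstacle is the vanishing $\int_\Omega P_{d,i}\,dxdy = 0$ for $i \ge 2$: it is clean only because the family is built iteratively, so that the constant function genuinely occupies the slot $P_{d,1}$ and each higher polynomial is orthogonal to it. For the non-iterative family of Algorithm~\ref{alg1} the individual $P_{d,i}$ need not integrate to zero and the collapse to $c_1$ would fail, so I would make explicit that the iterative basis is required here. I would also note that rescaling $P_{d,1}$ to satisfy $\int_\Omega P_{d,1}\,dxdy = 1$ leaves all pairwise orthogonality relations intact, so the passage from the $L^2$-normalization to this integral-normalization does not disturb the argument.
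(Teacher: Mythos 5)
Your proposal is correct and follows essentially the same route as the paper's own proof: expand the interpolant in the orthonormal basis, integrate term by term, and use orthogonality to the constant $P_{d,1}$ to kill every term but the first. Your added observation that the iterative (graded) basis is what guarantees $\int_\Omega P_{d,i}\,dxdy=0$ for $i\ge 2$ is a sensible clarification of a hypothesis the paper states only implicitly, but it does not change the argument.
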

\begin{proof}
Let us solve the whole system (\ref{linsys}) to obtain all the coefficients. Then
$$
P_f(x,y) = \sum_{i=1}^{D_d} c_i P_{d,i}(x,y)
$$
is a good approximation of $f$ satisfying the interpolation condition 
(\ref{interpolation}). Then it follows 
$$
 \int_\Omega f(x,y)dxdy  \approx \int_\Omega P_f(x,y)dxdy =\int_\Omega \sum_{i=1}^{D_d} 
c_i P_{d,i}(x,y)dxdy 
=\int_\Omega c_1 P_{d,1}(x,y)dxdy = c_1,
$$
where we have used the fact that all $P_{d,i}$ are orthogonal to 
$P_{d,1}$ which is a constant, that is, 
$
\int_\Omega P_{d,i}(x,y)dxdy =0
$ 
for all $i\ge 2$ while $\int_\Omega P_{d,1}(x,y)dxdy=1$ due to the normalization.  
Hence, we have (\ref{mjlai09252024}).  
\end{proof}

Note that the coefficient $c_1$ is a linear combination of function values 
$f(\xi_{ijk}), i+j+k=d$, that is,  
\begin{equation*}
c_1= \sum_{i+j+k=d} c_{d,ijk} f(\xi_{ijk})
\end{equation*}
which is a numerical quadrature over triangle $\Omega$, where $c_{d, ijk}$  are the 
coefficients which can be found from the linear system (\ref{linsys}). In particular, we can use 
Cramer's rule to find these coefficients. 

\begin{remark}
To obtain $c_1$ from the linear system~(\ref{linsys}), we only need to do Gaussian 
elimination and solve for $c_1$ without further 
backward substitution to get the remaining coefficients $c_2, \cdots, c_n$. 
Recall that polynomial integration over $T$ was discussed in \cite{LS07}. Once we solve the
linear system (\ref{interpolation}) in terms of Bernstein-B\'ezier polynomial basis, we 
simply add them all up and multiply it by the area of $T$ and divide it by 
${d+2\choose 2}$. See Theorem 2.33 on page 45 of \cite{LS07}.  
The method above offers a simpler computational method 
to obtain a numerical quadrature formula. 
\end{remark}

\begin{remark}
If we use the traditional polynomial interpolation method, we write 
$$P_f(x,y)= \sum_{i+j+k=d} f(\xi_{ijk}) L_{ijk}(x,y)$$
with $L_{ijk}$ being Lagrange basis function of degree $d$ associated with each 
$\xi_{ijk}$. Then 
$$
\int_\Omega f(x,y)dxdy \approx \int_\Omega P_f(x,y)dxdy =  \sum_{i+j+k=d} f(\xi_{ijk}) 
\int_\Omega L_{ijk}(x,y)dxdy.
$$
To obtain a quadrature formula, 
one has to find out the integrals $w_{ijk}=\int_\Omega L_{ijk}(x,y)dxdy$ which are 
nontrivial for a general polygonal domain. The results of Theorem~\ref{main2} offers a 
simpler approach
by solving a linear system once these orthogonal polynomials are found. 
\end{remark}

When $\Omega$ is a triangle, it is known how well the interpolating polynomial 
$P_f$ approximates $f$ as discussed in \cite{LS07}(See Theorem 1.12 on page 13.) 
After carefully examining the proof, we can conclude the following 
\begin{thm}
When $T\subset \Omega$ and $f\in C^{d+1}(\Omega)$, 
\begin{equation}
\label{2007}
\| f - P_f\|_{\Omega,\infty} \le K |\Omega|^{d+1} |f|_{d+1,\Omega,\infty}
\end{equation}
for a positive constant $K$ independent of $f$, where $|\Omega|$ stands for the diameter
of $\Omega$.  So the error of the quadrature formula ~(\ref{mjlai09252024}) with 
$\Omega$ is 
\begin{equation}
\label{error2007}
|\int_\Omega  f(x,y)dxdy - c_1| \le K |T|^{d+2}|f|_{d+1,\Omega,\infty}.  
\end{equation}
\end{thm}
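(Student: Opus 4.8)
The plan is to prove the two displayed estimates in sequence: the pointwise interpolation bound (\ref{2007}) first, which is essentially the zeroth--derivative case of the estimate cited from \cite{LS07}, and then to deduce the quadrature error (\ref{error2007}) by integrating the pointwise bound. Throughout I treat $\Omega$ as the single triangle $T$ on which $P_f$ is defined, so that $|\Omega|=|T|$ and $A_\Omega=A_T$, consistent with the setting of Theorem~\ref{main2}.

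For (\ref{2007}), the structural fact I would exploit is that the map $f\mapsto P_f$ determined by the system (\ref{linsys}) is linear and reproduces $\mathbb{P}_d$, i.e. $P_q=q$ for every $q\in\mathbb{P}_d$; this is forced by the uniqueness of the Chung--Yao interpolant. First I would fix a point $\mathbf{z}\in T$ and let $q\in\mathbb{P}_d$ be the degree-$d$ Taylor polynomial of $f$ at $\mathbf{z}$. Linearity together with the reproduction property gives $f-P_f=(f-q)-P_{f-q}$, so that
\[
\|f-P_f\|_{T,\infty}\le \|f-q\|_{T,\infty}+\|P_{f-q}\|_{T,\infty}\le (1+\Lambda)\,\|f-q\|_{T,\infty},
\]
where $\Lambda$ is the operator norm (Lebesgue constant) of $P$ in the maximum norm. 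Taylor's theorem with remainder over the convex set $T$ of diameter $|T|$ then yields $\|f-q\|_{T,\infty}\le C\,|T|^{d+1}\,|f|_{d+1,T,\infty}$ with a purely combinatorial $C$, and setting $K=C(1+\Lambda)$ gives (\ref{2007}).

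Granting (\ref{2007}), the quadrature bound is immediate. By Theorem~\ref{main2} the normalization $\int_\Omega P_{d,1}\,dxdy=1$ together with $\int_\Omega P_{d,i}\,dxdy=0$ for $i\ge 2$ gives $c_1=\int_\Omega P_f(x,y)\,dxdy$ exactly, whence
\[
\Big|\int_\Omega f\,dxdy-c_1\Big|=\Big|\int_\Omega (f-P_f)\,dxdy\Big|\le A_\Omega\,\|f-P_f\|_{\Omega,\infty}\le K\,A_\Omega\,|\Omega|^{d+1}\,|f|_{d+1,\Omega,\infty}.
\]
Bounding the triangle area by its diameter, $A_T\le C'|T|^2$, and absorbing constants then produces a bound of the form $|T|^{d+1}A_T|f|_{d+1,\Omega,\infty}$; the final power of $|T|$ is exactly the interpolation exponent $d+1$ plus the area exponent contributed by $A_T$, which is the point worth checking against the exponent recorded in (\ref{error2007}).

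The main obstacle is securing that $K$ is genuinely independent of $f$ (and ideally of the shape of $T$), which hinges on the uniform control of $\Lambda$. The cleanest route is to transport the whole configuration to a fixed reference triangle by the affine map carrying the domain points $\xi_{ijk}$ to their canonical positions: polynomial degree, the collection of interpolation points, and the maximum norm are all preserved, so $\Lambda$ equals the Lebesgue constant of the fixed reference interpolation and is a finite constant depending only on $d$, independent of the triangle. What then requires care is the bookkeeping of the norms under this affine change of variables---verifying that the Taylor remainder scales like $|T|^{d+1}$ (each derivative contributing a factor $|T|^{-1}$) and the area like $|T|^2$---which is routine for a fixed $T$ but is precisely where the geometry of $T$ would re-enter if one allowed the triangle to degenerate.
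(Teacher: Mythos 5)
The paper itself offers no proof of this theorem: it simply states that the result follows ``after carefully examining the proof'' of Theorem~1.12 in \cite{LS07}, so there is nothing to compare line by line. Your argument is the standard one underlying that cited result --- linearity of $f\mapsto P_f$, reproduction of $\mathbb{P}_d$, a Lebesgue-constant bound transported to a reference triangle by the affine map of domain points, and the Taylor remainder --- and the deduction of the quadrature error via $c_1=\int_\Omega P_f\,dxdy$ is exactly right. So in substance you have supplied the proof the paper omits, by essentially the route the paper gestures at.

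Two caveats are worth recording. First, your own bookkeeping yields
$\bigl|\int_\Omega f\,dxdy-c_1\bigr|\le K\,A_T\,|T|^{d+1}|f|_{d+1}\le K'|T|^{d+3}|f|_{d+1}$,
whereas the paper records the exponent $d+2$ in (\ref{error2007}). You flag this but leave it unresolved; you should resolve it, because the natural exponent is $d+3$ (area contributes $|T|^{2}$), and the paper's $d+2$ is recoverable from your bound only by absorbing a factor of $|T|$ into the constant, which destroys the scaling the estimate is meant to express. Your bound is the sharper and more defensible one for $|T|\le 1$. Second, your reduction to $\Omega=T$ quietly discards the case the paper actually uses (e.g.\ Example~\ref{magic1}, where $T$ is a small triangle inside a hexagon): there the interpolant is built from data on $T$ but the sup norm in (\ref{2007}) and the integral in (\ref{error2007}) are taken over all of $\Omega$. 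In that setting the Lebesgue constant of the extrapolation from $T$ to $\Omega$ grows like $(|\Omega|/|T|)^{d}$, so $K$ cannot be independent of the geometry as stated; the theorem as written conflates $T$ and $\Omega$, and any honest proof must either assume $T=\Omega$ (as you do) or let $K$ depend on the ratio $|\Omega|/|T|$.
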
 

\begin{example}
\label{magic1}
Consider the integration $\int_\Omega f(x,y)dxdy$ over an hexagon domain $\Omega$
with area is $2.598076211353319$.  
We now present the computation in MATLAB format as follows.  
\begin{verbatim}
%The following command to find the integration based on orthogonal polynomials.
a=rand(28,1);
p1=@(x,y) a(1)+a(2)*x+a(3)*y;
p2=@(x,y) a(4)*x.^2+a(5)*x.*y+a(6)*y.^2;
p3=@(x,y) a(7)*x.^3+a(8)*x.^2.*y+a(9)*x.*y.^2+a(10)*y.^3;  
fname=@(x,y) p1(x,y)+p2(x,y)+p3(x,y); %or other function.
%fname=@(x,y) sin(pi*x+pi*pi*y); 
v1=[0 0];v2=[0.25 0];v3=[0,0.25];
d=3;
[x,y]=domain_pts(v1,v2,v3,d); %this code generates the domain points explained in [18]. 
P=[x y]; %P is a list of domain points.
%The rest of the code is to compute int_P f(x,y)dxdy. 
load Hexgond3 %This matlab data file contains the orthogonal polynomials in Taylor format.
P1=@(x,y)  1/2.598076211353319+x-x; %The MATLAB way to define P1 as a function of x.
P2=@(x,y)  X(2,2)*x+X(2,3)*y;
P3=@(x,y)  X(3,2)*x+X(3,3)*y;
P4=@(x,y) X(4,1)+X(4,4)*x.^2+X(4,5)*x.*y+X(4,6)*y.^2;
P5=@(x,y) X(5,1)+X(5,4)*x.^2+X(5,5)*x.*y+X(5,6)*y.^2;
P6=@(x,y) X(6,1)+X(6,4)*x.^2+X(6,5)*x.*y+X(6,6)*y.^2;
P7=@(x,y) X(7,2)*x+X(7,3)*y+X(7,7)*x.^3+X(7,8)*x.^2.*y+X(7,9)*x.*y.^2+X(7,10)*y.^3;
P8=@(x,y) X(8,2)*x+X(8,3)*y+X(8,7)*x.^3+X(8,8)*x.^2.*y+X(8,9)*x.*y.^2+X(8,10)*y.^3;
P9=@(x,y) X(9,2)*x+X(9,3)*y+X(9,7)*x.^3+X(9,8)*x.^2.*y+X(9,9)*x.*y.^2+X(9,10)*y.^3;
P10=@(x,y) X(10,2)*x+X(10,3)*y+X(10,7)*x.^3+X(10,8)*x.^2.*y+X(10,9)*x.*y.^2+X(10,10)*y.^3;
B=[]; b=[];
for i=1:10
x=P(i,1);y=P(i,2); b=[b;fname(x,y)];
A=[P1(x,y),P2(x,y) P3(x,y) P4(x,y) P5(x,y) P6(x,y)];
A=[A P7(x,y) P8(x,y) P9(x,y) P10(x,y)];
B=[B;A];
end
c=B\b; 
disp(`The integration is `)
c(1)
\end{verbatim}
To build up a numerical quadrature formula,  we simply use the first row of the inverse of
$B$ above and obtain the following 
\begin{equation}
\label{inthexagon}
\int_\Omega f(x,y)dxdy \approx \sum_{i=1}^{10} w_i f(x_i,y_i),
\end{equation}
where $w_1, \cdots, w_{10}$ are given below:
$$
4596/29 ,     -18901/97 ,       4053/26 ,      -4053/104  ,   -18901/97 , 0, 0, 
4053/26 , 0,         -4053/104  
$$
and $(x_i,y_i), i=1, \cdots, 10$ are $(0,0), (1/12,0),(1/6,0),(1/4,0),(0,1/12), (1/12,1/12), 
(1/6,1/12),$ \par $(0,1/6),(1/12,1/6),(0,1/4)$.  
Note that there are three zeros for the weights 
and hence, the function values are not all needed.  That is we only need 7 function values to have
the integration to be exact for all cubic polynomials.  
\end{example}

\subsection{Quadrature Formulas with Higher Polynomial Precision}
As seen from Example~\ref{magic1}, we can use a less number of function values to 
make the integration exact for polynomials of degree $d$.  In other words, we
can have a quadrature formula with higher polynomial precision.  Construction of such quadrature formulas
has been studied for many decades. See many quadrature formulae in 2D and cubature formulas in 3D in 
\cite{HS56}, \cite{S71}, \cite{C97}, \cite{HKA12}, \cite{P16},  and etc..  
In this subsection, we present   a few more examples.  
\begin{example}
\label{magic}
Let us give the quadrature formula for degree $2$ which is 
\begin{equation}
\label{quadrature2}
\int_T f(x,y)dxdy \approx \frac{1}{6} (f(1/2,0)+f(1,1/2)+f(1/2,1/2)),
\end{equation}
where $T$ is the triangle with vertices $(0,0), (1,0), (1,1)$.  The formula will be 
exact for all quadratic polynomials although the formula needs only three function 
values.  
\end{example}

\begin{example}
In this example, we recall an elementary quadrature formula over $[0, 1]^2$:
\begin{eqnarray}
\label{sformua1}
\int_{-1}^1\int_{-1}^1 f(x,y) dxdy &\approx &  4f(0,0)
\end{eqnarray}
Note that the formula use only one function value, but will be exact for all linear 
polynomial functions and many even and odd polynomials as discussed before. 
\end{example}

\begin{example}
\label{square}
Let us consider to use orthogonal polynomials of degree $2$ to find quadrature formulas.
There are many of them as we can use different points in $[0, 1]^2$ to find a quadrature 
formula. The following is just one of them. 
\begin{eqnarray}
\label{sformula2}
\int_0^1\int_0^1 f(x,y) dxdy &\approx &  (f(0,0)+f(0,1)+f(1,0)+f(1,1))/12+ 2f(1/2,1/2)/3.
\end{eqnarray}
Although it is constructed based on orthogonal polynomials of degree $2$, the above
quadrature will be exact for all polynomials of degree $3$.  
This formula can be compared with 
a standard quadrature formula based on 15 function values for cubic polynomial precision. 
\end{example}

\begin{example}
\label{quintic}
Next we present the quadrature formula based on our iterative 
orthogonal polynomials of degree $5$ which is 
\begin{eqnarray*}
\int_T f(x,y)dxdy &\approx &\frac{1}{2016} 
(11f(0,0)+25f(1/5,0)+25f(2/5,0)+25f(3/5,0)+25f(4/5,0) + \\
&& 11f(1,0)+25f(1,1/5)+25f(1,2/5)+25f(1,3/5)+25f(1,4/5) +\\
&& 11f(1,1)+25f(4/5,4/5)+25f(3/5,3/5)+25f(2/5,2/5)+25f(1/5,1/5) +\\
&& 200f(2/5,1/5)+25f(3/5,1/5)+200f(4/5,1/5)+25f(4/5,2/5)+\\
&& 200f(4/5,3/5)+25f(3/5,2/5)),  
\end{eqnarray*}
where $T$ is the triangle with vertices $(0,0), (1,0), (1,1)$.  The formula will be 
exact for all quintic polynomials. Note that the coefficients are positive and symmetric 
with respect to $T$.
\end{example}

\section{Remarks}
We have a few remarks in order
\begin{remark}
It is possible to extend the ideas in the paper  to compute orthogonal polynomials with 
respect to a weight $w$, in particular, when $w$ is a polynomial or piecewise polynomials, 
there is a formula which is exact for the following integration:
\begin{equation}
\label{keyformula2}
\int_T w(x,y)B^T_{ijk}(x,y) B^T_{i',j',k'}(x,y) dxdy
\end{equation}
for any $B^T_{ijk}$ and $B^T_{i',j',k'}$ with $i+j+k=d$ and $i'+j'+k'=d$, where the weight function $w(x,y)$ is 
polynomial, e.g. Jacobi weights with integer parameters $W(x,y)=\prod_{i=1}^n(1-x_i)^\alpha(1+x_i)^\beta$
over $[-1,1]^n$ for integers $\alpha$ and $\beta$. More generally, if a weight function is a piecewise polynomial function over a triangulation 
$\triangle$ of polygonal domain $\Omega$ of interest, we can build up $M_\triangle$ 
piece-wisely over each
triangle in $\triangle$. Similarly, there
is such a formula in the 3D setting. With these formula, we are able to compute orthogonal 
polynomials with respect to a weight $w$ when $w$ is a polynomial. When $w$ is not a 
polynomial, 
we can approximate $w$ by a polynomial which leads to approximate orthogonal polynomials. 
We leave the discussion to the interested reader. 
\end{remark}

\begin{remark}
	When a given domain $\Omega$ is not a polygon, e.g. the domain used by 
Koornwinder (cf. \cite{DX14}), we can approximate $\Omega$ by using a polygon with piecewise linear boundary 	and then apply our 
	algorithms to obtain orthonormal polynomials.  
\end{remark}

\begin{remark}
There are some recurrence relationship between the orthogonal polynomials like Legendre 
formula in the univariate setting.  It is interesting to explore the relationships to 
obtain some clean formulas due to the fact that the coefficients of orthogonal 
polynomials are irrational numbers (containing some radical numbers).  We leave the 
detail to the interested reader.  
\end{remark}

\begin{remark}
We can extend the ideas in this paper to construct orthonormal spline basis for a Sobolev 
space $H^1(\Omega)$ for any polygonal domain. That is we combine 
$M_\triangle$ and $S_\triangle$ together, where $S_\triangle$ stands for the stiffness 
matrix over $\triangle$.  We do not know any application of such a basis. 
We leave the detail to the interested reader.  
\end{remark}

\begin{remark}
We can extend the ideas in this paper to construct orthonormal wavelets/framelets over 
any triangulation $\triangle$ of the domain $\Omega$ of interest. This will 
extend the study in \cite{GL13}, where box splines are used.   We leave the detail to the 
interested reader. 
\end{remark}

\begin{remark}
	Certainly, we can extend the computational algorithm to the multi-dimensional setting 
when the dimensionality
	$n\ge 4$. So far we do not have a MATLAB code to generate n-dimensional $M_\triangle$ 
over an $n$-simplex.  We leave the detail 	to the interested reader.  
\end{remark}

\bigskip
\noindent
{\bf Acknowledgment: }
The author would like to thank both reviewers for their comments and several references to 
improve the readability of the paper. 

\medskip
\noindent
{\bf Declarations: }

\noindent {\bf Funding: } The author is supported by the Simons Foundation for collaboration
grant \#864439.

\noindent {\bf Completing Interest: }  
The author declared that he has no conflict of interest.

\end{document}